\documentclass[11pt,a4paper]{amsart}
\usepackage[a4paper]{geometry} 
\usepackage[english]{babel}
\usepackage{microtype}
\usepackage{mathtools}
\usepackage{amssymb,amsthm}
\usepackage{thmtools}
\usepackage{subcaption}
\usepackage{graphicx}
\usepackage[usenames,dvipsnames]{xcolor}
\usepackage[foot]{amsaddr}
\usepackage{etoolbox}
\usepackage{appendix}

\usepackage[
final,
colorlinks,
linkcolor = BrickRed,
citecolor = OliveGreen,
filecolor = black,
urlcolor = BrickRed,
bookmarks=true,
bookmarksnumbered=true,
bookmarksopen=true,
bookmarksopenlevel = 0,
hypertexnames=false,
]{hyperref}
\usepackage{url}

\usepackage{algorithm}
\makeatletter
\def\plist@algorithm{Alg.\space}
\makeatother
\usepackage{algpseudocodex}
\algrenewcommand\algorithmicrequire{\textbf{Input:}}
\algrenewcommand\algorithmicensure{\textbf{Output:}}
\usepackage{csquotes}

\newtheorem{theorem}{Theorem}
\newtheorem{lemma}[theorem]{Lemma}
\newtheorem{corollary}[theorem]{Corollary}
\theoremstyle{definition}
\newtheorem{definition}[theorem]{Definition}
\theoremstyle{remark}
\newtheorem{remark}[theorem]{Remark}
\numberwithin{equation}{section}
\usepackage[capitalise,nameinlink,noabbrev]{cleveref}

\AddToHook{cmd/appendix/before}{\crefalias{section}{appendix}}

\newcommand{\tol}{\mathtt{tol}}
\newcommand{\polylog}{\operatorname{polylog}}
\newcommand{\card}{\operatorname{card}}
\newcommand{\supp}{\operatorname{supp}}
\newcommand{\G}{\mathcal{G}}
\newcommand{\N}{\mathcal{N}}
\newcommand{\cS}{\mathcal{S}}

\newcommand{\Colm}{C_{\mathrm{ol},\ell}}
\newcommand{\nei}{\mathsf{N}}
\newcommand{\s}{\mathfrak{a}}
\newcommand{\uloc}{\varphi}
\newcommand{\qq}{\tilde{q}}

\usepackage[
    backend=biber,
    style=alphabetic,
    sorting=nyt,
    giveninits=true,
    isbn=false,
    eprint=true,
    maxnames=10
]{biblatex}
\usepackage{tikz}
\usetikzlibrary{patterns}
\usepackage{pgfplots}
\usetikzlibrary{positioning, calc, backgrounds}
\usepgfplotslibrary{statistics} 

\definecolor{unia-green}{RGB}{0, 101, 97}
\definecolor{unia-pink}{RGB}{173, 0, 124}
\definecolor{unia-yellow}{RGB}{246, 168, 0}
\definecolor{unia-orange}{RGB}{235, 105, 11}
\definecolor{unia-red}{RGB}{212, 0, 45}
\definecolor{unia-lightblue}{RGB}{0, 174, 207}
\definecolor{unia-blue}{RGB}{0, 135, 193}
\definecolor{unia-lightgreen}{RGB}{72, 147, 36}

\usepackage{booktabs}

\title{Quantum Enhanced Numerical Homogenization}

\author[]{L.~Balazi$^{*}$, M.~Deiml$^{*}$, D.~Peterseim$^{\dagger}$}
\address{${}^{*}$ Institute of Mathematics, University of Augsburg, Universit\"atsstr.~12a, 86159 Augsburg, Germany}
\address{${}^{\dagger}$ Institute of Mathematics \& Centre for Advanced Analytics and Predictive Sciences (CAAPS), University of Augsburg, Universit\"atsstr.~12a, 86159 Augsburg, Germany}
\email{\{loic.balazi, matthias.deiml, daniel.peterseim\}@uni-a.de}

\thanks{Funded by the Deutsche Forschungsgemeinschaft (DFG, German Research Foundation) -- 571768116.}
\date{\today}

\begin{document}

\begin{abstract}
We propose a numerical homogenization method for scalar linear partial differential equations with rough coefficients that integrates classical coarse-scale solvers with quantum subroutines for fine-scale corrections. Inspired by the Localized Orthogonal Decomposition, we employ quantum local problem solvers to capture fine-scale features efficiently. Unlike periodic homogenization approaches, it does not rely on any periodicity assumption. Moreover, the coupling between quantum computation and the coarse model requires only selected measurements of quantum representative volume elements, thereby mitigating the quantum-interface information bottleneck that could otherwise negate a potential speed-up. We show that the local quantum solver can achieve solutions with the required level of accuracy with an operation count that scales only logarithmically with the fine-scale resolution, as determined by the smallest length scale encoded in the diffusion coefficient. The potential of the approach is illustrated through two-dimensional numerical experiments, using a classical simulation of the local quantum solver.
\end{abstract}

\maketitle

%=========  Key words and AMS subject classifications
{\tiny {\bf Key words.} Numerical Homogenization, Quantum Computing, Localized Orthogonal Decomposition}\\
\indent
{\tiny {\bf AMS subject classifications.} 
{\bf 65N30}, %Finite element, Rayleigh-Ritz and Galerkin methods for boundary value problems involving PDEs
{\bf 68Q12}, %Quantum algorithms and complexity in the theory of computing
{\bf 35J15} %Second-order elliptic equations
}

\section{Introduction}
Consider a prototypical scalar linear elliptic problem with a rough, highly oscillatory diffusion coefficient, possibly including lower-order terms, in a domain $\Omega \subset \mathbb{R}^d$,
\begin{equation*}
 -{\rm div} (\mathfrak{A}_\varepsilon \nabla u) + \mathfrak{b} \cdot \nabla u + \mathfrak{c} u =  f,
\end{equation*}
where the diffusion coefficient $\mathfrak{A}_\varepsilon$ encodes microscopic features, 
with $\varepsilon$ denoting a characteristic length scale or, more generally, the smallest scale in a continuum of scales. It is well known that achieving accurate solutions requires sufficiently fine spatial discretizations, leading to linear systems of size $\mathcal{O}(\varepsilon^{-d})$. For small $\varepsilon$, the problem quickly becomes intractable, as the computational demands exceed available resources. In this context, multi-scale methods or numerical homogenization (see~\cite{Altmann21} for a detailed overview), including the (Generalized) Multi-scale Finite Element Method~(MsFEM)~\cite{Hou97,Efendiev09,Efendiev13,Blanc23,Chung23}, the Localized Orthogonal Decomposition (LOD)~\cite{Malqvist14,Henning14,Malqvist21}, Operator-Adapted Wavelets~\cite{Owhadi_Scovel_2019}, Fast Fourier Transform (FFT)-based homogenization methods~\cite{Moulinec1994,Moulinec1998,Schneider21}, among many others, have been developed in the literature. A common feature of these approaches is that under-resolved fine-scale information is recovered through the solution of local cell, corrector, or RVE-type problems. Yet these local fine-scale computations can themselves become a computational bottleneck, particularly in the absence of periodicity or in parameter-dependent settings that require the evaluation of families of local corrector functions. Recently, several works have proposed data-driven approaches to overcome these limitations~\cite{Kropfl22,Han23,Stepanov23,Strom25}, while emerging quantum computing paradigms offer a promising new direction for addressing such problems~\cite{Ruane2025QuantumIndex}. 

Quantum computing offers significant opportunities for computational mechanics, although practical applications in this field remain at an early stage. Existing works have explored quantum approaches for solving the Poisson problem~\cite{Cao13,Childs21,Vazquez22,Deiml25}, non-stationary transient problems such as the wave equation~\cite{Costa19}, and the heat equation \cite{Linden22}. For an overview of early developments in quantum algorithms for partial differential equations (PDEs) in structural mechanics, we refer to~\cite{Tosti22}. Complementing these developments, the feasibility of implementing domain decomposition preconditioners within a quantum framework has recently been demonstrated in~\cite{fressart26}.  Also, in recent work, Schr\"odingerization has been proposed as a general framework for quantum simulation of linear ordinary and partial differential equations, by transforming the underlying equations into Schr\"odinger-type systems through a warped phase transformation~\cite{Jin23,Jin24}, with applications to multiscale PDEs also being investigated~\cite{Hu24}. Lately, quantum-based homogenization methods in computational mechanics have started to be investigated, motivated by the need to reduce the computational complexity of the RVE computations. For example,~\cite{Liu24,Wang26} present a quantum computing framework for solving RVE problems in computational homogenization by combining conventional algorithms, including fixed-point iteration and the Fast Fourier Transform, reformulated into their quantum counterparts. The authors show that a quantum state representing the solution can be prepared in a number of operations that is polylogarithmic in the number of degrees of freedom. However, since the method relies on Fourier-based formulations, it is inherently restricted to periodic homogenization problems.
A recent work~\cite{Xu26} proposes using quantum computing to solve RVE problems obtained from classical homogenization. The method formulates the local problem on a periodic RVE, leading to a linear system $\mathbf{K}u=\mathbf{f}$ derived from finite element discretization with periodic boundary conditions. The approach then extracts quantities of interest from the quantum state to approximate the homogenized coefficient.

In this paper, we propose a hybrid multiscale approach that exploits quantum speed-ups for the most expensive local sub-tasks while retaining a classical coarse-scale formulation for the global problem. A central motivation is to go beyond existing quantum-based computational homogenization approaches, which are primarily based on quantum formulations of periodic RVE problems. In contrast, our approach does not require periodicity and is applicable to non-periodic, heterogeneous, and potentially stochastic settings.
A further motivation comes from a common perspective in computational mechanics: one is often not interested in the full fine-scale solution field itself, but rather in macroscopic quantities of interest derived from it, such as compliance in structural mechanics, heat fluxes through interfaces in heat transfer, or lift and drag in fluid mechanics. This viewpoint is particularly natural for quantum solvers, where the PDE solution is encoded as a quantum state, and the full reconstruction of the fine-scale solution may require a prohibitive number of measurements.

Inspired by the Localized Orthogonal Decomposition (LOD) method, our hybrid strategy replaces the classical solution of localized fine-scale corrector problems with quantum local solvers. The coupling of these quantum computations with the classical coarse model requires only selected measurements of the quantum local solutions. Crucially, the number of measurements required for each local problem is relatively small and independent of the fine and coarse discretization sizes. This avoids the quantum-interface information bottleneck that could otherwise eliminate the potential speed-up. In addition, the proposed formulation avoids another common difficulty in quantum PDE solvers: the encoding of arbitrary right-hand sides as quantum states. In our setting, each local problem has a prescribed source term determined by the coarse basis functions and the local coefficient. These source terms are explicit and provably encodable, thereby avoiding the need to encode general right-hand sides, which is known to be difficult in general~\cite[Appendix A]{Deiml25}.

In our approach, the finite element discretization of the local problems leads to linear systems of the form $\mathbf{K}_{\rm loc} u = \mathbf{g}$, where $\mathbf{g}$ is a simple right-hand side that is independent of the global source term. The solution of such systems by quantum algorithms can build on recent advances in quantum linear system methods; see, e.g.,~\cite{Harrow09,Ambainis12,Childs17,Gilyen19,Subasi19,Lin20,An22,Lin22,Low24}. In this work, we solve the local problems using the preconditioned approach developed in \cite{Deiml25}, combined with a discretization that yields a computational complexity that is logarithmic in the local discretization level and depends only linearly on the inverse measurement tolerance. It should be emphasized that a naive implementation would increase the computational complexity by a factor proportional to the condition number of $\mathbf{K}_{\rm loc}$.

The role of \cite{Deiml25} in the present work should be distinguished from a direct quantum solution of the full PDE. Indeed, the method of \cite{Deiml25} could, in principle, be applied directly to the full problem, with a similar dependence on the fine-scale resolution and without structural assumptions on the coefficients, apart from encodability. Our hybrid approach instead uses the quantum solver only in an offline stage to compute selected local quantities. These quantities are then transferred to a classical coarse-scale model, which can be reused in the online stage to obtain coarse-mesh approximations for arbitrary coarse-scale right-hand sides at a cost essentially independent of the fine-scale resolution and without further calls to the quantum subroutines.
\medskip

After introducing the model problem in \cref{sec.Model}, we present the proposed approach in~\cref{sec.method}. We then describe the discretization strategy and the quantum solution of the local problems in \cref{sec.Quantum}. In \cref{sec.Sens}, we perform a sensitivity analysis to assess the impact of stochastic measurement errors on the overall simulation. Finally, in \cref{sec.num}, we evaluate this approach through two-dimensional numerical experiments for both periodic and non-periodic highly oscillatory diffusion coefficients, using a classical simulation of the quantum solver, and discuss the results.

\section{Model problem}
\label{sec.Model}
Let $\Omega\subset \mathbb R^d$, for $d\in\{1,2,3\}$, be a bounded domain with  Lipschitz boundary $\partial\Omega$. We consider the following problem
\begin{equation}
\begin{aligned}
 -\nabla\cdot(\mathfrak{A} \nabla u) + \mathfrak{b} \cdot \nabla u + \mathfrak{c} u &&=&& & f &&\text{in } \Omega, \\
 u && = && & 0 && \text{on } \partial \Omega,
\end{aligned}
\label{eq:Problem}
\end{equation}
where $f$ is an external force, $\mathfrak{c} \in L^\infty (\Omega)$, $\mathfrak{b} \in L^\infty(\Omega;\mathbb{R}^d)$ with  $\mathrm{div}(\mathfrak{b}) \in L^\infty (\Omega)$,
and ${\mathfrak{A} \in L^\infty (\Omega; \mathbb{R}^{d\times d})}$ with uniform spectral bounds (uniformly elliptic), i.e., 
	\begin{equation*}
	\left\{
	\begin{aligned}
	0<\theta&:= \mathrm{ess}\inf \limits_{x\in \Omega} \inf \limits_{z\in \mathbb{R}^d\setminus \{0\}}\frac{z^\top\mathfrak{A}(x)z}{z\cdot z}\\
	&\leq \mathrm{ess}\sup \limits_{x\in \Omega} \sup \limits_{z\in \mathbb{R}^d\setminus \{0\}}\frac{z^\top\mathfrak{A}(x)z}{z\cdot z}=:\Theta <\infty.
	\end{aligned}
	\qquad\right.  
%\label{eq:UnifSpectBound}
	\end{equation*}
We further assume that the lower-order coefficients satisfy
\begin{equation*}
-\frac{1}{2}\mathrm{div}(\mathfrak{b})+\mathfrak{c}\geq 0
\quad\text{a.e. in }\Omega .
%\label{eq:lower_order_assumption}
\end{equation*}
For a domain $\omega \subseteq \Omega$, we denote by $(\bullet,\bullet)_\omega$ the usual scalar product in $L^2(\omega)$, and consider $(\bullet,\bullet):=(\bullet,\bullet)_\Omega$. Let us define the space $V(\omega):=\{v \in H^1(\Omega) \ {\rm such \ that} \ {\rm supp}(v) \subseteq \omega\}$ equipped with the classical $H^1$ semi-norm, defined as  $\lVert \, \bullet \, \rVert_ {V, \omega} := \sqrt{ \int_\omega \lvert \nabla \bullet \rvert^2 \mathrm{d}x}$. For the ease of notation, we denote $V:= V(\Omega) =  H^1_0(\Omega)$ and  $\lVert \,  \bullet \,  \rVert_ {V} := \lVert \,  \bullet \,  \rVert_ {V,\Omega} $. We denote by $\langle \bullet, \bullet \rangle_{V'\times V}$ the duality pairing between $V$ and its dual space $V':=H^{-1}(\Omega)$. Let us define the bilinear form $a: V \times V \to \mathbb{R}$ as
\begin{equation*}
a(v,w):= (\mathfrak{A}\nabla v,\nabla w)
        +(\mathfrak{b}\cdot\nabla v,w)
        +(\mathfrak{c} v,w),
\end{equation*}
and let us consider $F \in V'$. Then, a weak formulation of the problem \eqref{eq:Problem} reads as follows. Seek $u\in V$ such that, for all $v \in V$,
\begin{equation}\label{e:ProblemWeak}
a(u,v) = F(v).
\end{equation}
Under the assumptions above, the bilinear form $a$ is coercive and continuous on $V$. In particular, there exist constants $\alpha,\beta>0$ such that
\begin{equation}
\alpha \lVert v \rVert_V^2 \leq a(v,v)
\quad\text{for all } v\in V,
\qquad
|a(v,w)| \leq \beta \lVert v \rVert_V \lVert w \rVert_V
\quad\text{for all } v,w\in V.
\label{eq:abound}
\end{equation}
Consequently, the Lax--Milgram theorem implies that the variational problem~\eqref{e:ProblemWeak} is well posed; that is, it admits a unique solution $u \in V$ depending continuously on the data.

\section{Definition of the method}
\label{sec.method}
In this section, we briefly recall the main idea of the Localized Orthogonal Decomposition (LOD) method, which has been extensively studied; see, e.g.,~\cite{Henning15,Malqvist15,Gallistl15,Gallist17,Altmann21}. The aim is not to provide a detailed analysis of the method, but rather to review the underlying concepts and explain why the LOD framework is particularly natural for the hybrid quantum--classical approach developed in this work.

\subsection{Meshes and data structures}
Let $\G_H$ be a regular partition of $\Omega$ into closed intervals, parallelograms, and parallelepipeds for $d=1,2,3$, respectively, such that $\bigcup_{T\in\G_H}T =\overline\Omega$ and any two distinct $T,T'\in\G_H$ are either disjoint or intersect in a common vertex, edge, or face, depending on the dimension. We impose shape-regularity in the sense that the aspect ratios of the elements in $\G_H$ are uniformly bounded. Since we consider quadrilaterals, respectively hexahedra, with parallel faces, this guarantees the non-degeneracy of the elements in $\G_H$. The global mesh size is defined by
$H:=\max\{\operatorname{diam}(T) \ {\rm for} \ T\in\G_H\}$. The method described in this section carries over to simplicial triangulations, to more general quadrilateral or hexahedral partitions satisfying suitable non-degeneracy conditions, and even to meshless methods based on suitable partitions of unity \cite{Henning15}.

Given any subdomain $\omega\subseteq\overline\Omega$, define its neighborhood via
\begin{equation*}
\nei(\omega):=\operatorname{int}
          \big(\cup\{T\in\G_H \ {\rm such \ that} \ T\cap\overline{\omega}\neq\emptyset  \}\big).
\end{equation*}
Furthermore, we define, for any $\ell\geq 2$, the patches
\begin{equation}
\nei^1(\omega):=\nei(\omega)
\qquad\text{and}\qquad
\nei^\ell(\omega):=\nei(\nei^{\ell-1}(\omega)) .
\label{eq:Patch}
\end{equation}
The shape-regularity of the mesh implies that there is a uniform bound 
$\Colm$ on the number of elements in the $\ell$th-order patch,
\begin{equation*}
\max_{T\in\G_H}\card\{ K\in\G_H\ {\rm such \ that} \ K\subseteq \overline{\nei^\ell(T)}\}
\leq \Colm.
\end{equation*}
Throughout this paper, we assume that the coarse-scale mesh $\G_H$
is quasi-uniform. This implies that $\Colm$ depends polynomially
on $\ell$. Also, we write $a \lesssim b$ if there exists a generic constant $C > 0$, independent of the mesh size and $\ell$, such that $a \leq C b$, and we write $a \approx b$ if $a \lesssim b$ and $b \lesssim a$.

\subsection{Quantities of interest and fine-scale space}
The LOD method is based on splitting the solution $u$ into a coarse scale contribution and a fine-scale part. We tackle this task from a more abstract perspective and define a number of macroscopic quantities of interest that extract the desired information from the exact solution. These linear functionals are denoted by $q_j \in V'$, $j \in \mathcal{J}$, for some finite index set $\mathcal{J}$ of size $N :=\lvert \mathcal{J} \rvert$. We assume that these quantities are linearly independent. The idea of LOD is to construct a discrete method that approximates the quantities $q_j(u)$ as closely as possible. Several examples of macroscopic quantities of interest $q_j$, including local integral means of functions over suitable entities of some finite element mesh or local averaging, are discussed in \cite{Altmann21}.

Given the macroscopic quantities of interest, we define the fine-scale space as 
\begin{equation*}
    W:= \{ v \in V \ {\rm such \ that} \ \forall j \in \mathcal{J}, \  q_j(v)=0 \} = \bigcap_{j \in \mathcal{J}} \ker(q_j).
\end{equation*}
The space $W$ is closed since it is the intersection of finitely many kernels of continuous linear functionals. We now define the fine-scale projection operator, also called the corrector, $\mathcal{C}^*:V \to W$ by
\begin{equation}
    a(w, \mathcal{C}^* v) = a(w,v)\quad \text{for all } w \in W,\ v \in V.\label{eq:c1}
\end{equation}
Equivalently, 
\begin{equation*}
    a(w, (1-\mathcal{C}^*) v) = 0\quad \text{for all } w \in W,\ v \in V.
\end{equation*}
For $v\in W$, the definition implies $\mathcal{C}^*v=v$. Hence, $\mathcal{C}^*$ is a projection onto $W$ and
\begin{equation*}
    W=\operatorname{range}(\mathcal{C}^*)=\ker(1-\mathcal{C}^*).
\end{equation*}
Based on the kernel space $W$ and the projection $\mathcal{C}^*$, we construct the (finite-dimensional) test space 
\begin{equation*}
 \widetilde{V}_H :=(1 - \mathcal{C}^*)V.
\end{equation*}
Note that an analogous fine-scale projection $\mathcal{C}:V \to W$ associated with the primal problem is defined by
\begin{equation}
    a( \mathcal{C} v, w) = a(v, w)\quad \text{for all } v \in V,\ w \in W.
\label{eq:Cprimal}
\end{equation}
The tilde indicates that oscillatory fine-scale features have been incorporated into the coarse space. The space $\widetilde{V}_H$ has dimension $N$ and gives rise to a stable decomposition of $V$.
\begin{theorem}[$a$-\enquote{orthogonal} decomposition, Theorem 3.5 \cite{Altmann21}]
Let the quantities of interest $q_j \in V'$, $j \in \mathcal{J}$, be pairwise linearly independent. Under the assumption of well-posedness of~\eqref{eq:c1}, the space $\widetilde{V}_H$ has dimension $N=\lvert \mathcal{J} \rvert$ and defines a conforming decomposition of the overall space, namely
    \begin{equation*}
        V = \widetilde{V}_H \oplus W.
    \end{equation*}
Furthermore, we have the \enquote{orthogonality} relation
\begin{equation*}
    a(W, \widetilde{V}_H)=0. 
\end{equation*}
\label{thm:decomposition}
\end{theorem}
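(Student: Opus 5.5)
The plan is to read off both the orthogonality relation and the direct sum from the projection property of $\mathcal{C}^*$, and then to get the dimension count from the linear independence of the $q_j$. I will use that $\mathcal{C}^*$ is a well-defined, bounded, linear projection onto $W$. This follows from the assumed well-posedness of \eqref{eq:c1}: coercivity \eqref{eq:abound} restricted to the closed subspace $W$ together with Lax--Milgram gives it, and the paper has already noted $W=\operatorname{range}(\mathcal{C}^*)=\ker(1-\mathcal{C}^*)$.

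\textbf{Orthogonality.} Any $\tilde v\in\widetilde V_H$ has the form $\tilde v=(1-\mathcal{C}^*)v$ with $v\in V$. The equivalent form of \eqref{eq:c1} gives $a(w,(1-\mathcal{C}^*)v)=0$ for all $w\in W$, which is exactly $a(W,\widetilde V_H)=0$.

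\textbf{Direct sum.} For $v\in V$, write $v=(1-\mathcal{C}^*)v+\mathcal{C}^*v\in\widetilde V_H+W$. For the intersection, let $z\in\widetilde V_H\cap W$. Then $z\in W$ and, by orthogonality, $a(z,z)=0$. Coercivity yields $z=0$. This step needs coercivity of $a$ on $W$, which \eqref{eq:abound} provides. Under only an inf-sup assumption I would instead argue via the projection: $z=(1-\mathcal{C}^*)v$ implies $\mathcal{C}^*z=\mathcal{C}^*v-\mathcal{C}^{*2}v=0$, while $z\in W$ implies $\mathcal{C}^*z=z$, so again $z=0$. I prefer this second argument since it uses only idempotency.

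\textbf{Dimension.} This is the main step. By the direct sum, $\widetilde V_H\cong V/W$. Consider the map $Q:V\to\mathbb{R}^N$, $Q v=(q_j(v))_{j\in\mathcal J}$, whose kernel is exactly $W$. Hence $V/W\cong\operatorname{range}(Q)$. It remains to show that $Q$ is surjective. Suppose the range were a proper subspace. Then there would exist a nonzero $\lambda\in\mathbb{R}^N$ orthogonal to it, so $\sum_j\lambda_j q_j(v)=0$ for all $v$, i.e.\ $\sum_j\lambda_jq_j=0$ in $V'$. This contradicts linear independence of the $q_j$. Note that the statement says \enquote{pairwise}, but the paper's standing assumption is full linear independence, and that is what I will use. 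Therefore $\dim\widetilde V_H=\dim V/W=N$. The only subtle point is keeping the algebraic quotient isomorphism separate from any topological considerations. Since all maps involved are linear and the codimension is finite, no closedness issues arise beyond $W$ being closed, which the paper has already noted.
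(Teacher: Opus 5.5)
Your proof is correct and complete. The paper gives no proof of its own but defers to \cite[Theorem~3.5]{Altmann21}, and your route is the standard one: orthogonality straight from \eqref{eq:c1}, the direct sum from the idempotency of $\mathcal{C}^*$ recorded just before the theorem, and $\dim\widetilde V_H=\dim V/W=N$ via surjectivity of $v\mapsto(q_j(v))_{j\in\mathcal J}$. Your caveat about the hypothesis is also justified and not merely cosmetic: for $q_3=q_1+q_2$ with $q_1,q_2$ independent the family is pairwise linearly independent, yet $\dim\widetilde V_H=2\neq 3$, so the dimension claim really needs the paper's standing assumption of full linear independence.
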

In \cref{thm:decomposition}, the word orthogonal is in quotes since the bilinear form $a$ does not define a scalar product. Below, we propose two other characterizations of the coarse space $\widetilde{V}_H$. The first one is
\begin{equation*}
    \widetilde{V}_H = \{ v \in V \ \text{such that, for all} \ w \in W, \ a (w, v) =0 \}. 
\end{equation*}
For the other characterization, let $\mathcal{L}^*: V \to V'$ be the linear operator associated with the bilinear form $a$ by 
\begin{equation}
  \langle \mathcal{L}^*v, w \rangle_{V' \times V} :=a(w,v) \quad {\rm for \ all}\  v, w \in V.
\label{eq:L}
\end{equation}
The boundedness of $a$ guarantees that $\mathcal{L}^*$ is well defined, and its invertibility follows from the well-posedness of the adjoint problem. We denote by $\mathcal{L}^{-*}:=(\mathcal{L}^*)^{-1}$ its inverse. Then, a less obvious characterization of the discrete space $\widetilde{V}_H$, which will be important below, is given by 
\begin{equation*}
    \widetilde{V}_H = {\rm span}\{ \mathcal{L}^{-*} q_j, \ j \in \mathcal{J} \}.
\end{equation*}
To see this, first note that both spaces have the same dimension $N$. 
Moreover, a function $u_j=\mathcal{L}^{-*} q_j$ satisfies, for all $w \in W$,
\begin{equation*}
    a(w,u_j) = a(w, \mathcal{L}^{-*} q_j) = q_j(w) =0,
\end{equation*}
which implies that
\[
{\rm span}\{\mathcal{L}^{-*} q_j, \ j \in \mathcal{J}\}
\subseteq \widetilde{V}_H .
\]
The equality then follows from the dimension argument. Note, however, that the basis functions $\mathcal{L}^{-*} q_j$ generally have global support and do not possess computationally useful decay properties for arbitrary macroscopic quantities of interest. For the implementation, the key step in numerical homogenization is therefore to replace these global basis functions with localized approximations. This localization procedure will be discussed in the following sections.

\subsection{Characterization of the coarse-scale space}
As mentioned earlier, the construction of a local basis of the multi-scale space $\widetilde{V}_H$ is crucial. The construction is based on a discrete subspace $V_H \subset V$ of dimension $N$ which has a local basis and satisfies $(1-\mathcal{C}^*)V_H = \widetilde{V}_H$. It turns out that the complementary correctors, namely~$(1 - \mathcal{C}^*)$, are quasi-local, so that a local basis of $V_H$ induces a local approximation basis of $\widetilde{V}_H$. Based on the partition $\mathcal{G}_H$, we define subsets on which the quantities of interest act. For this, we set $\omega^\prime_j$ as an element patch such that $q_j(v) = 0$ for all $v \in V$ with $\supp(v) \subseteq \Omega \setminus \omega^\prime_j$ and which is minimal with respect to the number of elements. In what follows, we assume that there is some (small) uniform parameter $r^\prime \in \mathbb{N}$, such that for each $j \in \mathcal{J}$, we can find a function~$\phi_j\in V$ with 

\begin{subequations}
\begin{align}
    \omega_j := \supp(\phi_j) &\subseteq \nei^{r^\prime}(\omega^\prime_j), \label{eq:Da} \\ 
    q_j(\phi_k) &= \delta_{kj} \quad {\rm for \ all} \ j,k \in \mathcal{J}, \label{eq:Db}
\end{align}%
\label{eq:D}%
\end{subequations}%
where $\delta$ is the Kronecker symbol. This means, in particular, that we assume the functions $\phi_j$ to be local in the sense that $\supp(\phi_j)$ is not much larger than $\omega^\prime_j$. Without loss of generality, we assume that $\omega_j$ is an element patch. In some sense, the functions $\phi_j$ are local $V$-conforming representatives of the quantities of interest~$q_j$. Now, let us define 
\begin{equation*}
    V_H := {\rm span} \{\phi_j, \ j \in \mathcal{J} \}, 
\end{equation*}
and 
\begin{equation*}
    \tilde{\phi}_j := (1- \mathcal{C}^*)  \phi_j \in \widetilde{V}_H, \ j \in \mathcal{J}. 
\end{equation*}
Then, it can be shown that $\tilde{\phi}_j$ form a basis of $\widetilde{V}_H$.  Also, the function $(1- \mathcal{C}^*) \phi_j$ does not depend on the
particular choice of the function $\phi_j$  as long as the properties \eqref{eq:D} are satisfied. Indeed, to see this, we characterize these functions in the form of a saddle point problem. For $j \in \mathcal{J}$, seek $\tilde{\phi}_j \in V$ and $\mu =[\mu_0, \dotsc, \mu_{N-1}]^\top \in \mathbb{R}^N$ such that
\begin{equation}
    \begin{array}{rll}
     \displaystyle  a(v,\tilde{\phi}_j) + \sum_{k\in \mathcal{J}} \mu_k q_k(v) &=0  & \forall v \in V  \\
       \displaystyle    q_k(\tilde{\phi}_j) &= q_k(\phi_j) = \delta_{kj} & \forall k \in \mathcal{J}.
    \end{array}
\label{eq:saddle1}
\end{equation}
This problem is well-posed due to condition \eqref{eq:Db}, the finite number of quantities of interest, and the inf-sup stability of $a$ on $W$ (the kernel of the constraints). Evidently, the unique solution~$(\tilde{\phi}_j, \mu)$ is characterized independently of $\phi_j$. Nonetheless, we show that $\tilde{\phi}_j = (1 - \mathcal{C}^*) \phi_j$ by proving that $( (1-\mathcal{C}^*)\phi_j,\mu)$ solves the saddle point problem \eqref{eq:saddle1} for some $\mu \in \mathbb{R}^N$. It holds that 
\begin{equation*}
    q_k((1-\mathcal{C}^*)\phi_j) = q_k(\phi_j) = \delta_{kj},
\end{equation*}
and $a(w, (1-\mathcal{C}^*)\phi_j)=0$ for all $w \in W$. 
Finally, we are able to characterize the Lagrange multipliers $\mu$ with the help of the remaining test functions $v \in V$ which are complementary to~$W$. 

\begin{lemma}[Characterization of the coarse space, Lemma 3.12 \cite{Altmann21}]
Let the corrector $\mathcal{C}^*$ be well defined. Given \eqref{eq:D}, the functions $\phi_j$, for $j\in \mathcal{J}$, are linearly independent such that the dimension of $V_H$ is equal to $N$. Furthermore, we have 
\begin{equation*}
    (1-\mathcal{C}^*) V_H = \widetilde{V}_H,
\end{equation*}
that is $(1-\mathcal{C}^*)$ is a bijection mapping from $V_H$ to $\widetilde{V}_H$.
\label{lemma:CS}
\end{lemma}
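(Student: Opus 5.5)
The plan is to use the biorthogonality condition \eqref{eq:Db} twice: once for linear independence of the $\phi_j$, and once for injectivity of $1-\mathcal{C}^*$ on $V_H$. The dimension count from \cref{thm:decomposition} then gives surjectivity onto $\widetilde{V}_H$.

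\textbf{Linear independence.} Suppose $\sum_{k\in\mathcal{J}} c_k \phi_k = 0$. Applying $q_j$ and using \eqref{eq:Db} gives
\[
0=\sum_k c_k q_j(\phi_k)=c_j
\]
for every $j$. Hence $\dim V_H = N$.

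\textbf{Key identity.} For every $v\in V$ we have $q_j(\mathcal{C}^* v)=0$, because $\mathcal{C}^*v\in W=\bigcap_j\ker q_j$. Therefore
\[
q_j\big((1-\mathcal{C}^*)v\big)=q_j(v)\qquad\text{for all } j\in\mathcal{J},\ v\in V.
\]
The map $1-\mathcal{C}^*$ sends $V$ into $\widetilde{V}_H$ by definition, so it restricts to a linear map $V_H\to\widetilde{V}_H$.

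\textbf{Injectivity.} Let $v=\sum_k c_k\phi_k\in V_H$ with $(1-\mathcal{C}^*)v=0$. By the key identity and \eqref{eq:Db},
\[
0=q_j\big((1-\mathcal{C}^*)v\big)=q_j(v)=c_j
\]
for all $j$, so $v=0$. Equivalently, $V_H\cap W=\{0\}$: any $v\in V_H$ with all $q_j(v)=0$ vanishes.

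\textbf{Surjectivity.} By \cref{thm:decomposition}, $\dim\widetilde{V}_H=N$, which matches $\dim V_H=N$. An injective linear map between spaces of equal finite dimension is bijective, so $(1-\mathcal{C}^*)V_H=\widetilde{V}_H$.

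This route needs only the well-definedness of $\mathcal{C}^*$ (to know $W=\operatorname{range}\mathcal{C}^*$) and the dimension statement, both taken from earlier results. I see no real obstacle.

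The only delicate point is the dimension of $\widetilde{V}_H$. If one prefers not to rely on \cref{thm:decomposition} for it, surjectivity can be shown directly. Given $\tilde v\in\widetilde{V}_H$, set
\[
v:=\sum_j q_j(\tilde v)\,\phi_j .
\]
Then $\tilde v - v\in W$, since all $q_j$ vanish on it. Because $1-\mathcal{C}^*$ annihilates $W$,
\[
(1-\mathcal{C}^*)v=(1-\mathcal{C}^*)\tilde v .
\]
Finally, $(1-\mathcal{C}^*)\tilde v=\tilde v$, because $\tilde v=(1-\mathcal{C}^*)u$ for some $u\in V$ and $\mathcal{C}^*$ is a projection, so $1-\mathcal{C}^*$ is idempotent.
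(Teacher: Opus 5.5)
Your proof is correct and follows essentially the paper's own reasoning: the paper cites \cite{Altmann21} for the lemma, but the discussion right after it uses exactly your ingredients. There, $P_H v=\sum_j q_j(v)\phi_j$ is a left inverse of $(1-\mathcal{C}^*)$ on $V_H$ because $\mathcal{C}^*v_H\in W=\ker P_H$ (your injectivity step), and $(1-\mathcal{C}^*)v=(1-\mathcal{C}^*)P_Hv$ for all $v\in V$ (your direct surjectivity argument with $v=P_H\tilde v$, which also yields $\dim\widetilde V_H=N$ without invoking \cref{thm:decomposition}).
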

\Cref{lemma:CS} motivates the definition of a projection operator $P_H:V \to V_H$ using the linearly independent functions $\phi_j$, as
\begin{equation*}
    P_H v = \sum_{j \in \mathcal{J}} q_j(v) \phi_j.
\end{equation*}
The operator $P_H$ acts as the inverse of
$(1-\mathcal{C}^*) : V_H \to \widetilde{V}_H$ on $\widetilde{V}_H$. Indeed, for all $v_H\in V_H$,
\begin{equation*}
    P_H(1-\mathcal{C}^*)v_H = P_H v_H = v_H,
\end{equation*}
since $\mathcal{C}^*v_H\in W=\ker P_H$.
We can consequently conclude that $\ker P_H = W$, and thus $(1-P_H)v \in W$ for any $v \in V$. Since we know that $\mathcal{C}^*$ is a projection onto $W$, we have $(1-\mathcal{C}^*)(1-P_H)v=0$. This implies the important property
\begin{equation*}
    (1 - \mathcal{C}^*)v =  (1 - \mathcal{C}^*)P_Hv
\end{equation*}
for any $v\in V$.
This property implies that $V$ and $V_H$ induce the same coarse space. In other words, the functions $\tilde{\phi}_j:=(1-\mathcal{C}^*)\phi_j$ indeed provide a basis of $\widetilde{V}_H$. 

Now, we can introduce the ideal numerical homogenization (with a Petrov-Galerkin formulation)  which reads as follows. Seek $u_H \in V_H$ such that, for all  $v_H\in V_H$, 

\begin{equation}
    a(u_H, (1-\mathcal{C}^*) v_H) = F((1-\mathcal{C}^*) v_H).
\label{eq:discrete}
\end{equation} 
We call this formulation the ideal discrete problem, as it satisfies the following desirable approximation property, namely the preservation of the quantities of interest of the exact solution.

\begin{lemma}
The discrete solution to \eqref{eq:discrete} preserves the quantities of interest of the exact solution to \eqref{e:ProblemWeak}; that is,
\begin{equation*}
    q_j(u) = q_j(u_H).
\end{equation*}
\label{lemma:preservation}
\end{lemma}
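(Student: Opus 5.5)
The plan is to compare $u$ and $u_H$ through the fine-scale projection. Since $q_j(u_H)=q_j(u)$ for all $j$ means exactly $u-u_H\in W$, it suffices to show that the error $e:=u-u_H$ lies in $W=\bigcap_j\ker q_j$.

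\textbf{Galerkin orthogonality.} Subtracting \eqref{eq:discrete} from \eqref{e:ProblemWeak}, tested with $v=(1-\mathcal{C}^*)v_H$, gives
\begin{equation*}
a\bigl(u-u_H,(1-\mathcal{C}^*)v_H\bigr)=0\quad\text{for all } v_H\in V_H .
\end{equation*}
By \cref{lemma:CS}, $(1-\mathcal{C}^*)V_H=\widetilde{V}_H$, so this reads $a(e,\tilde v)=0$ for all $\tilde v\in\widetilde{V}_H$.

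\textbf{Showing $e\in W$.} I would use the characterization $\widetilde{V}_H=\operatorname{span}\{\mathcal{L}^{-*}q_j,\ j\in\mathcal{J}\}$. Testing with $\tilde v=\mathcal{L}^{-*}q_j$ and using the definition \eqref{eq:L} of $\mathcal{L}^*$,
\begin{equation*}
0=a(e,\mathcal{L}^{-*}q_j)=\langle \mathcal{L}^*\mathcal{L}^{-*}q_j, e\rangle_{V'\times V}=q_j(e)
\end{equation*}
for every $j\in\mathcal{J}$. Hence $e\in W$, that is, $q_j(u)=q_j(u_H)$.

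\textbf{Main obstacle.} The only subtle point is the order of arguments in the nonsymmetric form $a$. The test functions $\mathcal{L}^{-*}q_j$ belong to the adjoint operator, and $\mathcal{L}^*$ is defined with $v$ in the second slot. The error $e$ must therefore occupy the first slot, which matches the Petrov--Galerkin structure of \eqref{eq:discrete}.

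One should also note that the argument uses only the existence of $u_H$, not its uniqueness; well-posedness of \eqref{eq:discrete} is implicitly assumed in the statement. An equivalent route avoids $\mathcal{L}^{-*}$: the orthogonality says $e$ is $a$-orthogonal to $\widetilde{V}_H$. Splitting $e=\tilde e+w$ with $\tilde e\in\widetilde{V}_H$ and $w\in W$ by \cref{thm:decomposition}, one would need $a(\tilde e,\tilde v)=0$ for all $\tilde v\in\widetilde{V}_H$ to force $\tilde e=0$. That requires inf-sup stability on $\widetilde{V}_H$, which is why the $\mathcal{L}^{-*}q_j$ characterization is the cleaner choice.
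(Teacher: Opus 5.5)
Your proof is correct, but it gets from the Galerkin orthogonality ($a(e,\tilde v)=0$ for all $\tilde v\in\widetilde V_H$) to $e\in W$ differently from the paper.

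\textbf{The paper's route.} The paper invokes \cref{thm:decomposition}. Writing $e=\tilde e+w$ with $\tilde e\in\widetilde V_H$ and $w\in W$, the relation $a(W,\widetilde V_H)=0$ gives $a(\tilde e,\tilde v)=0$ for all $\tilde v\in\widetilde V_H$. One then needs $a$ to be non-degenerate on $\widetilde V_H$ to conclude $\tilde e=0$. The paper leaves this step implicit, but in its setting it follows at once from the coercivity \eqref{eq:abound} by testing with $\tilde v=\tilde e$. So the obstacle you name for that route is not a real one here.

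\textbf{Your route.} You test with the representers $\mathcal{L}^{-*}q_j$, for which the definition \eqref{eq:L} turns orthogonality directly into $q_j(e)=0$. This uses only two facts. First, $\mathcal{L}^*$ is invertible. Second, $\mathcal{L}^{-*}q_j\in\widetilde V_H$; you need only this inclusion, not the full span identity. The inclusion follows from $a(W,\mathcal{L}^{-*}q_j)=0$ together with uniqueness for the corrector problem \eqref{eq:c1}.

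\textbf{What each approach buys.} Your argument never uses stability of $a$ on $\widetilde V_H$. It therefore carries over unchanged to forms that are merely inf--sup stable, where non-degeneracy on $\widetilde V_H$ would need a separate proof. It also settles the well-posedness point you raise. Applied to the difference of two solutions of \eqref{eq:discrete}, it puts that difference in $W\cap V_H=\{0\}$, which gives uniqueness. Existence is also easy: $P_Hu$ is a solution, since $(1-P_H)u\in W$ and $a(W,\widetilde V_H)=0$. The paper's version, in turn, is phrased in the orthogonal-decomposition picture that motivates the whole method.
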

\begin{proof}
Due to the conformity $\widetilde{V}_H \subset V$, and denoting $\tilde{v}_H=(1-\mathcal{C}^*) v_H$, we have a Galerkin orthogonality of the form
\begin{equation*}
    a(u-u_H, \tilde{v}_H) = F(\tilde{v}_H) - F(\tilde{v}_H) = 0.
\end{equation*}
By the characterization of $\widetilde{V}_H$ from \cref{thm:decomposition}, namely $a(W, \widetilde{V}_H)=0$ and $V =\widetilde{V}_H \oplus W $, we conclude $u-u_H \in W$. This directly yields $q_j(u-u_H) = 0$ for all $j \in \mathcal{J}$. 
\end{proof}

\subsection{Localized correctors}
In this section, we introduce a crucial feature of the corrector. It is well known that the ideal corrector has an exponential decay property \cite{Altmann21}. The exponential decay of the corrector $\mathcal{C}^*$ motivates the introduction of localized versions of this corrector,
$\mathcal{C}^*_\ell:V \to W$, which only spread information locally. We call $\widehat{\psi}_j = \mathcal{C}^* \phi_j \in W$. Due to~\eqref{eq:Db}, for any function $v_H \in V_H$, it holds
\begin{equation*}
    v_H = \sum_{j \in \mathcal{J}} q_j(v_H) \phi_j.
\end{equation*}
Consequently, we can characterize the application of $\mathcal{C}^*$ on a function in $V_H$ by 
\begin{equation}
    \mathcal{C}^* v_H = \sum_{j \in \mathcal{J}} q_j(v_H) \widehat{\psi}_j.
\label{eq:Corrector}
\end{equation}
Hence a canonical choice of $\mathcal{C}^*_\ell$ replaces the $\widehat{\psi}_j$ with local functions $ \widehat{\psi}_j^\ell \in W(\nei^\ell(\omega_j))$ which solve,  for all $w$ in $W(\nei^\ell(\omega_j))$, 
\begin{equation*}
    a(w, \widehat{\psi}_j^\ell) = a(w, \phi_j).
\end{equation*}
This then leads to the local version $\mathcal{C}^*_\ell: V_H \to W$, namely
\begin{equation}
    \mathcal{C}^*_\ell v_H = \sum_{j \in \mathcal{J}} q_j(v_H)  \widehat{\psi}_j^\ell.
\label{eq:localizedCorrector}
\end{equation}
For the localization analysis, we additionally assume that the
fine-scale space satisfies the local approximation property
\begin{equation}
    \|w\|_{L^2(\omega)}
    \lesssim
    H\|w\|_{V,\nei^r(\omega)}
    \qquad
    \text{for all }w\in W
\label{eq:local-W-approximation}
\end{equation}
for every element patch \(\omega\subseteq\Omega\), where
\(r\in\mathbb N\) is independent of \(H\). In particular,
\begin{equation}
    \|w\|_{L^2(\Omega)}
    \lesssim H\|w\|_V
    \qquad
    \text{for all }w\in W.
\label{eq:global-W-approximation}
\end{equation}

\begin{theorem}[Theorem 3.19 \cite{Altmann21}]
Under the approximation property
\eqref{eq:local-W-approximation}, the localized operator
$\mathcal{C}^*_\ell:V_H \to W$ defined in \eqref{eq:localizedCorrector} satisfies for $v_H \in V_H$,
\begin{equation}
    \lVert(\mathcal{C}^* - \mathcal{C}^*_\ell)v_H\rVert_V \lesssim H^{-s}  \exp(-c \ell) \lVert v_H \rVert_V,
\end{equation}
 where $\mathcal{C}^*$ is defined in \eqref{eq:Corrector} and where $s \geq 0$ and the rate $c$ depends on the size of the $\omega_j$, the mesh regularity and all local stability and continuity constants.
\label{thm:Decay}
\end{theorem}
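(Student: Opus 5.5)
The proof follows the standard LOD localization argument. It has two parts: an exponential decay estimate for each ideal basis corrector $\widehat{\psi}_j=\mathcal{C}^*\phi_j$, and a local-to-global summation using the finite overlap of the patches $\nei^\ell(\omega_j)$.

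\textbf{Step 1: decay of $\widehat{\psi}_j$.} Fix $j$ and take a cutoff $\eta\in W^{1,\infty}$ with $\eta=0$ on $\nei^{k}(\omega_j)$, $\eta=1$ outside $\nei^{k+m}(\omega_j)$ (with $m\ge r$ fixed, coupled to the constant of \eqref{eq:local-W-approximation}), and $\|\nabla\eta\|_\infty\lesssim H^{-1}$. The function $\eta\widehat{\psi}_j$ is not in $W$, so I correct it to $w:=(1-P_H)(\eta\widehat{\psi}_j)\in W$, using $\ker P_H=W$. By the locality of $\phi_j$ and $q_j$ (\eqref{eq:Da}), $P_H(\eta\widehat{\psi}_j)$ is supported in the transition layer $R:=\nei^{k+m+r'}(\omega_j)\setminus\nei^{k-r'}(\omega_j)$. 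Since $\supp\phi_j\subset\omega_j$ lies away from $\supp w$, testing \eqref{eq:c1} with $w$ gives $a(w,\widehat{\psi}_j)=a(w,\phi_j)=0$. Coercivity then gives
\[
\alpha\|\widehat{\psi}_j\|^2_{V,\Omega\setminus\nei^{k+m}(\omega_j)}\le a(\eta\widehat\psi_j,\widehat\psi_j)-(\text{terms on }R)=a(P_H(\eta\widehat\psi_j),\widehat\psi_j)+(\text{commutator terms with }\nabla\eta).
\]
The commutator terms come from $\nabla\eta$ and from the lower-order coefficients $\mathfrak{b},\mathfrak{c}$. They are bounded by $H^{-1}\|\widehat\psi_j\|_{L^2(R)}\|\widehat\psi_j\|_{V,R}\lesssim\|\widehat\psi_j\|^2_{V,\nei^r(R)}$ via \eqref{eq:local-W-approximation}. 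The term $P_H(\eta\widehat\psi_j)$ is bounded the same way, using local stability of $P_H$ (continuity of $q_j$ together with $H^{-1}$-scaled bounds on $\phi_j$). This yields a Caccioppoli-type inequality
\[
\|\widehat\psi_j\|^2_{V,\Omega\setminus\nei^{k+m}}\le C\,\|\widehat\psi_j\|^2_{V,\nei^{k+m'}\setminus\nei^{k-m'}}.
\]
Writing $A_k:=\|\widehat\psi_j\|^2_{V,\Omega\setminus\nei^{k}}$, this reads $A_{k+m}\le C(A_{k-m'}-A_{k+m'})$, hence $A_{k+m'}\le \frac{C}{1+C}A_{k-m'}$. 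Iterating gives $\|\widehat\psi_j\|_{V,\Omega\setminus\nei^\ell(\omega_j)}\lesssim e^{-c\ell}\|\phi_j\|_V$.

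\textbf{Step 2: local error.} $\widehat\psi_j^\ell$ is the Galerkin approximation of $\widehat\psi_j$ in $W(\nei^\ell(\omega_j))$, so by Céa's lemma (continuity/coercivity \eqref{eq:abound}) its error is bounded by the best approximation error. For the approximant I use $(1-P_H)(\tilde\eta\widehat\psi_j)$, where $\tilde\eta=1$ on $\nei^{\ell-m}(\omega_j)$ is a cutoff supported inside $\nei^\ell(\omega_j)$. The same estimates as in Step 1 give $\|\widehat\psi_j-\widehat\psi_j^\ell\|_V\lesssim e^{-c\ell}\|\phi_j\|_V$.

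\textbf{Step 3: summation.} Set $e=\sum_j q_j(v_H)(\widehat\psi_j-\widehat\psi^\ell_j)$. The crude bound via the triangle inequality costs a factor $N^{1/2}$, so instead I argue as follows. Each $e_j:=\widehat\psi_j-\widehat\psi_j^\ell$ satisfies $a(w,e_j)=0$ for all $w\in W$ supported outside $\nei^\ell(\omega_j)$. Testing $\|e\|_V^2\lesssim a(e,e)=\sum_j q_j(v_H)\,a(e,e_j)$ and splitting $e=(1-\eta_j)e+\eta_je$ with a cutoff $\eta_j$ around $\nei^{\ell+1}(\omega_j)$, one obtains $|a(e,e_j)|\lesssim\|e\|_{V,\nei^{\ell+m}(\omega_j)}\|e_j\|_V$. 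Cauchy--Schwarz and the overlap bound $\Colm$ (polynomial in $\ell$) then give
\[
\|e\|_V\lesssim \Colm^{1/2}e^{-c\ell}\Big(\sum_j|q_j(v_H)|^2\|\phi_j\|_V^2\Big)^{1/2}.
\]
Finally, $\sum_j|q_j(v_H)|^2\|\phi_j\|_V^2\lesssim H^{-2s}\|v_H\|_V^2$ by an inverse/stability estimate for the basis $\{\phi_j\}$. The polynomial factor $\Colm^{1/2}$ is absorbed by reducing $c$, and the $H^{-s}$ in the statement comes from this basis-stability constant.

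\textbf{Main obstacle.} The hardest step is Step 1 in the present non-symmetric, abstract-$q_j$ setting. The cutoff-and-project construction must keep $w$ in $W$ while leaving the support away from $\omega_j$. This requires the locality of $P_H$ from \eqref{eq:D}, and it is where the unspecified constants $c$ and $s$ are fixed.
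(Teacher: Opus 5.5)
Your proposal is correct and follows the standard argument behind the cited result (the paper gives no proof of its own and defers to \cite[Theorem~3.19]{Altmann21}). The three ingredients match it: decay of $\widehat{\psi}_j$ via the cutoff-and-project Caccioppoli iteration, C\'ea's lemma on $W(\nei^\ell(\omega_j))$, and the overlap summation through $a(e,e)=\sum_j q_j(v_H)\,a(e,e_j)$, with $H^{-s}$ coming from the basis stability of the coefficients $q_j(v_H)$. The one step to tighten is Step 3: $\eta_j e\notin W$, so you must test with $(1-P_H)(\eta_j e)$, exactly as in Step 1, and take $\eta_j$ to vanish on a large enough patch $\nei^{\ell+m}(\omega_j)$ that this corrected function still vanishes on $\nei^{\ell}(\omega_j)$.
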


As a direct consequence of the local correctors $\mathcal{C}^*_\ell$, we define the space
\begin{equation*}
    \widetilde{V}_H^\ell := (1 - \mathcal{C}^*_\ell) V_H.
\end{equation*}
The functions $\tilde{\phi}_j^\ell := (1-\mathcal{C}^*_\ell) \phi_j \in V(\nei^\ell(\omega_j))$, $j \in \mathcal{J}$, can again be characterized in the form of a saddle point problem which does not depend on $\phi_j$. First, let us define a subset  $\mathcal{J}_j \subset \mathcal{J}$  as $\mathcal{J}_j := \left\{ k \in \mathcal{J} \text{ such that } \operatorname{supp}(q_k) \cap \nei^\ell(\omega_j) \neq \emptyset \right\}$, and denote $N_j:=\lvert \mathcal{J}_j \rvert$. Also denote by $\sigma_j$ the function that maps the local index inside the patch $\nei^\ell(\omega_j)$ to the global index, i.e.,  $\sigma_j(n) \in \mathcal{J}_j$ for $0\leq n \leq N_j-1$. Then, there exists a Lagrange multiplier $\mu^{(j)}=[\mu_{0}^{(j)}, \dotsc, \mu_{N_j-1}^{(j)}]^\top \in \mathbb{R}^{N_j}$ such that the pair $(\tilde{\phi}_j^\ell,\mu^{(j)})\in V(\nei^\ell(\omega_j)) \times \mathbb{R}^{N_j}$ solves 
\begin{equation}
    \begin{array}{rll}
     \displaystyle  a(v,\tilde{\phi}_j^\ell) + \sum_{n=0}^{N_j-1} \mu_{n}^{(j)} q_{\sigma_j(n)}(v) &=0  & \forall v \in V(\nei^\ell(\omega_j)) \\
       \displaystyle    q_k(\tilde{\phi}_j^\ell) & = \delta_{kj} & \forall k \in \mathcal{J}_j.
    \end{array}
\label{eq:localProb}
\end{equation}
Let $\mathcal{L}_j^*:V(\nei^\ell(\omega_j)) \to V(\nei^\ell(\omega_j))'$ be the linear form associated with the bilinear form $a$ on $V(\nei^\ell(\omega_j))$
by 
\begin{equation*}
     \langle \mathcal{L}_j^*v,w\rangle_{V' \times V} :=  a(w,v) \quad {\rm for \ all} \ v,w \in V(\nei^\ell(\omega_j)).
\end{equation*}
We denote by $\mathcal{L}_j^{-*}:=(\mathcal{L}_j^*)^{-1}$ its inverse. Also, define $\mathcal{Q}_j: V(\nei^\ell(\omega_j))\to \mathbb{R}^{N_j}$ by ${(\mathcal{Q}_j v)_n = q_{\sigma_j(n)}(v)}$, for all $v \in V(\nei^\ell(\omega_j))$, $0\leq n \leq N_j-1$, and its adjoint $\mathcal{Q}_j^*: \mathbb{R}^{N_j} \to V(\nei^\ell(\omega_j))'$ by ${(\mathcal{Q}_j^*\mu^{(j)}) = \sum_{n=0}^{N_j-1} \mu_{n}^{(j)}  q_{\sigma_j(n)}}$.
Then, the first line of \eqref{eq:localProb} gives 
\begin{equation*}
    \mathcal{L}_j^{*} \tilde{\phi}_j^\ell + \mathcal{Q}_j^* \mu^{(j)} = 0, 
\end{equation*}
which implies 
\begin{equation*}
\tilde{\phi}_j^\ell = -\mathcal{L}_j^{-*}\mathcal{Q}_j^{*}\mu^{(j)}.
\end{equation*}
Then, substituting $\mathcal{Q}_j\tilde{\phi}_j^\ell = e_j$, where $e_j \in \mathbb{R}^{N_j}$ is defined as $[e_j]_n = \delta_{j\sigma_j(n)}$, it follows
\begin{equation*}
- \mathcal{Q}_j\mathcal{L}_j^{-*}\mathcal{Q}_j^{*}\mu^{(j)}=e_j. 
\end{equation*}
Now, we get  $\tilde{\phi}_j^\ell$ by 
\begin{equation*}
  \tilde{\phi}_j^\ell = \mathcal{L}_j^{-*}\mathcal{Q}_j^{*}(\mathcal{Q}_j\mathcal{L}_j^{-*}\mathcal{Q}_j^{*})^{-1} e_j.
\end{equation*}
By denoting the operator $\mathfrak{M}^{(j)}:=\mathcal{Q}_j\mathcal{L}_j^{-*}\mathcal{Q}_j^{*}: \mathbb{R}^{N_j} \to \mathbb{R}^{N_j}$, defined by its entries, for $ 0 \leq n, m \leq N_j-1$, $[\mathfrak{M}^{(j)}]_{nm}=q_{\sigma_j(n)}(\mathcal{L}_j^{-*}q_{\sigma_j(m)})$, we get
\begin{equation*}
\mu^{(j)} = -(\mathfrak{M}^{(j)})^{-1} e_j \quad {\rm and}  \quad
   \tilde{\phi}_j^\ell = - \sum_{n=0}^{N_j-1} \mu^{(j)}_n\mathcal{L}_j^{-*}q_{\sigma_j(n)}.
\end{equation*}
We summarize this procedure in \cref{algo:1}.
\begin{algorithm}[htbp]
\caption{Classical computation of the saddle point problem \eqref{eq:localProb}}
\begin{algorithmic}[1]
\Require Patch $\nei^\ell(\omega_j)$ with a finite index set $\mathcal{J}_j$ of size $N_j$.
\Ensure LOD basis function $\tilde{\phi}_j^\ell$.
\ForAll{$k \in \mathcal{J}_j$}
    \State Seek $\uloc_k^{j,\ell}:=\mathcal{L}_j^{-*}q_k \in V(\nei^\ell(\omega_j))$ such that $\forall v\in V(\nei^\ell(\omega_j))$, $\displaystyle a(v, \uloc_k^{j,\ell}) = q_k(v)$.
\EndFor
\State Assemble $\displaystyle [\mathfrak{M}^{(j)}]_{nm} = q_{\sigma_j(n)}(\uloc_{\sigma_j(m)}^{j,\ell})$ for $  0 \leq n, m \leq N_j-1$.
\State Solve $\mathfrak{M}^{(j)} \mu^{(j)}=-e_j$.
\State Compute $\displaystyle \tilde{\phi}_j^\ell = -\sum_{n=0}^{N_j-1} \mu^{(j)}_n \uloc_{\sigma_j(n)}^{j,\ell}$.
\end{algorithmic}
\label{algo:1}
\end{algorithm}

\subsection{Localized numerical homogenization}

Starting from the ideal formulation \eqref{eq:discrete}, we introduce the practical method in two steps. First, the global corrector $\mathcal C^*$ is replaced by its localized approximation $\mathcal C_\ell^*$, so that the corrected test functions can be computed independently on local patches. The resulting localization error decays exponentially with $\ell$, as quantified in~\cref{thm:Decay}. Second, the corrected right-hand side $F((1-\mathcal C_\ell^*)v_H)$ is replaced by the coarse-scale right-hand side $F(v_H)$. This Petrov--Galerkin perturbation introduces an additional consistency error of order $H$, but avoids the use of localized correctors in the assembly of the load vector.

The resulting localized Petrov--Galerkin method, which is used throughout the remainder of this work, reads as follows. Seek $u_H^\ell\in V_H$ such that
\begin{equation}
    a\bigl(u_H^\ell,(1-\mathcal C_\ell^*)v_H\bigr)=F(v_H)
    \qquad\text{for all }v_H\in V_H.
\label{eq:PGLOD}
\end{equation}
The main advantage of \eqref{eq:PGLOD} lies in the assembly of the discrete system. Since only the test functions are corrected, the row associated with a coarse basis function can be assembled using the localized corrector problems on the corresponding patch and the local coarse trial functions. The patch computations can therefore be carried out independently, without exchanging fine-scale solution data between different patches. In contrast, a Galerkin formulation with corrected functions on both the trial and test sides requires pairings of correctors associated with different overlapping patches and hence additional communication and data aggregation across patches. Furthermore, the uncorrected right-hand side in \eqref{eq:PGLOD} allows the load vector to be assembled solely from the standard coarse finite element basis functions.

Under the assumptions of the localization analysis, formulation \eqref{eq:PGLOD} is well posed and uniformly inf--sup stable if $\ell$ is chosen sufficiently large. Depending on the prefactor in the exponential decay estimate of \cref{thm:Decay}, $\ell$ may be chosen independently of $H$ or proportional to $|\log H|$; see, e.g., \cite{Gallist17,Altmann21,Malqvist21}.

\begin{theorem}[Error estimate for the localized Petrov--Galerkin formulation]
Let $u\in V$ denote the exact solution of \eqref{e:ProblemWeak}, and let $u_H^\ell\in V_H$ denote the solution of \eqref{eq:PGLOD}. Assume that $F(v)=(f,v)$ with $f\in L^2(\Omega)$, and let $\ell$ be sufficiently large to ensure uniform inf--sup stability. Then, for every $j\in\mathcal J$,
\begin{equation}
    \bigl|q_j(u)-q_j(u_H^\ell)\bigr|
    \lesssim
    \left(H^{-s'}\exp(-c\ell)+H\right)\|f\|_{L^2(\Omega)}\|q_j\|_{V'},
\label{eq:PGLOD-QoI-error}
\end{equation}
where $s'\geq 0$ and $c$ is the quantity introduced in \cref{thm:Decay}.
\label{thm:PGLOD-error}
\end{theorem}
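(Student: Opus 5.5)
The plan is to compare $u_H^\ell$ with the coarse representative $P_H u=\sum_k q_k(u)\phi_k\in V_H$. By \cref{lemma:preservation} and the identity $(1-\mathcal C^*)v=(1-\mathcal C^*)P_Hv$, we have $q_j(P_Hu)=q_j(u)$. Since $q_j(u_H^\ell)$ is the $j$-th coefficient of $u_H^\ell$ in the basis $\{\phi_k\}$, it follows that
\[
q_j(u)-q_j(u_H^\ell)=q_j(e_H),\qquad e_H:=P_Hu-u_H^\ell\in V_H .
\]
It therefore suffices to bound $|q_j(e_H)|\le\|q_j\|_{V'}\|e_H\|_V$. The estimate $\|e_H\|_V\lesssim (H^{-s'}e^{-c\ell}+H)\|f\|_{L^2}$ will come from the uniform inf--sup stability of \eqref{eq:PGLOD} together with a consistency estimate.

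\textbf{Consistency.} For every $v_H\in V_H$ we write
\[
a(e_H,(1-\mathcal C_\ell^*)v_H)=a(P_Hu,(1-\mathcal C_\ell^*)v_H)-F(v_H).
\]
Since $u-P_Hu\in W$ and $a(W,(1-\mathcal C^*)v_H)=0$ by \cref{thm:decomposition}, we get $a(P_Hu,(1-\mathcal C^*)v_H)=a(u,(1-\mathcal C^*)v_H)=F((1-\mathcal C^*)v_H)$. Inserting and subtracting $(1-\mathcal C^*)v_H$ gives
\[
a(e_H,(1-\mathcal C_\ell^*)v_H)=a\bigl(P_Hu,(\mathcal C^*-\mathcal C^*_\ell)v_H\bigr)-F(\mathcal C^*v_H).
\]

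\textbf{The two terms.} The first term is bounded by $\beta\|P_Hu\|_V\,H^{-s}e^{-c\ell}\|v_H\|_V$ via \cref{thm:Decay}. For the second, $\mathcal C^*v_H\in W$, so \eqref{eq:global-W-approximation} gives $|(f,\mathcal C^*v_H)|\le \|f\|_{L^2}\|\mathcal C^*v_H\|_{L^2}\lesssim H\|f\|_{L^2}\|\mathcal C^*v_H\|_V$. Stability of $\mathcal C^*$ then gives $\|\mathcal C^*v_H\|_V\le (\beta/\alpha)\|v_H\|_V$, since $\alpha\|\mathcal C^*v\|_V^2\le a(\mathcal C^*v,\mathcal C^*v)=a(\mathcal C^*v,v)$.

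\textbf{Bounding $\|P_Hu\|_V$.} We need $\|P_Hu\|_V\lesssim H^{-t}\|u\|_V$. I would obtain this from locality of the $\phi_k$ and $q_k$ via \eqref{eq:Da}, the finite overlap bound $C_{\mathrm{ol},\ell}$, and $\|q_k\|_{V'}\|\phi_k\|_V\lesssim H^{-t}$. Combined with $\|u\|_V\le\alpha^{-1}\|F\|_{V'}\lesssim\|f\|_{L^2}$ by Friedrichs, the factor $H^{-t}$ is absorbed into $s'=s+t$. This step depends on the unspecified scaling of the $\phi_j$, which is why the statement allows a generic $s'\ge0$.

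\textbf{Conclusion.} Applying the uniform inf--sup constant $\gamma>0$ of \eqref{eq:PGLOD} (assumed for large $\ell$), we get
\[
\gamma\|e_H\|_V\le\sup_{v_H}\frac{a(e_H,(1-\mathcal C^*_\ell)v_H)}{\|v_H\|_V}\lesssim \bigl(H^{-s'}e^{-c\ell}+H\bigr)\|f\|_{L^2}.
\]
Multiplying by $\|q_j\|_{V'}$ gives \eqref{eq:PGLOD-QoI-error}. The inf--sup constant may depend on $H$ through the norm equivalence on $V_H$; any such polynomial factor is likewise absorbed into $s'$. The main obstacle is the bookkeeping of these $H$-powers, and in particular keeping the $H$ term free of negative powers, which needs $\gamma$ to be uniform as assumed in the statement.
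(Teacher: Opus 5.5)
Your argument is correct, but it takes a different route from the paper. You run a single Strang-type argument. You compare $u_H^\ell$ directly with $P_Hu$, which is exactly the ideal Petrov--Galerkin solution of \eqref{eq:discrete}. You split the consistency error into a localization part $a(P_Hu,(\mathcal C^*-\mathcal C^*_\ell)v_H)$ and a load part $F(\mathcal C^*v_H)$, and then use the uniform inf--sup bound once.

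The paper proceeds in stages instead. It introduces the auxiliary corrected-load solution $u_H^{\ell,\mathrm c}$ and compares it with the localized Galerkin solution $u_H^{\ell,\mathrm G}$. It then imports the QoI estimate of \cite[Theorem~3.23]{Altmann21}, which needs decay of both the primal corrector $\mathcal C_\ell$ and the adjoint one. Only afterwards does it treat the load perturbation $F(\mathcal C^*_\ell v_H)$.

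Your route is more self-contained. It uses only \cref{thm:Decay}, the stability $\|\mathcal C^*v\|_V\le(\beta/\alpha)\|v\|_V$, \eqref{eq:global-W-approximation} and the inf--sup assumption. It also avoids the comparison $\|\widetilde u_H^{\ell,\mathrm G}-\widetilde u_H^{\ell,\mathrm c}\|_V$, which the paper only sketches.

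The one place where you add a hypothesis not in the paper is the scaling $\|q_k\|_{V'}\|\phi_k\|_V\lesssim H^{-t}$ used to bound $\|P_Hu\|_V$. It plays the role of the paper's factor $\sqrt N=H^{-d/2}$. You can drop it:
\begin{itemize}
\item Apply the inf--sup condition to $P_Hu$ itself and use your identity $a(P_Hu,(1-\mathcal C^*_\ell)v_H)=F((1-\mathcal C^*)v_H)+a(P_Hu,(\mathcal C^*-\mathcal C^*_\ell)v_H)$.
\item This gives $(\gamma-CH^{-s}e^{-c\ell})\|P_Hu\|_V\lesssim\|f\|_{L^2(\Omega)}$, hence $\|P_Hu\|_V\lesssim\|f\|_{L^2(\Omega)}$ whenever $CH^{-s}e^{-c\ell}\le\gamma/2$.
\item In the complementary regime the claim is trivial, because $\|u\|_V+\|u_H^\ell\|_V\lesssim\|f\|_{L^2(\Omega)}$ and $1\lesssim H^{-s}e^{-c\ell}$.
\end{itemize}
This yields the estimate with $s'=s$, which is sharper than the paper's $s'=s+d/2$.

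Finally, your remark that an $H$-dependent inf--sup constant could be ``absorbed into $s'$'' is not right, since it would also multiply the $H$ term. As you note yourself, the statement assumes a uniform $\gamma$, so this does not affect the proof.
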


\begin{proof}
We first separate the localization error from the perturbation of the right-hand side. For this purpose, let $u_H^{\ell,\mathrm c}\in V_H$ denote the auxiliary solution with corrected right-hand side, defined by
\begin{equation}
    a\bigl(u_H^{\ell,\mathrm c},(1-\mathcal C_\ell^*)v_H\bigr)
    =
    F\bigl((1-\mathcal C_\ell^*)v_H\bigr)
    \qquad\text{for all }v_H\in V_H.
\label{eq:corrected-rhs}
\end{equation}
Define $\widetilde u_H^{\ell,\mathrm c}:=(1-\mathcal C_\ell)u_H^{\ell,\mathrm c}$, where $\mathcal C_\ell$ denotes the localized primal corrector. Furthermore, let $u_H^{\ell,\mathrm G}\in V_H$ solve
\begin{equation}
    a\bigl((1-\mathcal C_\ell)u_H^{\ell,\mathrm G},(1-\mathcal C_\ell^*)v_H\bigr)
    =
    F\bigl((1-\mathcal C_\ell^*)v_H\bigr)
    \qquad\text{for all }v_H\in V_H,
\label{eq:localized-Galerkin}
\end{equation}
and set $\widetilde u_H^{\ell,\mathrm G}:=(1-\mathcal C_\ell)u_H^{\ell,\mathrm G}$. The standard localized Galerkin estimate gives
\begin{equation*}
    \bigl|q_j(u-\widetilde u_H^{\ell,\mathrm G})\bigr|
    \lesssim
    H^{-s}\sqrt{N}\exp(-c\ell)\|u\|_V\|q\|_{V'};
%\label{eq:localized-Galerkin-QoI}
\end{equation*}
see \cite[Theorem~3.23]{Altmann21}. Since $\mathcal C_\ell u_H^{\ell,\mathrm c}\in W$, we have
\begin{equation*}
    q_j(u-u_H^{\ell,\mathrm c})
    =
    q_j(u-\widetilde u_H^{\ell,\mathrm c}).
\end{equation*}
Moreover, subtracting \eqref{eq:corrected-rhs} from \eqref{eq:localized-Galerkin}, and using the exponential decay estimates for the primal and adjoint correctors together with uniform stability, yields
\begin{equation*}
    \|\widetilde u_H^{\ell,\mathrm G}-\widetilde u_H^{\ell,\mathrm c}\|_V
    \lesssim
    H^{-s}\sqrt{N}\exp(-c\ell)\|u\|_V.
%\label{eq:localized-formulation-comparison}
\end{equation*}
Consequently, the triangle inequality and the continuity of $q_j$ imply
\begin{equation}
    \bigl|q_j(u)-q_j(u_H^{\ell,\mathrm c})\bigr|
    \lesssim
    H^{-s}\sqrt{N}\exp(-c\ell)\|u\|_V\|q\|_{V'}.
\label{eq:corrected-rhs-QoI-error0}
\end{equation}
Under the assumption of a uniform mesh, we have $\sqrt{N}=H^{-\frac{d}{2}}$. By setting $s'=s+\frac{d}{2}$, and applying the Lax--Milgram lemma, \eqref{eq:corrected-rhs-QoI-error0} reduces to
\begin{equation}
    \bigl|q_j(u)-q_j(u_H^{\ell,\mathrm c})\bigr|
    \lesssim
    H^{-s'}\exp(-c\ell)\|f\|_{L^2(\Omega)}\|q\|_{V'}.
\label{eq:corrected-rhs-QoI-error}
\end{equation}
It remains to estimate the perturbation caused by the uncorrected right-hand side. Setting $e_H^\ell:=u_H^{\ell,\mathrm c}-u_H^\ell$ and subtracting \eqref{eq:PGLOD} from \eqref{eq:corrected-rhs}, we obtain
\begin{equation*}
    a\bigl(e_H^\ell,(1-\mathcal C_\ell^*)v_H\bigr)
    =
    -F(\mathcal C_\ell^*v_H)
    \qquad\text{for all }v_H\in V_H.
%\label{eq:rhs-perturbation-equation}
\end{equation*}
Since $\mathcal C_\ell^*v_H\in W$, the fine-scale approximation property \eqref{eq:global-W-approximation}, the stability of $\mathcal C_\ell^*$, and the uniform inf--sup stability of \eqref{eq:PGLOD} give
\begin{equation*}
    \|u_H^{\ell,\mathrm c}-u_H^\ell\|_V
    \lesssim
    H\|f\|_{L^2(\Omega)}.
%\label{eq:PG-rhs-perturbation}
\end{equation*}
Hence,
\begin{equation*}
    \bigl|q_j(u_H^{\ell,\mathrm c})-q_j(u_H^\ell)\bigr|
    \lesssim
    H\|f\|_{L^2(\Omega)}\|q_j\|_{V'}.
\end{equation*}
Combining this estimate with \eqref{eq:corrected-rhs-QoI-error} proves the assertion.
\end{proof}

\subsection{Offline--online strategy and hybrid approach}
In this section, we turn to the offline assembly of the stiffness matrix associated with problem \eqref{eq:PGLOD}. We then show how this assembly can be embedded into a hybrid framework in which classical solutions of the local problems are replaced by quantum-computing-based procedures.

First, recalling that  $V_H = {\rm span}\{\phi_j, \ j \in \mathcal{J} \}$, the stiffness matrix $\mathcal{A}$ is defined by its entries, for $i,j\in \mathcal{J}$, using the notation and the computation of $\tilde{\phi}_j^\ell$ introduced in \cref{algo:1},
\begin{equation*}
\mathcal{A}_{ji} = a(\phi_i, \tilde{\phi}_j^\ell) = -\sum_{n=0}^{N_j-1} \mu_n^{(j)} a(\phi_i, \uloc_{\sigma_j(n)}^{j, \ell})= -\sum_{n=0}^{N_j-1}\mu_n^{(j)} \s_{i\sigma_j(n)}^{(j)}, 
\end{equation*}
where $\s_{ik}^{(j)}:=a(\phi_i,  \uloc_{k}^{j, \ell})$ for $i,k\in \mathcal{J}_j$.  The classical offline assembly of the stiffness matrix is summarized in \cref{algo:2}.

\begin{algorithm}[htbp]
\caption{Classical Offline Assembly of the stiffness matrix}
\begin{algorithmic}[1]

\Require Patch $\nei^\ell(\omega_j)$ with a finite index set $\mathcal{J}_j$ of size $N_j$.
\Ensure LOD stiffness matrix $\mathcal{A}$. \Comment{Offline assembly of the stiffness matrix.}
\ForAll{$j \in \mathcal{J}$}
\ForAll{$k \in \mathcal{J}_j$}
    \State Seek $\uloc_k^{j,\ell}:=\mathcal{L}_j^{-*}q_k \in V(\nei^\ell(\omega_j))$ such that $\forall v\in V(\nei^\ell(\omega_j))$, $\displaystyle a(v, \uloc_k^{j,\ell}) = q_k(v)$.
\EndFor
\State Assemble $\displaystyle [\mathfrak{M}^{(j)}]_{nm} = q_{\sigma_j(n)}(\uloc_{\sigma_j(m)}^{j,\ell})$ for $  0 \leq n, m \leq N_j-1$.
\State Solve $\mathfrak{M}^{(j)} \mu^{(j)}=-e_j$.

\ForAll{$i \in \mathcal{J}_j$}
\State $  \displaystyle \mathcal{A}_{ji} = -\sum_{n=0}^{N_j-1} \mu_n^{(j)} a(\phi_i, \uloc_{\sigma_j(n)}^{j,\ell})= -\sum_{n=0}^{N_j-1}\mu_n^{(j)} \s_{i\sigma_j(n)}^ {(j)}.$
\EndFor
\EndFor
\end{algorithmic}
\label{algo:2}
\end{algorithm}
Once the stiffness matrix has been assembled in the offline phase, problem \eqref{eq:PGLOD} can be solved in the online phase without explicit knowledge of the LOD, or corrected, basis functions. Indeed, their contribution is already encoded in the entries of $\mathcal{A}$. Consequently, these basis functions need not be stored globally after the offline phase. Moreover, since the LOD basis functions are independent of the right-hand side, the stiffness matrix can be reused for different choices of $f$. Therefore, for each new right-hand side, only the online stage needs to be performed, consisting of assembling the corresponding coarse load vector and solving the coarse linear system.

In particular, in this work, we choose $f\in L^2(\Omega)$, and consider $F(v_H) = (f,v_H)$. The online computation is summarized in \cref{algo:3}.

\begin{algorithm}[htbp]
\caption{Classical Online Computation}
\begin{algorithmic}[1]
\Require Stiffness matrix $\mathcal{A}$; function $f$.
\Ensure Solution vector $U$. 
\ForAll{$j \in \mathcal{J}$} \Comment{Online assembly of the right-hand side $\mathbf{F}$.}
\State $\mathbf{F}_j =(f,\phi_j)_{\omega_j}.$
\EndFor
\State Solve $\mathcal{A} U = \mathbf{F}$.
\end{algorithmic}
\label{algo:3}
\end{algorithm}

\begin{remark}
Writing the coarse Petrov--Galerkin solution as
$u_H^\ell=\sum_{j\in\mathcal J}U_j\phi_j$,
the Lagrange property $q_i(\phi_j)=\delta_{ij}$ of the coarse trial basis gives $U_j=q_j(u_H^\ell)$ for all $j\in\mathcal J$. Thus, the solution vector computed in the online stage directly represents the quantities of interest of the coarse trial solution. 
\label{rem:ComputationQ}
\end{remark}

From \cref{algo:2} and \cref{algo:3}, we observe that the local problems enter the computation only during the offline phase, and only through scalar quantities derived from their solutions. More precisely, the assembly of the local matrices $\mathfrak{M}^{(j)}$ and of the global stiffness matrix $\mathcal{A}$ requires quantities of the form $q_i(\uloc_k^{j,\ell})$ and $a(\phi_i,\uloc_k^{j,\ell})$. Thus, the full local solutions $\uloc_k^{j,\ell}$ do not have to be stored or explicitly reconstructed, provided that these scalar quantities can be evaluated.

This structure is particularly well suited to a hybrid quantum--classical strategy. In the proposed approach, the local problems are solved by a quantum algorithm, while the quantities required for the classical assembly are obtained by quantum measurements. Since quantum algorithms naturally provide information through measurements of observables, the role of the quantum device is not to output the full fine-scale functions $\uloc_k^{j,\ell}$, but rather to provide the scalar data needed for the offline assembly.

For each patch $\nei^\ell(\omega_j)$, these scalar quantities have to be evaluated for all relevant pairs of local indices and for the two types of quantities above. Hence, the number of distinct scalar quantities required for the assembly is of order $2N_j^2$. The actual number of quantum measurements, or shots, needed to estimate these quantities depends on the prescribed measurement tolerance. As discussed below, $N_j$ is relatively small; in typical situations, $N_j=\mathcal{O}(2^d(\ell+1)^d)$, with $\ell$ chosen of order $|\log H|$ in the theoretical analysis, while in practical computations values between $2$ and $4$ are typically sufficient. Importantly, $N_j$ depends only on the coarse-patch size and not on the number of fine degrees of freedom used to discretize the local problem. This is crucial, since the fine-scale discretization on a patch may involve on the order of $\mathcal{O}(h^{-d})$ degrees of freedom. Therefore, the number of scalar measurements needed for the hybrid assembly is much smaller than the dimension of the fine-scale local problem. The resulting hybrid quantum--classical procedure is summarized in \cref{algo:4}.

\begin{algorithm}[htbp]
\caption{Hybrid Framework for the assembly of the stiffness matrix}
\begin{algorithmic}[1]

\Require Patch $\nei^\ell(\omega_j)$ with a finite index $\mathcal{J}_j$ of size $N_j$.
\Ensure LOD stiffness matrix $\mathcal{A}$.  \Comment{Offline computation}
\ForAll{$j \in \mathcal{J}$} \\
\State{\textbf{Quantum computation}:} \\
\ForAll{$i, k \in \mathcal{J}_j$}  
\State $\widehat{\s}_{ik}^{(j)} \gets \text{measure }\mathfrak{a}_{ik}^{(j)} \coloneq a(\phi_i, \uloc_k^{j,\ell})$.
\State $\widehat{\mathfrak{m}}_{ik}^{(j)} \gets \text{measure }\mathfrak{m}_{ik}^{(j)} \coloneq q_i(\uloc_k^{j,\ell})$.
\EndFor \\
\State{\textbf{Classical computation}:} \\
\State Assemble $\displaystyle [\widehat{\mathfrak{M}}^{(j)}]_{nm} =  \widehat{\mathfrak{m}}_{\sigma_j(n)\sigma_j(m)}^{(j)}$ for $  0 \leq n, m \leq N_j-1$.
\State Solve $\widehat{\mathfrak{M}}^{(j)} \widehat{\mu}^{(j)}=-e_j$.

\ForAll{$i \in \mathcal{J}_j$}
\State $ \displaystyle \widehat{\mathcal{A}}_{ji} = -\sum_{n=0}^{N_j-1}\widehat{\mu}_n^{(j)} \widehat{\s}_{i\sigma_j(n)}^{(j)}.$

\EndFor
\EndFor
\end{algorithmic}
\label{algo:4}
\end{algorithm}

Afterwards, the online computation of the global problem continues to follow \cref{algo:3}. In the following section, we illustrate this hybrid method for a specific choice of quantities of interest and describe the corresponding quantum computing procedures of \cref{algo:4}.

\section{Discretization and quantum implementation}
\label{sec.Quantum}

In this section, we present the discretization and quantum implementation of the proposed methodology for a specific choice of quantities of interest. Also, from now, for technical reasons, we consider only the diffusion part of \eqref{eq:Problem}, i.e., $\mathfrak{b}=0$ and $\mathfrak{c}=0$, and assume $\mathfrak{A} \in L^\infty(\Omega, \mathbb{R}^{d \times d}_{\rm sym})$. The question of quantum preconditioning of the full problem \eqref{eq:Problem} is still open. Besides this theoretical question, the methodology developed in what follows can also be applied to the full problem. In this context, the bounds $\alpha$ and $\beta$ defined in \eqref{eq:abound}, are just $\theta$ and $\Theta$ respectively, and from now, we track their dependence in the various estimates. 

\subsection{Coarse scale space}
First, we assume that $\mathcal{G}_H$ is a regular partition of $\Omega$ made of squares in two dimensions and cubes in three dimensions (see \cite[Remark 6.4]{Deiml25}). In this configuration, we redefine the mesh-size $H$ to be the side-length of the squares or cubes (\cref{thm:PGLOD-error} still holds with this definition of $H$).
\medskip

Let $Q_1(\G_H)$ denote the space of piecewise multilinear polynomials, meaning that each variable of the polynomials appears with power at most $1$. The space of globally continuous multilinear polynomials reads
\begin{equation*}
\cS^1(\G_H):= C^0(\Omega)\cap Q_1(\G_H).
\end{equation*}
In this section, we specify a concrete coarse space $V_H$, defined as the standard $Q_1$ finite element space, i.e., 
\begin{equation*}
V_H:=\cS^1(\G_H) \cap V.
\end{equation*}
The set of free vertices of $\G_H$ (the degrees of freedom) is denoted by
\begin{equation*}
      \N_H:=\{j\in\overline\Omega \text{ such that } 
           j\text{ is a vertex of }\G_H\text{ and }j\notin\partial\Omega\}.
\end{equation*}
From this point onward, for each vertex $j \in \mathcal{N}_H$, let $\phi_j \in V_H$ denote the associated nodal basis function, uniquely determined by the nodal values 
\begin{equation*}
    \phi_j(j) = 1 \ {\rm and }  \  \phi_j(i) = 0 \ {\rm for \ all} \  i \neq j \in \mathcal{N}_H.
\end{equation*}
These nodal basis functions form a basis of $V_H$, i.e., $V_H = {\rm span}\{\phi_j, \ j \in \mathcal{N}_H$\}. With this choice of nodal basis functions, we have $\omega_j = \supp \phi_j = \cup \{T \in \mathcal{G}_H \ {\rm such \ that} \ j \ {\rm is \ a \ vertex \ of } \ T \}$.  We also introduce $\N_H^{(j)}$ the set of local vertices of $\nei^\ell(\omega_j)$. To align with the notations introduced in the previous sections, we set $\mathcal{J}= \mathcal{N}_H$, where $\mathcal{N}_H$ denotes the set of free vertices of $\G_H$. For each $j \in \mathcal{N}_H$, we let $\mathcal{J}_j = \N_H^{(j)}$. Therefore, in this setting, that $N = \lvert \mathcal{N}_H \rvert$ and $ N_j  = \lvert\N_H^{(j)}\rvert$.

A natural choice of quantities of interest is
\begin{equation}
    q_j(v):=\sum_{i\in\mathcal N_H}(M^{-1})_{ij}(\phi_i,v)
    \qquad\text{for }j\in\mathcal N_H,
\label{eq:QoIHat0}
\end{equation}
where $M_{ij}:=(\phi_i,\phi_j)$. These functionals are the coordinate functionals of the $L^2$-orthogonal projection $P_H:V\to V_H$ defined by
\begin{equation*}
    P_Hv:=\sum_{j\in\mathcal N_H}q_j(v)\phi_j.
\end{equation*}
In particular, given the Lagrange properties $q_j(\phi_k)=\delta_{jk}$, \cref{rem:ComputationQ} holds. Indeed, if the solution of \eqref{eq:PGLOD} is written as
\begin{equation*}
    u_H^\ell=\sum_{j\in\mathcal N_H}U_j\phi_j,
\end{equation*}
then
\begin{equation}
    U_j=q_j(u_H^\ell),
\label{eq:coefficient-qoi}
\end{equation}
so that the coefficient vector directly represents the approximations of the quantities of interest appearing in \cref{thm:PGLOD-error}. The functionals $q_j$ are global and therefore do not satisfy the locality assumptions of the abstract localization result \cref{thm:Decay} directly, but they are exponentially quasi-local owing to the decay of the inverse mass matrix on quasi-uniform meshes; see, e.g.,~\cite{Demko77}. For ease of the practical quantum implementation, we follow the original LOD construction of~\cite{Malqvist14} for local patch computations and employ the (local) weighted Cl\'ement-type averaging functionals, defined as 
\begin{equation}
    \qq_j(v):=\frac{(\phi_j,v)_{\nei^\ell(\omega_j)}}{(\phi_j,1)_{\nei^\ell(\omega_j)}}
    \qquad\text{for }j\in\mathcal N_H,
\label{eq:QoIHat}
\end{equation}
and the associated quasi-interpolation operator
\begin{equation*}
    I_Hv:=\sum_{j\in\mathcal N_H}\qq_j(v)\phi_j.
\end{equation*}
That is, all occurrences of generic functionals $q_j$ in local problems \eqref{eq:localProb} and in \cref{algo:1,algo:2,algo:4} are instantiated by local functionals $\qq_j$. 

It should be noted that this practical technique does not change the method. In fact, although $I_H$ is not a projection, it has the same kernel as $P_H$. Indeed,
\begin{equation}
    \ker(P_H)
    =
    \left\{v\in V \ {\rm such \ that} \ (\phi_j,v)=0\text{ for all }j\in\mathcal N_H\right\}
    =
    \ker(I_H).
\label{eq:same-kernel-PH-IH}
\end{equation}
Consequently, both families define the same ideal fine-scale space $W$ and hence the same ideal multiscale test space $\widetilde V_H$. The local approximation and stability properties of $I_H$, including the local right-inverse property required for the construction of $\qq$-Lagrange representatives, are established in \cite{Malqvist14}. Hence, the implemented Cl\'ement-type basis approximates the same ideal test space exponentially, and the corresponding error is covered by the localization term in~\cref{thm:PGLOD-error}. Since the nodal trial basis remains unchanged, the coefficient identity \eqref{eq:coefficient-qoi} remains valid (still with the quantities of interest \eqref{eq:QoIHat0}), although $\qq_j(\phi_k)\neq\delta_{jk}$.

In addition, for the chosen partition $\mathcal{G}_H$, each subdomain $\omega_j$  
is a square (or a cube) domain with side length of $2H$. The corresponding $\ell$th-order patch  $\nei^\ell(\omega_j)$, used in the localization and decay analysis, is consequently a square (or a cube) domain with side length of $2(\ell+1)H$, containing $2^d(\ell+1)^d$ coarse elements.

\begin{remark}
For nodes $j \in \mathcal{N}_H$ near the boundary $\partial \Omega$, the patch $\nei^\ell(\omega_j)$ may not be fully contained in the domain, i.e., 
$\nei^\ell(\omega_j) \not \subset \Omega$. In this case, for ease of implementation, we use the technique introduced in \cite[Section 3.4]{ThesisMohr}, which is based on domain extension and allows handling of homogeneous Dirichlet boundary conditions. The consideration of such cases is explained in~\cref{sec:DomainExtension}. 
\end{remark}
Within this setting, we first recall some useful estimates for what follows. First, from standard elliptic theory, there exist constants $\theta_j, \Theta_j >0$ such that, for all $v \in V(\nei^\ell(\omega_j))$, 
\begin{equation}
\theta \lVert v \rVert_{V,\nei^\ell(\omega_j)}^2  \leq \theta_j \lVert v \rVert_{V,\nei^\ell(\omega_j)}^2 \leq \langle  \mathcal{L}^{*}_jv,v\rangle_{V' \times V} \leq \Theta_j \lVert v \rVert_{V,\nei^\ell(\omega_j)}^2 \leq \Theta \lVert v \rVert_{V,\nei^\ell(\omega_j)}^2.
\label{eq:Ljbound}
\end{equation}
Also, it holds, for $j\in \mathcal{N}_H$,
\begin{equation}
    (\phi_j, 1)_{\nei^\ell(\omega_j)}\approx H^d, \quad \lVert \phi_j \rVert_{V, \nei^\ell(\omega_j)} \approx H^{\frac{d}{2}-1}, \quad \lVert \phi_j \rVert_{L^2(\nei^\ell(\omega_j))} \approx H^{\frac{d}{2}}.
\label{eq:EstimateHat}
\end{equation}
Then, by the Lax–Milgram lemma, we obtain, for all $j\in \N_H$ and $k \in \N_H^{(j)}$, 
\begin{equation*}
    \lVert \mathcal{L}^{-*}_j\qq_k\rVert_{V,\nei^\ell(\omega_j)} \leq \frac{1}{\theta_j} \lVert \qq_k \rVert_{V',\nei^\ell(\omega_j)},
\end{equation*}
where
\begin{equation*}
    \lVert \qq_k \rVert_{V',\nei^\ell(\omega_j)} = \sup_{v \in V(\nei^\ell(\omega_j)),v\neq 0} \frac{(\phi_k, v)_{\nei^\ell(\omega_j)}}{\lVert v \rVert_{V,\nei^\ell(\omega_j)}}\frac{1}{(\phi_k, 1)_{\nei^\ell(\omega_j)}}.
\end{equation*}
Now, applying the Cauchy-Schwarz and then the Poincaré inequality on the quantity $(\phi_k, v)_{\nei^\ell(\omega_j)}$, and using \eqref{eq:EstimateHat}, it follows 
\begin{equation}
    \lVert \qq_k\rVert_{V',\nei^\ell(\omega_j)} \lesssim \frac{H^{\frac{d}{2}} 2(\ell+1)H}{H^d} \lesssim 2(\ell+1)H^{1-\frac{d}{2}}, 
\label{eq:estimateqk}
\end{equation}
and then
\begin{equation}
    \lVert \mathcal{L}^{-*}_j\qq_k\rVert_{V,\nei^\ell(\omega_j)}  \lesssim \frac{2(\ell+1)}{\theta_j}   H^{1-\frac{d}{2}}.
\label{eq:EstimateLq}
\end{equation}

In what follows,  we define for $y \in \mathbb{R}^n$ the Euclidean norm $\lVert y \rVert_2= \sqrt{y^\top y}$. Then, the corresponding matrix norm is given for $A \in \mathbb{R}^{n \times m}$ by $\lVert A \rVert_2= \sqrt{\lambda_{\rm max}(A^\top A)}$ where $\lambda_{\rm max}$ denotes the largest eigenvalue. By $A^+$ we denote the Moore--Penrose inverse, which is equivalent to $A^{-1}$ for invertible matrices, and by $\kappa(A) \coloneqq \|A\|_2 \|A^+\|_2$ its condition number.

\subsection{Quantum local solver}
\label{sec.QuantumSolver}
To solve the local problems, a fine discretization of the patch $\nei^\ell(\omega_j)$ is required. Let $L_{\max} \in \mathbb{N}$ be the fine discretization level. We then consider the sequence of fine grids $\mathcal{G}_{h_L}^{(j)}$ of size $h_L = 2^{-L}H$ for $L = 1, \dots, L_{\max}$, obtained by sub-dividing each coarse cell of the patch $2^{L}$ times in each spatial direction, and we denote $h \coloneqq h_{L_{\max}}$. We define, for $j \in \mathcal{N}_H$ and $1 \le L \le L_{\max}$, the finite element space $V_{h_L}^{(j)} \coloneqq \cS^1(\mathcal{G}_{h_L}^{(j)}) \cap V(\nei^\ell(\omega_j))$. We further denote $Q_h \coloneqq Q_1(\mathcal{G}_h)$ to be the space of (discontinuous) piecewise multilinear polynomial functions.

We consider the local problems of finding the approximation $\uloc_{h, k}^{j,\ell}$ to $\uloc_k^{j, \ell}$ defined in \cref{algo:2}. Specifically, the local problems read as follows. Seek $\uloc_{h,k}^{j,\ell} \in V_{h}^{(j)}$ such that, for all $v \in V_{h}^{(j)}$, 
\begin{equation*}
    a(v, \uloc_{h,k}^{j,\ell}) = (\mathfrak{A}  \nabla v, \nabla \uloc_{h,k}^{j,\ell}) = \qq_k(v).
\end{equation*}
From now, let us consider a single patch $\nei^\ell(\omega_j)$. For simplicity we do not track the dependence to $j$ and $\ell$ in the objects defined in what follows. 
Following \cite{Deiml25}, we decompose the bilinear form~$a$ into the multiplication with the coefficient $\mathfrak{A}$, defined as a function acting on the vector-valued space $Q_h^d$, and the gradient restricted to $V_h^{(j)}$. The image of this restriction is then also contained in $Q_h^d$. This space admits a $L^2$ orthonormal basis $\{\zeta_0, \dots, \zeta_{2^d|\mathcal{G}_h| - 1}\}$, such that each function has support on only one cell of $\mathcal{G}_h$. Written in this basis, the multiplication with the coefficient~$\mathfrak{A}$ then becomes a block diagonal matrix that we denote $D_\mathfrak{A}$. Further, due to the orthonormality, the $L^2$ inner product in the space $Q_h$, and by extension $Q_h^d$, is equivalent to the Euclidean inner product. 

For each inner vertex $z$ of each fine grid $\mathcal{G}_{h_L}$, we denote the corresponding nodal basis function~$\lambda_z^{(L)}$. One could consider the standard basis of $V_h^{(j)}$ consisting of the functions $\lambda_z^{(L_{\max})}$, however, this would lead to a linear system whose condition number depends on the fine-scale $h$. Instead, we consider a special generating system of $V_h^{(j)}$, related to the BPX preconditioner~\cite{Bramble89,Osw94}, namely
\begin{equation*}
\mathcal{Z} := \{2^{-L(2 - d)/2} \lambda_{z}^{(L)}, \ L = 1, \dots, L_{\max}, \; z \text{ is an inner vertex of } \mathcal{G}_{h_L} \} = \{\xi_0, \dots, \xi_{|\mathcal{Z}| - 1}\}.
\end{equation*}
We assemble the gradient into a matrix $G$ using this generating system, meaning, for $0 \leq s \leq |\mathcal{Z}| - 1$,
\[
\nabla \xi_s = \sum_{t = 0}^{2^d|\mathcal{G}_h| - 1} G_{ts} \zeta_t.
\]
Then, we can write the bilinear form $a$ using this matrix, for $0 \leq s,t \leq |\mathcal{Z}| - 1$, as
\[
a(\xi_s, \xi_t) = (G^\top D_\mathfrak{A} G)_{st}.
\]
Crucially, this triple product has a bounded condition number independent of $h$.

First, following \Cref{algo:4}, we need to measure the quantities $\mathfrak{m}_{ik}^{(j)} \coloneq \qq_i(\uloc_{h,k}^{j,\ell})$. Let $\mathbf{\qq}_k \in \mathbb{R}^{|\mathcal{Z} |}$ be the vector whose entries are the values of the quantity of interest $\qq_k$ evaluated at the elements of $\mathcal{Z}$, i.e.,\ $[\mathbf{\qq}_k]_s = \qq_k(\xi_s)$. Then, 
\begin{equation} \label{eq:quantum-approximation-a}
\mathfrak{m}_{ik}^{(j)} \coloneq \qq_i(\uloc_{h,k}^{j,\ell}) = \mathbf{\qq}_i^\top (G^\top D_\mathfrak{A} G)^+ \mathbf{\qq}_k.
\end{equation}
Next, we need to measure the quantities $\s_{ik}^{(j)} \coloneq a(\phi_i, \uloc_{h,k}^{j,\ell})$.
However, for $i\in \N_H^{(j)}$, $\phi_i$ may correspond to a vertex on the boundary of the patch, in which case it is not in the space $V_h^{(j)}$. Consequently, the gradient is not correctly calculated as $G \,\mathbf{\qq}_{i}$. Instead, we directly assemble the vector representation of $\nabla \phi_i \in Q_h^d$, denoted $\mathbf{g}_i$. This is possible since $\mathbf{g}_i$ can be efficiently computed analytically. We then compute
\begin{equation} \label{eq:quantum-approximation-b}
\s_{ik}^{(j)} = a(\phi_i, \uloc_{h,k}^{j,\ell}) = \mathbf{g}_{i}^\top D_{\mathfrak{A}} G(G^\top D_{\mathfrak{A}} G)^+ \mathbf{\qq}_{k} = \mathbf{g}_{i}^\top D_{\mathfrak{A}}^{ \frac{1}{2}} (G^\top D_{\mathfrak{A}}^{ \frac{1}{2}})^+ \mathbf{\qq}_{k}.
\end{equation}
Note, that \eqref{eq:quantum-approximation-b} may not be simplified further since $(G D_\mathfrak{A}^{ \frac{1}{2}})^+ = (D_\mathfrak{A}^{ \frac{1}{2}})^+ G^+$ does not hold in general.

To perform this computation on a quantum computer, we must construct \emph{block encodings} of $G$ and $D_\mathfrak{A}$. Block encodings are a concept specific to quantum computing. The full definition is not needed here. For further details see \cite[Section~4]{Gilyen19} and \cite[Section~4]{Deiml25}. We recall the following.
\begin{definition}[Block encoding]
    Let $n, m \in \mathbb N$ and $X \in \mathbb{R}^{n \times m}$. A \emph{block encoding} of $X$ is a quantum algorithm that encodes $X$. Its complexity is characterized by its normalization $\gamma(X) \ge \|X\|_2$ and runtime $T(X) \in \mathbb{N}$. A block encoding of a vector $x \in \mathbb{R}^n$ is simply a block encoding of $x$ as a $n \times 1$ matrix. Similarly a block encoding of a scalar $\lambda \in \mathbb{R}$ is a block encoding of $\lambda$ as a $1 \times 1$ matrix.
\end{definition}
Most importantly, block encodings allow linear algebraic computations with a logarithmic or polylogarithmic number of operations in terms of the dimensions $n$ and $m$, i.e.,
\[\mathcal{O}(\polylog nm) \coloneq \mathcal{O}((\log nm)^\nu)\]
for some fixed $\nu > 0$. To preserve this computational advantage, we cannot extract a full matrix or vector from the quantum computer, and are instead limited to scalar measurements. The process of measuring is known as amplitude estimation in this context \cite{Brassard02}.

\begin{lemma}[Amplitude estimation] \label{lem:amplitude-estimation}
    Let $\lambda \in \mathbb{R}$, which is given as a block encoding. Then, for any failure probability $\delta > 0$ and absolute error tolerance $\tol > 0$ there is a quantum algorithm $\mathcal E(\lambda, \delta, \tol)$ that estimates $\lambda$ with the given probability and error in time
    \[
    \mathcal{O}(\tol^{-1}\gamma(\lambda) \log \delta^{-1} T(\lambda)).
    \]
    In other words,
    \[
    \Pr(|\mathcal{E}(\lambda, \delta, \tol) - \lambda| > \tol) < \delta.
    \]%
\label{lemma:Amplitude_estimation}%
\end{lemma}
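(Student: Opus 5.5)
This is the standard amplitude estimation result of Brassard et al. \cite{Brassard02}, lifted to block-encoded scalars with median amplification. I would prove it in three steps: convert the block encoding into a reflection-type amplitude estimation problem, run canonical amplitude estimation with constant success probability, and boost the success probability by repetition.

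\emph{Step 1: reduction to an amplitude.} A block encoding $U$ of $\lambda$ with normalization $\gamma(\lambda)$ is a unitary circuit of cost $T(\lambda)$ such that $\langle 0|U|0\rangle = \lambda/\gamma(\lambda)$ in the designated ancilla block. Estimating $\lambda$ to accuracy $\tol$ is therefore equivalent to estimating the real amplitude $a := \lambda/\gamma(\lambda)\in[-1,1]$ to accuracy $\varepsilon := \tol/\gamma(\lambda)$.

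Since the sign of $\lambda$ matters, I would use a Hadamard-test circuit rather than plain amplitude estimation, which only yields $|a|^2$. Add one control qubit, prepare $|+\rangle$, apply $U$ controlled on it, and apply a Hadamard. The probability of measuring the control in $|0\rangle$ together with the ancilla-zero flag is then an affine function of $a$, for instance $p = (1+a)/2$ up to the standard normalization. This circuit $\mathcal{A}$ costs $\mathcal{O}(T(\lambda))$ and satisfies $\mathcal{A}|0\rangle = \sqrt{p}\,|\mathrm{good}\rangle + \sqrt{1-p}\,|\mathrm{bad}\rangle$. Estimating $p$ to accuracy $\mathcal{O}(\varepsilon)$ then gives $a$ to accuracy $\mathcal{O}(\varepsilon)$.

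\emph{Step 2: canonical amplitude estimation.} Form the Grover iterate $Q = -\mathcal{A}S_0\mathcal{A}^{-1}S_{\mathrm{good}}$, whose eigenphases are $\pm 2\theta$ with $\sin^2\theta = p$. Run phase estimation with $M$ controlled applications of $Q$. By \cite[Theorem 12]{Brassard02}, the output $\tilde p$ satisfies $|\tilde p - p| \le 2\pi\sqrt{p(1-p)}/M + \pi^2/M^2$ with probability at least $8/\pi^2$. Choosing $M = \Theta(\varepsilon^{-1}) = \Theta(\gamma(\lambda)\tol^{-1})$ gives error at most $\tol/\gamma(\lambda)$ in $a$, hence at most $\tol$ in $\lambda$. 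The cost is $\mathcal{O}(M\,T(\lambda)) = \mathcal{O}(\tol^{-1}\gamma(\lambda)T(\lambda))$, since each application of $Q$ uses $\mathcal{A}$ and $\mathcal{A}^{-1}$ once.

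\emph{Step 3: failure probability.} Repeat the procedure $K = \mathcal{O}(\log\delta^{-1})$ times independently and output the median. The median lies outside the $\tol$-interval only if at least half of the runs fail. Each run fails with probability at most $1-8/\pi^2 < 1/2$, so a Chernoff/Hoeffding bound makes this probability $\exp(-cK) < \delta$. The total cost is $\mathcal{O}(\tol^{-1}\gamma(\lambda)\log\delta^{-1}\,T(\lambda))$, as claimed.

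The main obstacle is Step 1. Plain amplitude estimation recovers only $|a|^2$, so one must check carefully that the Hadamard-test modification recovers the signed value $\lambda$ linearly without degrading the $\tol^{-1}$ scaling into $\tol^{-2}$. In particular, near $p\approx 0$ the error bound is dominated by the $\pi^2/M^2$ term, and this must still be consistent with linear accuracy. The affine encoding $p=(1+a)/2$ keeps the dependence linear, and the remaining bookkeeping is routine.
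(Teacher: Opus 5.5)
Your proposal is correct and follows essentially the paper's route: the paper gives no separate proof of this lemma and simply invokes amplitude estimation from \cite{Brassard02}, and your steps (Hadamard-test reduction to a signed amplitude, Theorem~12 there with $M=\Theta(\gamma(\lambda)\tol^{-1})$, and median amplification over $\mathcal{O}(\log\delta^{-1})$ repetitions) are the standard way to derive the stated bound. Two small corrections: the affine relation $p=(1+a)/2$ holds for the marginal probability of the control qubit alone, since requiring the ancilla flag as well gives $(1+a)^2/4$ (its square root is affine, so that variant also works); and the $p\approx 0$ concern is moot, because with your choice of $M$ the $\pi^2/M^2$ term is only $\mathcal{O}(\tol^2/\gamma(\lambda)^2)$.
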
%

In \cite{Deiml25}, we presented a construction of almost optimal block encodings for $G$ and $D_\mathfrak{A}$. Our main results are summarized in the following theorem.

\begin{theorem}[Quantum Realization of FEM] \label{thm:quantum-fem}
Assume that $\mathfrak{A}$ is classically computable. Then there exists block encodings of $G$ and $D_\mathfrak{A}$ with runtime and normalization, 
\[
T(G), T(D_\mathfrak{A}) \in \mathcal{O}(\polylog h^{-1}), \qquad \gamma(G) \lesssim \sqrt{\log h^{-1}}\|G\|_2, \quad\text{and}\quad \gamma(D_\mathfrak{A}) \le \Theta.
\]
Further $\kappa(G) \in \mathcal{O}(1)$ and $\kappa(D_\mathfrak{A}) \le \Theta / \theta$.
Block encodings of $\mathbf{\qq}_{i}$, respectively $\mathbf{g}_i$,   can be constructed with runtime and normalization
\[
T(\mathbf{\qq}_i) \in \mathcal{O}(\polylog h^{-1}), \qquad \gamma(\mathbf{\qq}_i) = \|\mathbf{\qq}_i\|_2,
\]
and analogously for $\mathbf{g}_i$.
\end{theorem}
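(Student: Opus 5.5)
The plan is to import the constructions of \cite{Deiml25} for the operator parts and then to verify only the new ingredients, the vectors $\mathbf{\qq}_i$ and $\mathbf{g}_i$, directly.

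\textbf{Step 1: the operators $G$ and $D_\mathfrak{A}$.} The statements concerning $G$ and $D_\mathfrak{A}$ will be taken essentially from \cite{Deiml25}, after checking that our patch setting fits that framework.

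For $G$, the patch $\nei^\ell(\omega_j)$ is a cube of side $2(\ell+1)H$, uniformly refined $L_{\max}$ times. The generating system $\mathcal{Z}$ is exactly the BPX-scaled multilevel hierarchy. Hence:
\begin{itemize}
\item the sparse, structured block encoding of the multilevel gradient from \cite{Deiml25} applies, with $\gamma(G)\lesssim\sqrt{L_{\max}}\,\|G\|_2$ and $L_{\max}\approx\log h^{-1}$;
\item the bound $\kappa(G)\in\mathcal{O}(1)$ on the range is the classical BPX norm equivalence \cite{Bramble89,Osw94}. That is, $\sum_s c_s^2\approx\inf\{\ldots\}$ for multilevel decompositions, so the nonzero singular values of $G$ are uniformly comparable.
\end{itemize}

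For $D_\mathfrak{A}$, the matrix is block diagonal in the cellwise orthonormal basis $\zeta_t$, with blocks given by local $L^2$ projections of multiplication by $\mathfrak{A}$. Classical computability of $\mathfrak{A}$ lets us compute each block in $\mathcal{O}(\polylog h^{-1})$ by reversible arithmetic. The spectral bounds $\theta\le\mathfrak{A}\le\Theta$ transfer to the blocks, giving $\|D_\mathfrak{A}\|_2\le\Theta$ and $\kappa(D_\mathfrak{A})\le\Theta/\theta$. A standard diagonal or block-diagonal encoding then has normalization $\Theta$.

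\textbf{Step 2: the vectors $\mathbf{\qq}_i$ and $\mathbf{g}_i$.} These are the genuinely new part. A state-preparation block encoding with $\gamma=\|x\|_2$ requires that:
\begin{itemize}
\item the entries can be computed in polylog time, and
\item the partial norms (sums of squares over dyadic index blocks) can be computed in polylog time. This is Grover--Rudolph-type preparation.
\end{itemize}

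For $\mathbf{\qq}_i$, we have $[\mathbf{\qq}_i]_s=2^{-L(2-d)/2}(\phi_i,\lambda_z^{(L)})/(\phi_i,1)$. Both $\phi_i$ and $\lambda_z^{(L)}$ are piecewise multilinear tensor-product hats, so these integrals factor into products of one-dimensional integrals with closed forms. For the partial sums, we order $\mathcal{Z}$ by level and then lexicographically by vertex. A dyadic block then corresponds to a tensor-product box of vertices at one level, and the sum of squares factorizes over dimensions into one-dimensional sums of squares of piecewise polynomial sequences. These have closed forms computable in $\mathcal{O}(\polylog h^{-1})$.

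For $\mathbf{g}_i$, the coefficient of $\nabla\phi_i$ in $\zeta_t$ is constant on fine cells inside each coarse cell. The gradient of a $Q_1$ function is affine in the other variables, so in each direction it is again a tensor product. The same factorized partial-norm argument applies.

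\textbf{Main obstacle.} I expect the hardest part to be the efficient computation of the partial norms for $\mathbf{\qq}_i$. Its entries span all levels and include vertices near the patch boundary. The plan is to split the index set into levels and, within each level, into at most $\mathcal{O}(1)$ tensor-product boxes on which $\phi_i$ restricted to fine elements is a single multilinear polynomial. On each such box the sum of squares reduces to products of one-dimensional power sums, which are evaluated by Faulhaber-type formulas.
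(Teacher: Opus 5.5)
Your proposal is correct and follows essentially the paper's route. The paper likewise takes the encodings of $G$ and $D_\mathfrak{A}$, including the BPX-based bound on $\kappa(G)$, from \cite{Deiml25}; for $\mathbf{\qq}_i$ and $\mathbf{g}_i$ it only points to \cite[Appendix~A]{Deiml25} and remarks that these particular vectors are easy to encode, and your Grover--Rudolph construction with factorized, Faulhaber-type partial norms makes that remark explicit. One small correction: the coefficients of $\nabla\phi_i$ are not constant over the fine cells of a coarse cell (e.g.\ $\partial_1\phi_i$ varies linearly in $x_2$), but the tensor-product factorization you invoke in the next sentence handles this anyway.
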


The construction of block encodings for vectors is given in \cite[Appendix~A]{Deiml25}. Note, that the concrete right-hand sides given here can easily be encoded in polylogarithmic runtime, but in general this is a more difficult problem. The bounds on the runtime and normalization indicate that the forward evaluation of the bilinear form is efficiently possible. Solving the corresponding linear system requires a quantum linear system solver, which can be characterized as follows.

\begin{lemma}[Pseudoinverse of block encodings] \label{lem:inversion}
Let $n, m \in \mathbb{N}$ and $X \in \mathbb{R}^{n \times m}$. Assume we have access to a block encoding of $X$, as well as a lower bound $\sigma_{-} > 0$ of its smallest non-zero singular value, i.e.,\ \(\sigma_{-} \le \sigma_{\min}(X) = \|X^+\|_2^{-1}\). Then for any $\tol > 0$ we can construct a block encoding~$Y$ approximating the pseudoinverse $X^+$, such that
\[
\|Y - X^+\|_2 \le \tol, \qquad \gamma(Y) = \sigma_{-}^{-1}, \qquad T(Y) \in \mathcal{O}(T(X) \gamma(X)/\sigma_{-} \log \tol^{-1}).
\]
Note that $\gamma(X) / \sigma_{-} \ge \kappa(X) = \sigma_{\max}(X) / \sigma_{\min}(X)$.
\end{lemma}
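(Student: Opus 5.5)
The plan is to realize $X^+$ by quantum singular value transformation (QSVT) applied to the normalized block encoding $X/\gamma(X)$. The approach follows the matrix-inversion construction in \cite{Gilyen19} and in \cite[Section~4]{Deiml25}. Set $\delta \coloneqq \sigma_-/\gamma(X) \in (0,1]$. The given block encoding of $X$ is a unitary whose top-left block is $X/\gamma(X)$. Hence every nonzero singular value of $X/\gamma(X)$ lies in $[\delta,1]$, and the remaining singular values are exactly zero. Inverting $X$ on its range amounts to applying the scalar function $x \mapsto \delta/x$ to these singular values. This function is bounded by $1$ on $[\delta,1]$, which is what makes it compatible with QSVT.

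\emph{Step 1 (polynomial approximation).} Construct a real odd polynomial $p$ with $|p(x)|\le 1$ for $x\in[-1,1]$ and
\[
\sup_{\delta\le |x|\le 1}\Bigl|p(x)-\tfrac{\delta}{2x}\Bigr|\le \varepsilon .
\]
Its degree should be $\mathcal{O}(\delta^{-1}\log\varepsilon^{-1})$. I would take the standard construction: multiply $\tfrac{1-(1-x^2)^b}{x}$, with $b=\mathcal{O}(\delta^{-2}\log\varepsilon^{-1})$, by a suitable scaling. Then expand in Chebyshev polynomials and truncate, which brings the degree down to $\mathcal{O}(\sqrt{b\log(b/\varepsilon)})=\mathcal{O}(\delta^{-1}\log(\delta^{-1}\varepsilon^{-1}))$. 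If the extra $\log\delta^{-1}$ must be avoided, I would instead cite the sharper polynomial of \cite[Corollary~69]{Gilyen19}.

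\emph{Step 2 (QSVT).} Apply QSVT with $p$ to the block encoding of $X/\gamma(X)$. This produces a block encoding of $p^{(SV)}\bigl((X/\gamma(X))^\top\bigr)$ using $\deg p$ calls to the block encoding of $X$. The runtime is therefore $\mathcal{O}(T(X)\,\gamma(X)\,\sigma_-^{-1}\log\varepsilon^{-1})$. Because $p$ is odd, we have $p(0)=0$, so the kernel and co-kernel are mapped to zero. This matches the Moore--Penrose inverse, which vanishes on $\operatorname{range}(X)^\perp$. On the range, each singular value $s\in[\delta,1]$ is sent to approximately $\delta/(2s)$. It follows that
\[
\Bigl\|p^{(SV)}\bigl((X/\gamma(X))^\top\bigr)-\tfrac{\sigma_-}{2}X^+\Bigr\|_2\le\varepsilon .
\]
Multiplying by the scalar $2/\sigma_-$ gives an operator $Y$ with normalization $2\sigma_-^{-1}$. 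The factor $2$ can be removed by uniform singular value amplification, or simply absorbed into the $\mathcal{O}$-constants, so that $\gamma(Y)=\sigma_-^{-1}$.

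\emph{Step 3 (error bookkeeping).} Choose $\varepsilon=\tol\,\sigma_-/2$, which gives $\|Y-X^+\|_2\le\tol$. The resulting $\log\varepsilon^{-1}=\log\tol^{-1}+\log(2\sigma_-^{-1})$ is absorbed into $\mathcal{O}(\log\tol^{-1})$. This is justified in the regime of interest, $\tol\lesssim\sigma_-^{-1}$, since otherwise $Y=0$ already suffices. The closing remark is immediate: $\gamma(X)\ge\|X\|_2=\sigma_{\max}(X)$ and $\sigma_-\le\sigma_{\min}(X)$ give $\gamma(X)/\sigma_-\ge\kappa(X)$.

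I expect the main obstacle to be Step 1. The polynomial must achieve the optimal degree $\mathcal{O}(\delta^{-1}\log\varepsilon^{-1})$ while staying bounded by $1$ on all of $[-1,1]$, including the gap $(-\delta,\delta)$ where $1/x$ blows up. Boundedness on the whole interval is a necessary condition for QSVT to be applicable, so this requirement cannot be relaxed. Rather than rederive this, I would cite the existing result from \cite{Gilyen19} and check only that its normalization constants fit.
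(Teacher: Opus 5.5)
Your Steps~1--2 are the argument the paper relies on. The paper gives no proof of its own and only points to \cite[Theorem~41]{Gilyen19}, which is QSVT applied to the adjoint block encoding with a bounded odd polynomial approximating $1/x$. Two small corrections there. First, the explicit recipe in Step~1 does not by itself give $|p|\le 1$. With $b\approx\delta^{-2}\log\varepsilon^{-1}$, the function $\tfrac{\delta}{2}\cdot\tfrac{1-(1-x^2)^b}{x}$ peaks at a height of order $\delta\sqrt{b}\approx\sqrt{\log\varepsilon^{-1}}$ near $|x|\approx b^{-1/2}$, which lies inside the gap. So the extra work the cited corollary does to enforce boundedness is really needed. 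Second, \cite[Corollary~69]{Gilyen19} has degree $\mathcal{O}(\kappa\log(\kappa/\varepsilon))$, so citing it does not remove the $\log\delta^{-1}$.

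The real gap is Step~3. It is true that $Y=0$ works when $\tol\ge\sigma_{-}^{-1}$. But the complementary regime $\tol<\sigma_{-}^{-1}$ gives no bound of $\log\sigma_{-}^{-1}$ in terms of $\log\tol^{-1}$: for fixed $\tol$ and $\sigma_{-}\to 0$ the ratio is unbounded. The absorption is valid only when $\tol\lesssim\sigma_{-}$ (or $\sigma_{-}\gtrsim 1$), which is the opposite regime.

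In fact no argument can close this in general. The circuit that block-encodes $X$ also block-encodes $cX$ for any $c>0$, with normalization $c\gamma(X)$, the same runtime $T$, and the same ratio $\gamma/\sigma_{-}$. Accuracy $\tol$ for $(cX)^+$ is accuracy $c\,\tol$ for $X^+$, with unchanged normalization $\sigma_{-}^{-1}$. A bound $\mathcal{O}(T(X)\gamma(X)\sigma_{-}^{-1}\log\tol^{-1})$ valid for all $c$ would therefore give arbitrarily accurate inversion at a fixed query cost. That is impossible uniformly over admissible $X$, since no fixed-degree polynomial approximates $1/x$ on $[\delta,1]$ arbitrarily well.

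What your construction and \cite[Theorem~41]{Gilyen19} actually deliver is
$T(Y)\in\mathcal{O}\bigl(T(X)\,\gamma(X)\sigma_{-}^{-1}\log(\gamma(X)\sigma_{-}^{-2}\tol^{-1})\bigr)$.
Here the tolerance enters on the scale $\sigma_{-}^{-1}$ of $\|X^+\|_2$, and a $\log(\gamma(X)/\sigma_{-})$ factor appears. The $\log\tol^{-1}$ in the lemma has to be read as shorthand for this, and you should conclude with this bound rather than assert the absorption.
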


See \cite[Theorem~41]{Gilyen19} as an example of a quantum linear system solver with this asymptotic complexity. More efficient solvers like \cite{Low24,An22} achieve an even better scaling with the condition number.
Finally, we give the following characterization of multiplications of block-encoded matrices or vectors.

\begin{lemma}[Multiplication of block encodings] \label{lem:multiplication}
    Let $n, m, p \in \mathbb{N}$ and $X \in \mathbb{R}^{n \times m}, Y \in \mathbb{R}^{m \times p}$. Assume we have access to block encodings of $X$ and $Y$. Then, we can construct a block encoding of the product $XY$, such that
    \[
    \gamma(XY) = \gamma(X)\gamma(Y), \qquad T(XY) = T(X) + T(Y).
    \]
\label{lemma:Mul}
\end{lemma}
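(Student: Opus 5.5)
The plan is to work from the concrete definition of a block encoding that is only alluded to in the paper, namely \cite[Section~4]{Gilyen19} and \cite[Section~4]{Deiml25}. A block encoding of $X \in \mathbb{R}^{n\times m}$ with normalization $\gamma(X)$ is a unitary circuit $U_X$ acting on an ancilla register of $a_X$ qubits together with a system register, padded so that both $n$ and $m$ embed into it, such that
\[
(\langle 0^{a_X}| \otimes \Pi_n)\, U_X \,(|0^{a_X}\rangle \otimes \Pi_m^\top) = X/\gamma(X),
\]
where $\Pi_n,\Pi_m$ are the isometric embeddings and restrictions onto the first $n$, respectively $m$, basis states. The runtime $T(X)$ is the gate count of $U_X$. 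The product encoding is then the standard one, $U_{XY} := (U_X \otimes I_{a_Y})(U_Y \otimes I_{a_X})$, with the two ancilla registers kept disjoint. In words, $U_Y$ acts on its own ancillas and the system register, and $U_X$ afterwards acts on its ancillas and the same system register.

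\textbf{Key steps.}
\begin{enumerate}
\item Fix a common system register large enough for $\max(n,m,p)$. Define $U_{XY}$ as above. Its gate count is trivially $T(X)+T(Y)$, since the circuits are just concatenated and the identity factors cost nothing.
\item Project both ancilla registers onto $|0\rangle$. For $|0^{a_X}0^{a_Y}\rangle\otimes|\psi\rangle$ with $\psi \in \operatorname{range}\Pi_p^\top$, write
\[
(U_Y\otimes I)\,|0^{a_X}0^{a_Y}\rangle|\psi\rangle = |0^{a_X}\rangle \otimes \Big( |0^{a_Y}\rangle \otimes \tfrac{Y}{\gamma(Y)}\psi + |\mathrm{junk}\rangle \Big),
\]
where the junk term is orthogonal to $|0^{a_Y}\rangle$ on the $Y$-ancilla.
\item Apply $U_X\otimes I_{a_Y}$. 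This map acts as the identity on the $Y$-ancilla. The junk term therefore stays orthogonal to $\langle 0^{a_Y}|$ and vanishes under the final projection.
\item In the surviving term, the $X$-ancilla starts in $|0^{a_X}\rangle$ and the system state lies in $\operatorname{range}\Pi_m^\top$. Projecting with $\langle 0^{a_X}|\otimes\Pi_n$ thus yields $\tfrac{X}{\gamma(X)}\tfrac{Y}{\gamma(Y)}\psi$. This is exactly a block encoding of $XY$ with normalization $\gamma(X)\gamma(Y)$.
\end{enumerate}

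\textbf{Main obstacle.} The step needing care is step~4. The output of $Y/\gamma(Y)$ must lie exactly in the $m$-dimensional subspace that $U_X$'s top-left block reads. Otherwise the padding rows of $U_Y$, which are nonzero in general, would leak into $X$'s encoding through the system register.

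I would first handle this by requiring in the definition that the encoded block is exactly $\Pi_m\,(\cdot)\,\Pi_p^\top$ with all other entries of the ancilla-zero block equal to zero. This is the convention in \cite{Gilyen19}, since a block encoding of an $m\times p$ matrix is the full $2^s\times 2^s$ top-left block padded with zeros. If the reference instead allows garbage in the padding, I would compose with an extra projector-controlled flag qubit that marks states outside $\operatorname{range}\Pi_m$. This costs only $\mathcal{O}(\log m)$ gates and does not change $\gamma$. The resulting additive overhead is absorbed into the $\mathcal{O}(\polylog)$ runtimes used in the paper.

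Finally, $\gamma(XY)=\gamma(X)\gamma(Y)\ge\|X\|_2\|Y\|_2\ge\|XY\|_2$, so the normalization requirement $\gamma \ge \|\cdot\|_2$ in the definition is satisfied.
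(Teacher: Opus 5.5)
Your proof is correct and follows the standard argument for this lemma; the paper itself states the lemma without proof and recalls it from the block-encoding literature. Putting $U_X$ and $U_Y$ on disjoint ancilla registers makes the ancilla-zero block of $U_{XY}$ the product of the two ancilla-zero blocks, and the zero-padding convention of \cite{Gilyen19}, which you rightly flag as the delicate point, prevents leakage through the system register; to keep the construction closed under that convention, as needed when the lemma is iterated in \eqref{eq:gammaM}, note that running your steps 2--4 for arbitrary $\psi$ shows the full ancilla-zero block of $U_{XY}$ is again exactly zero-padded.
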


Note that this implies, in particular, that one has to be quite careful to multiply matrices with large condition number on a quantum computer, as this can lead to a large ratio between the normalization~$\gamma(XY)$ and norm~$\|XY\|_2$. This in turn would cause the encoded values to be hard to measure according to \Cref{lem:amplitude-estimation}. The decomposition given by~\eqref{eq:quantum-approximation-a} and \eqref{eq:quantum-approximation-b}, however, is chosen to avoid such issues.

\begin{theorem}[Quantum solution of patch problems]
    Let $\tol, \delta > 0$ and $i, k \in \mathcal{N}_H^{(j)}$. There exists a quantum algorithm that computes an approximation of $\widehat{\mathfrak{m}}_{ik}^{(j)} \approx \qq_i(\uloc_{h,k}^{j,\ell})$ or $\widehat{\s}_{ik}^{(j)} \approx a(\phi_i, \uloc_{h,k}^{j,\ell})$ such that the probability of an absolute error larger than $\tol$ is at most $\delta$. The algorithm has runtime
\begin{equation*}
    \mathcal{O}\left((\ell + 1)^2 \frac{\Theta}{\theta^2} H^{2-d}\tol^{-1} \polylog(h^{-1}) \log (\tol^{-1}) \log (\delta^{-1}) \right) \quad {\rm for} \ \widehat{\mathfrak{m}}_{ik}^{(j)}
\end{equation*}
and
\begin{equation*}
\mathcal{O}\left((\ell + 1) \frac{\Theta}{\theta} \tol^{-1} \polylog(h^{-1}) \log (\tol^{-1}) \log (\delta^{-1}) \right) \quad {\rm for} \ \widehat{\mathfrak{a}}_{ik}^{(j)}.
\end{equation*}
\label{eq:ErrorMeas}
\end{theorem}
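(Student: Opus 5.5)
The plan is to realize each scalar as a product of block encodings and then apply \cref{lem:amplitude-estimation}. By \cref{lemma:Amplitude_estimation}, the runtime is governed by $\gamma(\lambda)T(\lambda)\tol^{-1}\log\delta^{-1}$. It therefore suffices to build block encodings of the two scalars \eqref{eq:quantum-approximation-a} and \eqref{eq:quantum-approximation-b} with $T\in\mathcal O(\polylog h^{-1}\log\tol^{-1})$ and normalizations
\[
\gamma(\mathfrak m)\lesssim(\ell+1)^2\Theta\theta^{-2}H^{2-d},\qquad \gamma(\mathfrak a)\lesssim(\ell+1)\Theta\theta^{-1}
\]
(up to $\polylog$ factors). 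To obtain the final accuracy $\tol$, the pseudoinverse inside the block encoding has to be approximated to tolerance $\tol$ divided by the product of the outer normalizations; this only affects the logarithmic factor.

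For $\mathfrak a_{ik}$ I will encode $X:=D_\mathfrak A^{1/2}G$, i.e.\ $G^\top D_\mathfrak A^{1/2}$ transposed. A block encoding of $D_\mathfrak A^{1/2}$ is available from that of $D_\mathfrak A$ by QSVT (polylog cost, normalization $\sqrt\Theta$); alternatively one assembles it directly as in \cref{thm:quantum-fem}. \Cref{lemma:Mul} then gives $\gamma(X)\lesssim\sqrt{\Theta\log h^{-1}}\,\|G\|_2$. The smallest singular value satisfies $\sigma_{\min}(X)\ge\sqrt\theta\,\sigma_{\min}(G)$, where $\sigma_{\min}(G)=\|G\|_2/\kappa(G)\approx\|G\|_2$. \Cref{lem:inversion} then yields $Y\approx X^+$ with $\gamma(Y)\approx(\sqrt\theta\|G\|_2)^{-1}$ and $T(Y)\in\mathcal O(\sqrt{\Theta/\theta}\polylog h^{-1}\log\tol^{-1})$. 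Multiplying $\mathbf g_i^\top D_\mathfrak A^{1/2}\,(X^\top)^+\,\mathbf q_k$ gives normalization
\[
\|\mathbf g_i\|_2\sqrt\Theta\,(\sqrt\theta\|G\|_2)^{-1}\|\mathbf q_k\|_2 .
\]
The remaining step is to bound $\|\mathbf g_i\|_2\|\mathbf q_k\|_2/\|G\|_2$. Orthonormality of $\zeta_t$ gives $\|\mathbf g_i\|_2=\|\phi_i\|_{V}\approx H^{d/2-1}$. Since $[\mathbf q_k]_s=\qq_k(\xi_s)$, we have $\|\mathbf q_k\|_2=\sup_y \qq_k(\sum_s y_s\xi_s)/\|y\|_2$. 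BPX stability, in the form $\|\nabla\sum y_s\xi_s\|\le\|G\|_2\|y\|_2$, then gives $\|\mathbf q_k\|_2\le\|G\|_2\,\|\qq_k\|_{V'}\lesssim\|G\|_2(\ell+1)H^{1-d/2}$ by \eqref{eq:estimateqk}. The $H$-powers cancel, leaving $(\ell+1)\sqrt{\Theta/\theta}$. Combining this with $T(Y)$ produces the extra factor $\sqrt{\Theta/\theta}$, so the total is $(\ell+1)\Theta/\theta$, as claimed.

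For $\mathfrak m_{ik}$ I encode $X:=G^\top D_\mathfrak AG$ via \cref{lemma:Mul}, which gives $\gamma(X)\lesssim\Theta\log h^{-1}\|G\|_2^2$ and $\sigma_{\min}(X)\ge\theta\sigma_{\min}(G)^2\approx\theta\|G\|_2^2$. Here I use that $G$ restricted to the complement of its kernel has condition $\mathcal O(1)$, and that the pseudoinverse acts on the range, which contains $\mathbf q_k$ because $\qq_k$ annihilates the kernel of the generating system. \Cref{lem:inversion} gives $\gamma(Y)\approx(\theta\|G\|_2^2)^{-1}$ and $T(Y)\in\mathcal O(\Theta\theta^{-1}\polylog h^{-1}\log\tol^{-1})$. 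The scalar $\mathbf q_i^\top Y\mathbf q_k$ then has normalization
\[
\|\mathbf q_i\|_2\|\mathbf q_k\|_2/(\theta\|G\|_2^2)\lesssim(\ell+1)^2H^{2-d}/\theta
\]
by the same bound on $\|\mathbf q\|_2$. Multiplying by $T(Y)$ gives $(\ell+1)^2\Theta\theta^{-2}H^{2-d}$.

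The main obstacle is the step $\|\mathbf q_k\|_2\lesssim\|G\|_2\|\qq_k\|_{V'}$ together with the claim $\sigma_{\min}(G)\approx\|G\|_2$ on the relevant range. Both rely on the BPX norm equivalence encoded in $\kappa(G)\in\mathcal O(1)$ from \cref{thm:quantum-fem}. One also has to check that the right-hand sides lie in the range of $X^\top$ (respectively $X$), so that the pseudoinverse reproduces the Galerkin solution $\uloc_{h,k}^{j,\ell}$. The error propagation from $\|Y-X^+\|_2$ to the scalar is routine: the error is bounded by the outer normalizations times the approximation tolerance.
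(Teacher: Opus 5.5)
Your proposal is correct and follows the paper's proof. It uses the same factorizations \eqref{eq:quantum-approximation-a} and \eqref{eq:quantum-approximation-b}, combines them through \cref{lemma:Mul}, \cref{lem:inversion} and amplitude estimation, and relies on the same key bounds $\|\mathbf{\qq}_k\|_2\le\|\qq_k\|_{V',\nei^\ell(\omega_j)}\|G\|_2$ and $\|\mathbf{g}_i\|_2\approx H^{d/2-1}$. The only difference is bookkeeping: the paper puts the whole $\Theta/\theta^2$ into the normalization through the cruder bound $\kappa(D_\mathfrak{A}^{1/2}G)^2\|D_\mathfrak{A}^{1/2}G\|_2^{-2}\lesssim\frac{\Theta}{\theta^2}\|G\|_2^{-2}$ (analogously for $\mathfrak{a}$) and counts the block-encoding runtime as merely polylogarithmic, whereas you use the sharp normalization $(\theta\|G\|_2^2)^{-1}$ and recover the missing $\Theta/\theta$ (resp.\ $\sqrt{\Theta/\theta}$) from the factor $\gamma(X)/\sigma_-$ in $T(Y)$; this is the more faithful reading of \cref{lem:inversion} and gives the same product, provided you take the direct-assembly option for $D_\mathfrak{A}^{1/2}$, since a QSVT square root costs an extra factor depending on $\Theta/\theta$, not just polylog.
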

\begin{proof}
    The construction follows \eqref{eq:quantum-approximation-a} and \eqref{eq:quantum-approximation-b} by using \Cref{thm:quantum-fem,lem:multiplication,lem:inversion}. Using \Cref{lem:amplitude-estimation}, the complexity is given by
    \begin{equation} \label{eq:complexity}
    \mathcal{O} \left(\gamma\tol^{-1} \polylog (h^{-1}) \log (\tol^{-1}) \log (\delta^{-1}) \right),
    \end{equation}
where $\gamma$ is the normalization of the measured block encoding, which has to be estimated for both measured quantities.  Let us start with $\qq_i(\uloc_{h,k}^{j, \ell})$, which is computed as $\mathbf{\qq}_i^\top (G^\top D_\mathfrak{A} G)^+ \mathbf{\qq}_k$. With~\cref{lemma:Mul}, it follows that
\begin{equation}
    \gamma(\mathbf{\qq}_i^\top (G^\top D_\mathfrak{A} G)^+ \mathbf{\qq}_k) = \gamma(\mathbf{\qq}_i)\gamma( (G^\top D_\mathfrak{A} G)^+)  \gamma(\mathbf{\qq}_k).
\label{eq:gammaM}
\end{equation}
Now, we propose to estimate the two different values involved in the right-hand side of \eqref{eq:gammaM}. First, using \Cref{thm:quantum-fem,lem:inversion}, the normalization of the pseudoinverse is given by
\begin{equation}
    \gamma((G^\top D_\mathfrak{A}G)^+) \lesssim \|(D_\mathfrak{A}^{ \frac{1}{2}}G)^+\|_2^2 = \frac{\kappa(D_\mathfrak{A}^{ \frac{1}{2}} G)^2}{\|D_\mathfrak{A}^{ \frac{1}{2}}G\|_2^2} \lesssim \frac\Theta\theta\|D_\mathfrak{A}^{ \frac{1}{2}}G\|_2^{-2} \lesssim \frac{\Theta}{\theta^2} \|G\|_2^{-2}.
\label{eq:gammaGD}
\end{equation}
To estimate the Euclidean norm of $\mathbf{\qq}_k$, observe that for any vector $\mathbf{c} \in \mathbb{R}^{|\mathcal{Z}|}$ we have
    \[
    \left\|\sum_{s = 0}^{|\mathcal{Z}|-1} \mathbf{c}_s \xi_s\right\|_{V, \nei^\ell(\omega_j)}^2 = \mathbf{c}^\top G^\top G\mathbf{c} = \|G\mathbf{c}\|_2^2 \qquad\text{and}\qquad \qq_k\left(\sum_{s = 0}^{|\mathcal{Z}|-1} \mathbf{c}_s \xi_s\right) = \mathbf{\qq}_k^\top \mathbf{c}.
    \]
    By setting $\mathbf{c} = \mathbf{\qq}_k$, we can thus obtain the following bound,
    \begin{align*}
    \|\mathbf{\qq}_k\|_2^2 &= \mathbf{\qq}_k^\top \mathbf{\qq}_{k} = \qq_k\left(\sum_{s = 0}^{|\mathcal{Z}|-1} [\mathbf{\qq}_k]_s \xi_s\right) \le \|\qq_k\|_{V',\nei^\ell(\omega_j)} \left\|\sum_{s = 0}^{|\mathcal{Z}|-1} [\mathbf{\qq}_{k}]_s \xi_s\right\|_{V, \nei^\ell(\omega_j)} \\
    & \leq \|\qq_k\|_{V',\nei^\ell(\omega_j)} \|G\mathbf{\qq}_{k}\|_2 \le \|\qq_k\|_{V',\nei^\ell(\omega_j)} \|G\|_2 \|\mathbf{\qq}_{k}\|_2.
    \end{align*}
Dividing by $\|\mathbf{\qq}_{k}\|_2$ on both sides, and using estimate \eqref{eq:estimateqk}, we get
\begin{equation}
        \gamma(\mathbf{\qq}_{k}) = \|\mathbf{\qq}_{k}\|_2 \le \|\qq_k\|_{V',\nei^\ell(\omega_j)} \|G\|_2 = 2(\ell + 1)H^{1-\tfrac{d}{2}} \|G\|_2.
    \label{eq:gammaqk}
\end{equation}
At the end, combining \eqref{eq:gammaGD} and \eqref{eq:gammaqk}, leads to
\begin{equation} \label{eq:gammaM-final}
    \gamma(\qq_i(\uloc_{h,k}^{j,\ell})) = \gamma\big(\mathbf{\qq}_i^\top (G^\top D_\mathfrak{A} G)^+ \mathbf{\qq}_{k}\big) \lesssim \frac{\Theta}{\theta^2}(\ell + 1)^2 H^{2 - d}.
\end{equation}

Now, we turn to the normalization for $a(\phi_i, \uloc_{h,k}^{j,\ell})$, which is approximated as $\mathbf{g}_{i}^\top D_{\mathfrak{A}}^{ \frac{1}{2}} (G^\top D_{\mathfrak{A}}^{ \frac{1}{2}})^+ \mathbf{\qq}_{k}$. Using the same procedure as previously, it remains to estimate $\gamma(\mathbf{g}_{i})$. Using estimate \eqref{eq:EstimateHat}, it follows
\begin{equation}
\gamma(\mathbf{g}_{i}) = \|\phi_i\|_{V, \nei^\ell(\omega_i)} \approx H^{\tfrac{d}{2}-1}.
\label{eq:gammag}
\end{equation}
At the end, combining  \eqref{eq:gammaGD}, \eqref{eq:gammaqk} and \eqref{eq:gammag}, it follows
\begin{equation} \label{eq:gammaA-final}
\gamma(a(\phi_i, \uloc_{h,k}^{j,\ell})) = \gamma\big(\mathbf{g}_{i}^\top D_\mathfrak{A}^{\frac{1}{2}} (G^\top D_\mathfrak{A}^{\frac{1}{2}})^+ \mathbf{\qq}_{k}\big) \lesssim \frac{\Theta}{\theta}(\ell + 1). 
\end{equation}
The statement of the theorem then results from combining \eqref{eq:complexity} with \eqref{eq:gammaM-final} or \eqref{eq:gammaA-final} respectively.
\end{proof}

\section{Sensitivity analysis}
\label{sec.Sens}

In the proposed hybrid approach, we consider the stiffness matrix $\widehat{\mathcal{A}}$ given by \cref{algo:4}, obtained from measurements of quantum processes. It is well known that such measurements are inherently noisy \cite{Nielsen_Chuang_2010}, both due to the stochastic noise of the idealized model as described in~\cref{lem:amplitude-estimation} and imperfections of the hardware realization. Consequently, $\widehat{\mathcal{A}}$ is an approximation of the true stiffness matrix $\mathcal{A}$ from \cref{algo:2}. In what follows, we discuss the impact of measurement noise and assess the quality of this approximation for both the stiffness matrix and the solution of the global problem. Some reminders about sensitivity of linear problems are given in \cref{sec:Reminder}. 

Keeping the notations of \cref{algo:2} and \cref{algo:4}, we denote the perturbed measurements by $\widehat{\mathfrak{M}}^{(j)}:=\mathfrak{M}^{(j)}+ \mathcal{N}_{\mathfrak{M}}^{(j)}$ and  $\widehat{\s}_{ik}^{(j)} := \s_{ik}^{(j)} +\mathcal{N}_{\s_{ik}}^{(j)}$, where $\mathcal{N}$ denotes the noise, and let us define  \begin{equation}
     \mathcal{E}_{\s} = \max_{j \in \N_H} \max_{ i,k \in \N_H^{(j)}}   \lvert \mathcal{N}^{(j)}_{\s_{ik}} \rvert, \; {\rm and} \; \mathcal{E}_\mathfrak{M} = \max_{j \in \N_H} \lVert \mathcal{N}^{(j)}_{\mathfrak{M}} \rVert_{\max}.
\label{eq:maxNoise}
    \end{equation}
Recall that, for each $j \in \N_H$, $\widehat{\mu}^{(j)}$ solves 
\begin{equation}
    \widehat{\mathfrak{M}}^{(j)}\widehat{\mu}^{(j)}=-e_j.
\label{eq:Mmu}
\end{equation}
To account for the perturbations, we replace \eqref{eq:Mmu}, by the following perturbed system 
 \begin{equation}
    (\mathfrak{M}^{(j)}+\mathcal{N}^{(j)}_{\mathfrak{M}})(\mu^{(j)}+\delta \mu^{(j)})=-e_j,
\label{eq:MmuPerturbed}
\end{equation}
where $\delta \mu^{(j)}$ represents the change in the solution.

\begin{lemma}
 The perturbed stiffness matrix $\widehat{\mathcal{A}}$ from \cref{algo:4} satisfies 
\begin{equation*}
     \widehat{\mathcal{A}} = \mathcal{A} + \delta \mathcal{A}, 
\end{equation*}
where $\mathcal{A}$ is the true stiffness matrix from \cref{algo:2}, and where $\delta \mathcal{A}$ is defined by its entries for $i,j \in \N_H$, by
\begin{equation*}
    \delta  \mathcal{A}_{ji} \approx \sum_{n=0}^{N_j-1} (\mu_n^{(j)}\mathcal{N}_{\s_{i\sigma_j(n)}}^{(j)} + \delta \mu_n^{(j)} \s^{(j)}_{i\sigma_j(n)})
\end{equation*}
up to higher order terms. In particular, it holds
\begin{equation*}
    \lVert  \delta \mathcal{A} \rVert_2 \lesssim  H^{-\frac{d}{2}}  (2\ell+3)^d \Theta \left( \mathcal{E}_\s + \frac{\Theta^2}{\theta} 2(\ell+1)(2 \ell+3)^d\mathcal{E}_\mathfrak{M} \right),
\end{equation*} 
where $\mathcal{E}_\s$ and $\mathcal{E}_\mathfrak{M}$ are defined in \eqref{eq:maxNoise}.
\label{lemma:sensitivity}
\end{lemma}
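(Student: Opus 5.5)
We start from the entry formula of \cref{algo:4}, $\widehat{\mathcal{A}}_{ji} = -\sum_{n}\widehat{\mu}^{(j)}_n\widehat{\s}^{(j)}_{i\sigma_j(n)}$, and insert $\widehat{\mu}^{(j)}=\mu^{(j)}+\delta\mu^{(j)}$ from \eqref{eq:MmuPerturbed} together with $\widehat{\s}^{(j)}_{ik}=\s^{(j)}_{ik}+\mathcal{N}^{(j)}_{\s_{ik}}$. Expanding the product, the zeroth-order term is $\mathcal{A}_{ji}$. The first-order terms are $\mu_n^{(j)}\mathcal{N}_{\s}$ and $\delta\mu_n^{(j)}\s$, and the product $\delta\mu\,\mathcal{N}_\s$ is of higher order and is discarded. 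This yields the stated first-order formula for $\delta\mathcal{A}_{ji}$; the overall sign is immaterial for the norm bound.

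For the norm bound, we note that $\delta\mathcal{A}$ has at most $N_j\lesssim(2\ell+3)^d$ nonzero entries per row, because $i$ ranges over the vertices of the patch $\nei^\ell(\omega_j)$. By the symmetric locality of patches, the same bound holds per column. We therefore estimate $\|\delta\mathcal{A}\|_2\le\sqrt{\|\delta\mathcal{A}\|_1\|\delta\mathcal{A}\|_\infty}\lesssim(2\ell+3)^d\max_{j,i}|\delta\mathcal{A}_{ji}|$. For each entry, Cauchy--Schwarz over $n$ gives
\[
|\delta\mathcal{A}_{ji}|\le \|\mu^{(j)}\|_2\sqrt{N_j}\,\mathcal{E}_\s+\|\delta\mu^{(j)}\|_2\,\|\s^{(j)}_{i\sigma_j(\cdot)}\|_2 .
\]
The remaining task is to bound four ingredients: $\|\mu^{(j)}\|_2$, $\|\delta\mu^{(j)}\|_2$, $|\s^{(j)}_{ik}|$, and $\|(\mathfrak{M}^{(j)})^{-1}\|_2$.

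\emph{Bound on $\s^{(j)}_{ik}$.} By continuity and \eqref{eq:EstimateHat}, \eqref{eq:EstimateLq},
\[
|\s^{(j)}_{ik}|\le\Theta\|\phi_i\|_V\|\mathcal{L}_j^{-*}\qq_k\|_V\lesssim\Theta\theta^{-1}(\ell+1).
\]

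\emph{Bound on $\mu^{(j)}$.} Since $\mu^{(j)}=-(\mathfrak{M}^{(j)})^{-1}e_j$, we need a lower bound on the spectrum of the SPD matrix $\mathfrak{M}^{(j)}=\mathcal{Q}_j\mathcal{L}_j^{-*}\mathcal{Q}_j^*$. For $y\in\mathbb{R}^{N_j}$ we have $y^\top\mathfrak{M}^{(j)}y=\langle\mathcal{Q}_j^*y,\mathcal{L}_j^{-*}\mathcal{Q}_j^*y\rangle\ge\Theta^{-1}\|\mathcal{Q}_j^*y\|_{V'}^2$. It then remains to bound $\|\sum_n y_n\qq_{\sigma_j(n)}\|_{V'}$ from below by testing with the coarse function $v=\sum_n y_n\phi_{\sigma_j(n)}\cdot(\phi,1)$-scaled. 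Using the $L^2$ mass-matrix equivalence together with an inverse inequality for $V_H$, we get $\|\mathcal{Q}_j^*y\|_{V'}\gtrsim H^{1-d/2}\|y\|_2$ up to constants. Hence
\[
\|(\mathfrak{M}^{(j)})^{-1}\|_2\lesssim\Theta H^{d-2}
\quad\text{and}\quad
\|\mu^{(j)}\|_2\lesssim\Theta H^{d-2}.
\]

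\emph{Bound on $\delta\mu^{(j)}$.} To first order, $\delta\mu^{(j)}\approx-(\mathfrak{M}^{(j)})^{-1}\mathcal{N}^{(j)}_\mathfrak{M}\mu^{(j)}$, using the reminders of \cref{sec:Reminder}. With $\|\mathcal{N}_\mathfrak{M}^{(j)}\|_2\le N_j\mathcal{E}_\mathfrak{M}$, this gives $\|\delta\mu^{(j)}\|_2\lesssim\Theta^2H^{2(d-2)}N_j\mathcal{E}_\mathfrak{M}$.

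Collecting powers of $H$, $\ell$, $\Theta$ and $\theta$ should produce the claimed form. The main obstacle is the bookkeeping of the $H$-scalings: one must ensure that the normalization of $\qq_k$ (a weighted mean, so $\qq_k(\phi_k)=O(1)$) and of $\mathfrak{M}^{(j)}$ combine into the stated $H^{-d/2}$ prefactor and the $\Theta^2/\theta$ factor. The step I expect to be delicate is the lower spectral bound on $\mathfrak{M}^{(j)}$, which requires the $V'$-norm of combinations of $\qq_k$. If the inverse-inequality approach gives the wrong power, I would instead bound $\|\mu^{(j)}\|$ directly via the saddle point problem \eqref{eq:localProb}: there $\mu^{(j)}$ represents $-\mathcal{L}_j^*\tilde\phi_j^\ell$ restricted to coarse test functions, and stability $\|\tilde\phi_j^\ell\|_V\lesssim H^{d/2-1}$ yields $\|\mu^{(j)}\|$ in terms of $\Theta$ and $H$ without inverting $\mathfrak{M}^{(j)}$ explicitly.
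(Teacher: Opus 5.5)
You have a gap in the final assembly step; everything before it is sound. Your per-entry analysis follows the paper's skeleton: the same expansion with the product $\delta\mu^{(j)}_n\mathcal{N}^{(j)}_{\s}$ dropped, Cauchy--Schwarz in $n$, the first-order perturbation bound for $\delta\mu^{(j)}$, and continuity with \eqref{eq:EstimateHat}, \eqref{eq:EstimateLq} for $\s^{(j)}_{ik}$. Your bound $\|(\mathfrak{M}^{(j)})^{-1}\|_2\lesssim\Theta H^{d-2}$ is a sharper, justified version of the paper's bare assertion $\|(\mathfrak{M}^{(j)})^{-1}\|_2\lesssim\Theta_j$. The two agree for $d=2$, and yours is smaller by a factor $H$ for $d=3$. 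One technical point: for vertices on $\partial\nei^\ell(\omega_j)$, the function $\phi_{\sigma_j(n)}$ is not in $V(\nei^\ell(\omega_j))$. Your test function therefore has to be made admissible, e.g.\ by multiplying elementwise with bubble functions, which is routine. The saddle-point fallback is unnecessary.

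The prefactor $H^{-d/2}$ in the statement does not come from any scaling of $\qq_k$ or $\mathfrak{M}^{(j)}$; in the paper it is the square root of the number of rows. There are about $H^{-d}$ rows, each with at most $(2\ell+3)^d$ nonzeros, so the paper uses the Frobenius-type count $\|\delta\mathcal{A}\|_2\le\|\delta\mathcal{A}\|_F\lesssim(H^{-d}(2\ell+3)^d)^{1/2}\max_{j,i}|\delta\mathcal{A}_{ji}|$. Your row/column-sum estimate $\|\delta\mathcal{A}\|_2\le(\|\delta\mathcal{A}\|_1\|\delta\mathcal{A}\|_\infty)^{1/2}$ discards the row count. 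With your ingredients it yields
\[
\|\delta\mathcal{A}\|_2\lesssim(2\ell+3)^{\frac{3d}{2}}\,\Theta H^{d-2}\,\mathcal{E}_{\s}+(2\ell+3)^{\frac{5d}{2}}(\ell+1)\,\frac{\Theta^3}{\theta}\,H^{2(d-2)}\,\mathcal{E}_{\mathfrak{M}}.
\]
This carries an extra factor $(2\ell+3)^{d/2}$ in both terms and no $H^{-d/2}$, so "collecting powers" cannot produce the claimed form. It implies the statement only with an additional input such as $(\ell+1)H\lesssim 1$ (patch no larger than the domain) together with $d\ge 2$, and you never invoke this. The direct repair is to use the Frobenius count instead. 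Your entry bounds then give $\|\delta\mathcal{A}\|_2\lesssim H^{-d/2}(2\ell+3)^d\Theta\bigl(H^{d-2}\mathcal{E}_{\s}+H^{2(d-2)}\frac{\Theta^2}{\theta}(\ell+1)(2\ell+3)^d\mathcal{E}_{\mathfrak{M}}\bigr)$. Since $H^{d-2}\le 1$ for $d\in\{2,3\}$ (the setting of \cref{sec.Quantum}), this is exactly the stated estimate.
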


\begin{proof}
First, the perturbed stiffness matrix $\widehat{\mathcal{A}}$ is defined by its entries for $i,j \in \N_H$ as 
\begin{equation*}
\begin{aligned}
   \widehat{\mathcal{A}}_{ji} &= - \sum_{n=0}^{N_j-1} \widehat{\mu}_n^{(j)} \widehat{\s}^{(j)}_{i\sigma_j(n)}\\
   &=   - \sum_{n=0}^{N_j-1} (\mu_n^{(j)}  + \delta \mu^{(j)}_n)(\s^{(j)}_{i\sigma_j(n)} +\mathcal{N}_{\s_{i\sigma_j(n)}}^{(j)}) \\
   &= -  \sum_{n=0}^{N_j-1} (\mu^{(j)}_n \s_{i\sigma_j(n)}^{(j)} + \mu^{(j)}_n \mathcal{N}_{\s_{i\sigma_j(n)}}^{(j)} + \delta \mu^{(j)}_n  \s^{(j)}_{i\sigma_j(n)}) \\
       &= \mathcal{A}_{ji} + \delta  \mathcal{A}_{ji},
\end{aligned}
\end{equation*}
ignoring the second order terms $\delta \mu_n^{(j)} \mathcal{N}_{\mathfrak{a}_{i\sigma_j(n)}}^{(j)}$ for the noise and with
\begin{equation*}
    \delta \mathcal{A}_{ji} = - \sum_{n=0}^{N_j-1} (\mu^{(j)}_n \mathcal{N}_{\s_{i\sigma_j(n)}}^{(j)} + \delta \mu^{(j)}_n \s^{(j)}_{i\sigma_j(n)}).
\end{equation*}
Now, we propose to get a bound for $\delta \mathcal{A}_{ji}$. First, using the triangle inequality, and then, the Cauchy--Schwarz inequality, it follows
\begin{equation}
\lvert \delta \mathcal{A}_{ji}\rvert \leq \lVert\mu^{(j)} \rVert_2 \Big(\sum_{n=0}^{N_j-1} \lvert \mathcal{N}_{\s_{i\sigma_j(n)}}^{(j)} \rvert^2\Big)^{\frac{1}{2}} + \lVert \delta \mu^{(j)} \rVert_2 \Big(\sum_{n=0}^{N_j-1} \lvert \s^{(j)}_{i\sigma_j(n)} \rvert^2\Big)^{\frac{1}{2}}.
\label{eq:in1}
\end{equation}
Next, we bound each term appearing in the right-hand side of \eqref{eq:in1}. 

First, we bound $\|\delta \mu^{(j)}\|_2$. It turns out with our choice of quantities of interest \eqref{eq:QoIHat}, that $\mathfrak{M}^{(j)}$ is given by the composition of the normalized mass matrix with $\mathcal{L}^{-*}_j$, and then, using \eqref{eq:Ljbound}, it follows
\begin{equation*}
    \lVert (\mathfrak{M}^{(j)})^{-1} \rVert_2  \lesssim  \Theta_j.
\end{equation*}
Considering the perturbed system \eqref{eq:MmuPerturbed}, assuming that the measurement noise is sufficiently small, it follows with \eqref{eq:firstorder} that 
\begin{equation*}
    \frac{\lVert \delta \mu^{(j)} \rVert_2 }{\lVert\mu^{(j)}\rVert_2} \approx \kappa \big(\mathfrak{M}^{(j)}\big) \frac{\lVert \mathcal{N}^{(j)}_{\mathfrak{M}} \rVert_2}{\lVert \mathfrak{M}^{(j)}\rVert_2}.
\end{equation*}
Since $\mu^{(j)}=-(\mathfrak{M}^{(j)})^{-1} e_j$, we have 
\begin{equation}
     \lVert\mu^{(j)} \rVert_2 \leq  \lVert \big(\mathfrak{M}^{(j)}\big)^{-1} \rVert_2   \lVert e_j \rVert_2 \leq \lVert \big(\mathfrak{M}^{(j)}\big)^{-1} \rVert_2 \leq  \Theta_j,
    \label{eq:ineqa}
\end{equation}
and then
\begin{equation}
  \lVert \delta \mu^{(j)} \rVert_2 \leq   \Theta_j^2  \lVert \mathcal{N}^{(j)}_{\mathfrak{M}}\rVert_2,
 \label{eq:ineqb}
\end{equation}
given that $\kappa (\mathfrak{M}^{(j)})=\lVert \mathfrak{M}^{(j)}\rVert_2\lVert (\mathfrak{M}^{(j)})^{-1}\rVert_2$. Additionally,
\begin{equation}
    \lVert \mathcal{N}^{(j)}_{\mathfrak{M}}\rVert_2 \leq (2\ell+3)^d \mathcal{E}_\mathfrak{M}.
 \label{eq:ineqc} 
\end{equation}
where $(2\ell+3)^d$ corresponds to the number of nodes in $\nei^\ell(\omega_j)$. 
Turning to the last term of~\eqref{eq:in1}, we have, using estimates \eqref{eq:EstimateHat} and \eqref{eq:EstimateLq}, 
\begin{align*}
     \lvert a(\phi_i, \mathcal{L}^{-*}_j\qq_k) \rvert & \leq   \Theta \lVert \phi_i \rVert_{V,\nei^\ell(\omega_j)} \ \lVert \mathcal{L}^{-*}_j\qq_k \rVert_{V,\nei^\ell(\omega_j)} \\
   & \lesssim \Theta  H^{\frac{d}{2}-1} \left( \frac{2 (\ell+1)}{\theta_j}H^{1-\frac{d}{2}} \right)\\
& \lesssim  \frac{ 2(\ell+1) \Theta}{\theta_j},
\end{align*}
which leads to 
\begin{equation}
\begin{aligned}
\displaystyle
   \Big( \sum_{n=0}^{N_j-1} \lvert \s^{(j)}_{i\sigma_j(n)} \rvert^2 \Big)^{\frac{1}{2}}&= \Big( \sum_{k\in \N_H^{(j)}} \lvert a(\phi_i, \mathcal{L}^{-*}_j\qq_k) \rvert^2 \Big)^{\frac{1}{2}}\\
   & \lesssim \Big( \sum_{k \in\N_H^{(j)}} \left(\frac{ 2(\ell+1) \Theta}{\theta_j}\right)^2 \Big)^{\frac{1}{2}}\\   
   & \lesssim (2\ell+3)^{\frac{d}{2}} \left(\frac{ 2(\ell+1) \Theta}{\theta_j}\right).
\end{aligned}
 \label{eq:ineqd}
\end{equation}
Finally, the remaining term of \eqref{eq:in1} is simply bounded by
\begin{equation}
   \Big(\sum_{n=0}^{N_j-1} \lvert \mathcal{N}_{\s_{i\sigma_j(n)}}^{(j)} \rvert^2\Big)^{\frac{1}{2}} \leq (2\ell+3)^{\frac{d}{2}} \mathcal{E}_\s. 
 \label{eq:ineqe}
\end{equation}
Gathering the bounds \eqref{eq:ineqa}, \eqref{eq:ineqb}, \eqref{eq:ineqc}, \eqref{eq:ineqd} and \eqref{eq:ineqe}, it follows
\begin{equation*}
    \lvert \delta \mathcal{A}_{ji}\rvert \lesssim \Theta_j(2\ell+3)^{\frac{d}{2}}  \mathcal{E}_\s + \Theta_j^2  \frac{\Theta}{\theta_{j}} 2(\ell+1) (2\ell+3)^{\frac{d}{2}} (2 \ell+3)^d\mathcal{E}_\mathfrak{M}. 
\end{equation*}
At the end, taking the bounds of the $\theta_j$ and $\Theta_j$, it follows
\begin{equation*}
    \lVert  \delta \mathcal{A} \rVert_2 \lesssim (2\ell+3)^{\frac{d}{2}}H^{-\frac{d}{2}}  \left(\Theta (2\ell+3)^{\frac{d}{2}} \mathcal{E}_\s +   \frac{\Theta^3}{\theta} 2(\ell+1)  (2 \ell+3)^{\frac{3d}{2}}\mathcal{E}_\mathfrak{M} \right),
\end{equation*}
since the number of rows of the stiffness matrix $\mathcal{A}$, and thus $\delta\mathcal{A}$, is about $H^{-d}$, and each row has at most $(2\ell+3)^d$ non-zero entries.
\end{proof}

\begin{remark}
We note that the bound obtained bound on $\delta \mathcal{A}$ is pessimistic. Under the assumption that the noise terms are independent and normally distributed with zero mean and variance $\sigma^2$, the quantities $\mathcal{E}_\s$ and $\mathcal{E}_\mathfrak{M}$ may be replaced by their characteristic scale $\sigma$.
\end{remark}

If $\delta \mathcal{A}$ is small enough so that $\mathcal{A} + \delta \mathcal{A}$ remains invertible, then using \eqref{eq:firstorder} and noting that the LOD stiffness matrix has a condition number $\kappa(\mathcal{A}) \approx H^{-2}$, we obtain
\begin{equation}
\frac{\lVert \delta u_H \rVert_2}{\lVert u_H \rVert_2}  \approx H^{-2} \frac{\lVert \delta \mathcal{A}\rVert_2}{\lVert\mathcal{A}\rVert_2}.
\label{eq:ErrCond}
\end{equation}
In particular, \Cref{eq:ErrorMeas} and \cref{lemma:sensitivity} guarantee that $\delta \mathcal{A}$ can be made arbitrarily small.
This estimate suggests that, as the coarse mesh size decreases, the influence of quantum noise becomes more pronounced. As a result, achieving the same level of accuracy on the macro-scale solution necessitates a higher measurement precision.

\section{Numerical Results and Discussions}
\label{sec.num}
In this section, we evaluate the proposed hybrid procedure on two-dimensional problems.  We recall we are considering the following problem. 
Let $\Omega = [0,1]^2$. We seek $u$ such that
\begin{equation*}
    - {\rm div}(\mathfrak{A} \nabla u) = f \quad {\rm in} \ \Omega, \quad u =0  \ {\rm on} \ \partial \Omega.
\end{equation*}

\subsection{Settings}
We implement a classical simulation of the quantum solver described in \cref{sec.QuantumSolver}, using Python. For this simulation, the normalization~$\gamma$ of each measured quantity is computed exactly according to \Cref{thm:quantum-fem,lem:inversion,lem:multiplication}, along with the encoded scalar. Subsequently, Monte-Carlo amplitude estimation is simulated, which approximates the measured quantities as an average of $N_\text{samples} \in \mathbb{N}$ binomial samples. The accuracy of this approximation scales like $\tol \approx \gamma (N_\text{samples})^{-\frac{1}{2}}$. We choose this Monte-Carlo sampling, since the resulting random variables are easy to characterize, but recall from \Cref{lem:amplitude-estimation} that there exist estimation methods with better asymptotic behavior. 
\medskip

We denote by $u_H^\ell$ the approximated solutions.  
For the sake of comparison, we compute reference solutions, denoted $u_{\rm ref}$, using usual $Q_1$ finite element method on a fine discretization of~$\Omega$ in squares of size $h=2^{-10}$. Then, we compare the quantities of interest $q_i(u_H^\ell)$ with $q_i(u_{\rm ref})$ for each interior nodes of the coarse mesh, where we recall that $q_i$ is defined as \eqref{eq:QoIHat0}, by computing the following relative error, 
\begin{equation}
    \mathcal{E}_\mathcal{Q}= \frac{\max_{i\in \mathcal{N}_H}\lvert q_i(u_{\rm ref})-q_i(u_H^\ell)\rvert}{\max_{i\in \mathcal{N}_H} \lvert q_i(u_{\rm ref}) \rvert} = \frac{\max_{i\in \mathcal{N}_H}\lvert q_i(u_{\rm ref})-U_i\rvert}{\max_{i\in \mathcal{N}_H} \lvert q_i(u_{\rm ref}) \rvert},
\label{eq:err}
\end{equation}
where $U_i$ is the $i$th entry of the solution vector.

\subsection{Test cases}

We consider two test cases.  In the first test case, we consider a periodic diffusion coefficient given by 
\begin{equation*}
    \mathfrak{A}_1(x,y)=2\left(2+\sin\left(2\pi \frac{x}{\varepsilon}\right)\sin\left(2\pi \frac{y}{\varepsilon}\right)\right)^{-1} \mathbf{I}_2,
\end{equation*}
with $\varepsilon=2^{-8}$, and \(\mathbf{I}_2\) denotes the \(2\times2\) identity matrix. A representation of this diffusion coefficient is given in \cref{fig:A_test_1}. The right-hand side is chosen as $f(x,y)=2\pi^2\sin(\pi x)\sin(\pi y)$.  In the second test case, we consider a non-periodic diffusion coefficient given by 
\begin{equation*}
    \mathfrak{A}_2(x,y)= \left(1+10^{-8} + \frac{1}{2} \sin \left( \lfloor x+y \rfloor +  \left \lfloor \frac{x}{\varepsilon} \right\rfloor + \left \lfloor \frac{y}{\varepsilon} \right \rfloor \right) 
    + \frac{1}{2} \cos \left( \lfloor x-y \rfloor +  \left \lfloor \frac{x}{\varepsilon} \right \rfloor + \left \lfloor \frac{y}{\varepsilon} \right \rfloor \right) \right)\mathbf{I}_2,
\end{equation*}
with $\varepsilon=2^{-7}$.  A representation of this diffusion coefficient is given in \cref{fig:A_test_2}. The right-hand side is chosen as  $f(x,y)=1$. For Test Case 1, we compute the LOD approximation for coarse mesh sizes $H \in \{2^{-3},2^{-4}, 2^{-5}, 2^{-6},2^{-7} \}$, ensuring that $H \gg \varepsilon$, and for $\ell \in \{ 2,3,4 \}$.  For Test Case 2, we consider $H \in \{2^{-3},2^{-4}, 2^{-5}, 2^{-6} \}$, with the same values of $\ell$, again maintaining $H \gg \varepsilon$. In both cases, configurations for which the patches $\nei^\ell(\omega_j)$ is larger than $\Omega$ are discarded (i.e., for which $2(\ell+1)H>1$). For each considered case, we use a local fine mesh with size  $h=2^{-10}$, which is sufficiently small to capture the oscillations of the diffusion coefficient $\mathfrak{A}_1$ or~$\mathfrak{A}_2$.

\begin{figure}[htbp]
    \centering

    \begin{subfigure}{0.45\linewidth}
        \centering
        \includegraphics[width=0.8\linewidth]{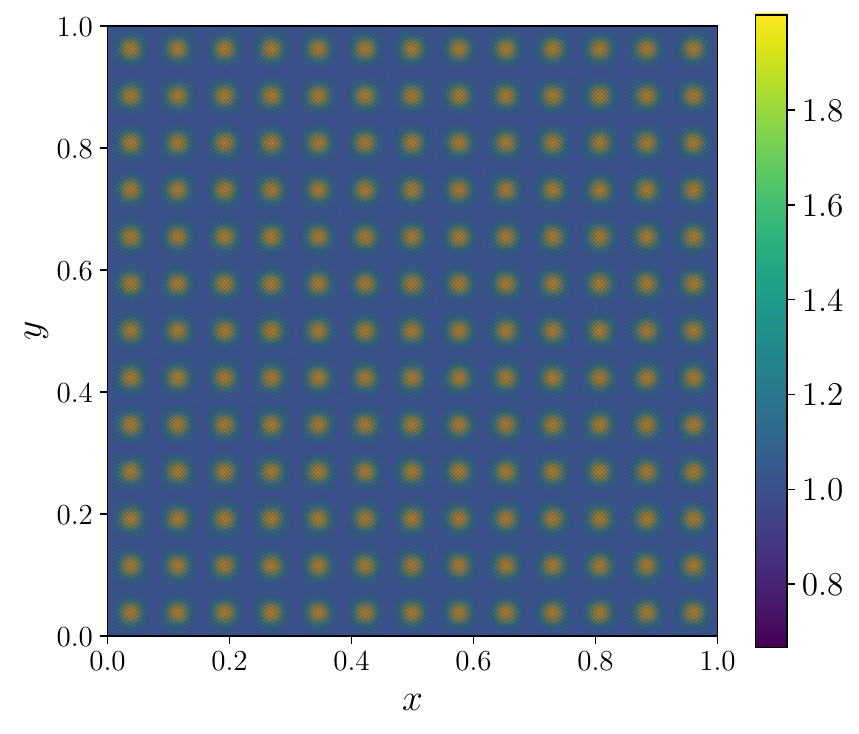}
        \caption{Diffusion coefficient $\mathfrak{A}_1$ of Test Case 1.}
        \label{fig:A_test_1}
    \end{subfigure}
    \hfill
    \begin{subfigure}{0.45\linewidth}
        \centering
        \includegraphics[width=0.8\linewidth]{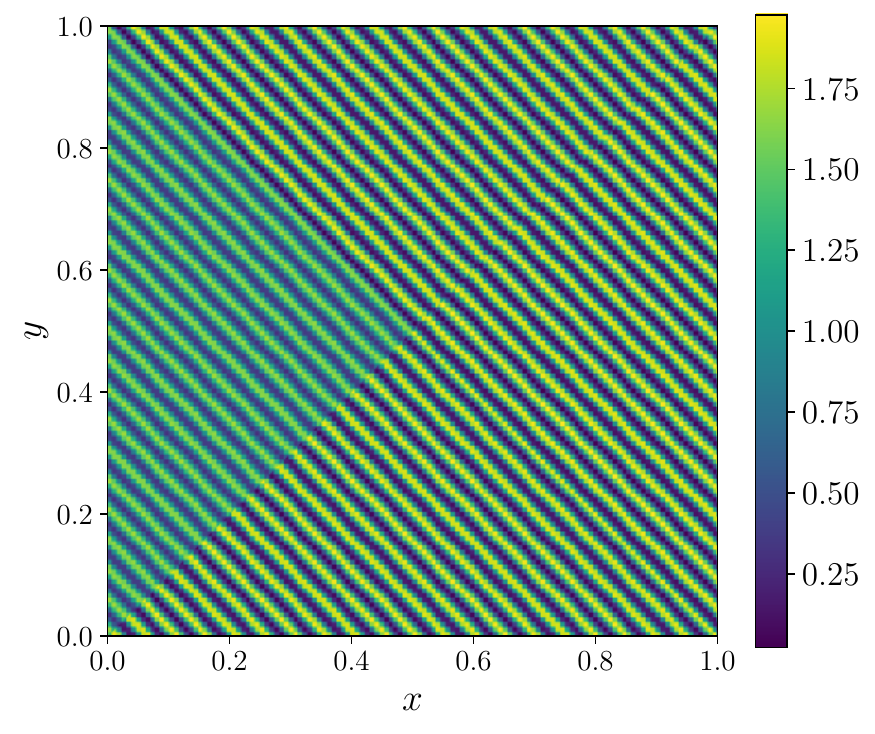}
        \caption{Diffusion coefficient $\mathfrak{A}_2$ of Test Case 2.}
        \label{fig:A_test_2}
    \end{subfigure}

    \caption{Diffusion coefficients for the two test cases.}
    \label{fig:coefficients}
\end{figure}

\subsection{Results}

We recall that, within our approach, the objective is to approximate certain quantities of interest. In \cref{fig:Error1,fig:Error2}, we compare the approximations obtained with our method to the corresponding quantities of interest extracted from reference solutions.

\begin{figure}[htpb]
\centering	
\begin{tikzpicture}

\begin{loglogaxis}[
  name = plot1, 
   at={(0.\textwidth,0.\textwidth)},
    width=0.3\textwidth,
    xmin=0.0078125, xmax=0.125,
    ymin=6e-3, ymax=1,
    xlabel={Coarse element size $H$},
    ylabel={Relative Error $\mathcal{E}_\mathcal{Q}$},
    ylabel near ticks,
    yticklabel pos=left,
    tick align=outside,
    tick pos=left,
    grid=both,
    minor y tick num=9,
    minor grid style={dotted, black!30},
    major grid style={line width=0.5pt, draw=black!50, dashed},
    legend style={at={(1.05,1)}, anchor=north west, fill=none},
    enlargelimits=true,
    xtick={0.0078125,0.015625,0.03125,0.0625,0.125},
    xticklabels={$2^{-7}$,$2^{-6}$,$2^{-5}$,$2^{-4}$,$2^{-3}$},
    ylabel style={yshift=-5pt}, % move closer to y-axis ticks
    xlabel style={yshift=-5pt}  % move farther from x-axis ticks
]

%-----------------------------
% Deterministic FE REV
%-----------------------------
\addplot[
    color=unia-pink,
    line width=1.5pt,
    mark=square,
    mark size=2pt,
    every mark/.append style={solid},
] table[x=H, y=err_mean]{dat_files_Test1/err_l2_samples0.dat};

%-----------------------------
% Stochastic delta_1
%-----------------------------
\addplot[
    color=unia-orange,
    line width=1.5pt,
    mark=square,
    mark size=2pt,
    every mark/.append style={solid, fill=gray},
    error bars/.cd,
        y dir=both,
        y explicit
] table[
    x=H,
    y=err_mean,
    y error=err_std
]{dat_files_Test1/err_l2_samples_1e8.dat};

%-----------------------------
% Stochastic delta_2 with variable ±std error bars
%-----------------------------
\addplot[
    color=unia-lightblue,
    line width=1.5pt,
    mark=square,
    mark size=2pt,
    every mark/.append style={solid, fill=gray},
    error bars/.cd,
        y dir=both,
        y explicit
] table[
    x=H,
    y=err_mean,
    y error=err_std
]{dat_files_Test1/err_l2_samples_1e9.dat};

%-----------------------------
% Legend
%-----------------------------
\end{loglogaxis}

\node at (plot1.south) [yshift=-45pt] {(a) $\ell = 2$};

\begin{loglogaxis}[
    name=plot2,
    at={(0.33\textwidth,0.\textwidth)},
    width=0.3\textwidth,
    xmin=0.0078125, xmax=0.125,
    ymin=6e-3, ymax=1,
    xlabel={Coarse element size $H$},
    ylabel={Relative Error $\mathcal{E}_\mathcal{Q}$},
    ylabel near ticks,
    yticklabel pos=left,
    tick align=outside,
    tick pos=left,
    grid=both,
    minor y tick num=9,
    minor grid style={dotted, black!30},
    major grid style={line width=0.5pt, draw=black!50, dashed},
    legend style={at={(1.05,1)}, anchor=north west, fill=none},
    enlargelimits=true,
    xtick={0.0078125,0.015625,0.03125,0.0625,0.125},
    xticklabels={$2^{-7}$,$2^{-6}$,$2^{-5}$,$2^{-4}$,$2^{-3}$},
     ylabel style={yshift=-5pt}, % move closer to y-axis ticks
    xlabel style={yshift=-5pt}  % move farther from x-axis ticks
]

%-----------------------------
% Deterministic FE REV
%-----------------------------
\addplot[
    color=unia-pink,
    line width=1.5pt,
    mark=square,
    mark size=2pt,
    every mark/.append style={solid},
] table[x=H, y=err_mean]{dat_files_Test1/err_l3_samples0.dat};

%-----------------------------
% Stochastic delta_1
%-----------------------------
\addplot[
    color=unia-orange,
    line width=1.5pt,
    mark=square,
    mark size=2pt,
    every mark/.append style={solid, fill=gray},
    error bars/.cd,
        y dir=both,
        y explicit
] table[
    x=H,
    y=err_mean,
    y error=err_std
]{dat_files_Test1/err_l3_samples_1e8.dat};

%-----------------------------
% Stochastic delta_2 with variable ±std error bars
%-----------------------------
\addplot[
    color=unia-lightblue,,
    line width=1.5pt,
    mark=square,
    mark size=2pt,
    every mark/.append style={solid, fill=gray},
    error bars/.cd,
        y dir=both,
        y explicit
] table[
    x=H,
    y=err_mean,
    y error=err_std
]{dat_files_Test1/err_l3_samples_1e9.dat};

\end{loglogaxis}

\node at (plot2.south) [yshift=-45pt] {(b) $\ell = 3$};

\begin{loglogaxis}[
    name = plot3, 
    width=0.3\textwidth,
    at={(0.66\textwidth,0\textwidth)},
    xmin=0.0078125, xmax=0.0625,
    ymin=6e-3, ymax=1,
    xlabel={Coarse element size $H$},
    ylabel={Relative Error $\mathcal{E}_\mathcal{Q}$},
    ylabel near ticks,
    yticklabel pos=left,
    tick align=outside,
    tick pos=left,
    grid=both,
    minor y tick num=9,
    minor grid style={dotted, black!30},
    major grid style={line width=0.5pt, draw=black!50, dashed},
    legend style={at={(-2.2,1.8)}, anchor=north west, fill=none},
    legend cell align=left,
    legend columns=1,
    enlargelimits=true,
    xtick={0.0078125,0.015625,0.03125,0.0625},
    xticklabels={$2^{-7}$,$2^{-6}$,$2^{-5}$,$2^{-4}$},
    ylabel style={yshift=-5pt}, % move closer to y-axis ticks
    xlabel style={yshift=-5pt}  % move farther from x-axis ticks
]
%-----------------------------
% Deterministic FE REV
%-----------------------------
\addplot[
    color=unia-pink,
    line width=1.5pt,
    mark=square,
    mark size=2pt,
    every mark/.append style={solid},
] table[x=H, y=err_mean]{dat_files_Test1/err_l4_samples0.dat};
%-----------------------------
% Stochastic delta_1
%-----------------------------
\addplot[
    color=unia-orange,
    line width=1.5pt,
    mark=square,
    mark size=2pt,
    every mark/.append style={solid, fill=gray},
    error bars/.cd,
        y dir=both,
        y explicit
] table[
    x=H,
    y=err_mean,
    y error=err_std
]{dat_files_Test1/err_l4_samples_1e8.dat};

%-----------------------------
% Stochastic delta_2 with variable ±std error bars
%-----------------------------
\addplot[
    color=unia-lightblue,
    line width=1.5pt,
    mark=square,
    mark size=2pt,
    every mark/.append style={solid, fill=gray},
    error bars/.cd,
        y dir=both,
        y explicit
] table[
    x=H,
    y=err_mean,
    y error=err_std
]{dat_files_Test1/err_l4_samples_1e9.dat};

%-----------------------------
% Legend
%-----------------------------
\legend{Classical Implementation, Quantum Impl. $N_\text{samples} = 10^{8}$, Quantum Impl. $N_\text{samples} = 10^{9}$}
\end{loglogaxis}

\node at (plot3.south) [yshift=-45pt] {(c) $\ell = 4$};

\end{tikzpicture}
\caption{Relative errors \eqref{eq:err} for Test Case 1 between the LOD approximations and a reference solution  for $\ell=2,3,4$. Results are shown for both the classical LOD and its quantum variant with varying numbers of samples. The classical implementation follows \cref{algo:2}, the quantum implementation follows \cref{algo:4}. For the quantum variant, results are averaged over six realizations of the full procedure, and the corresponding error bars are shown.}
\label{fig:Error1}
\end{figure}
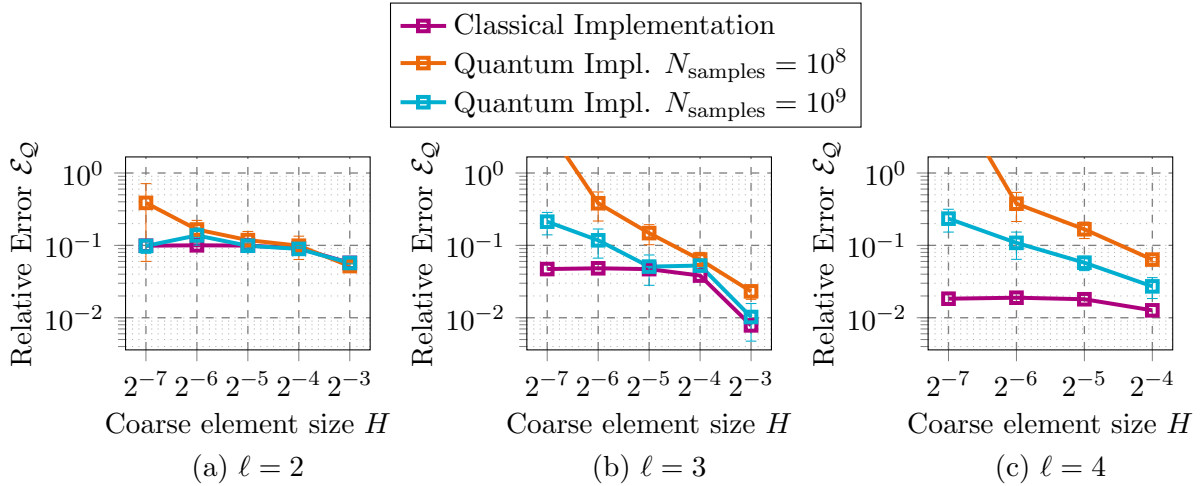

\begin{figure}[htpb]
\centering	
\begin{tikzpicture}
\begin{loglogaxis}[
    name = plot1, 
   at={(0.\textwidth,0.\textwidth)},
    width=0.3\textwidth,
    xmin=0.015625, xmax=0.125,
    ymin=5e-3, ymax=1,
    xlabel={Coarse element size $H$},
    ylabel={Relative Error $\mathcal{E}_\mathcal{Q}$},
    ylabel near ticks,
    yticklabel pos=left,
    tick align=outside,
    tick pos=left,
    grid=both,
    minor y tick num=9,
    minor grid style={dotted, black!30},
    major grid style={line width=0.5pt, draw=black!50, dashed},
    legend style={at={(1.05,1)}, anchor=north west, fill=none},
    enlargelimits=true,
    xtick={0.015625,0.03125,0.0625,0.125},
    xticklabels={$2^{-6}$,$2^{-5}$,$2^{-4}$,$2^{-3}$},
    ylabel style={yshift=-5pt}, % move closer to y-axis ticks
    xlabel style={yshift=-5pt}  % move farther from x-axis ticks
]

%-----------------------------
% Deterministic FE REV
%-----------------------------
\addplot[
    color=unia-pink,
    line width=1.5pt,
    mark=square,
    mark size=2pt,
    every mark/.append style={solid},
] table[x=H, y=err_mean]{dat_files_Test2/err_l2_samples0.dat};

%-----------------------------
% Stochastic delta_1
%-----------------------------
\addplot[
    color=unia-orange,
    line width=1.5pt,
    mark=square,
    mark size=2pt,
    every mark/.append style={solid, fill=gray},
    error bars/.cd,
        y dir=both,
        y explicit
] table[
    x=H,
    y=err_mean,
    y error=err_std
]{dat_files_Test2/err_l2_samples_1e8.dat};

%-----------------------------
% Stochastic delta_2 with variable ±std error bars
%-----------------------------
\addplot[
    color=unia-lightblue,,
    line width=1.5pt,
    mark=square,
    mark size=2pt,
    every mark/.append style={solid, fill=gray},
    error bars/.cd,
        y dir=both,
        y explicit
] table[
    x=H,
    y=err_mean,
    y error=err_std
]{dat_files_Test2/err_l2_samples_1e9.dat};

\end{loglogaxis}

\node at (plot1.south) [yshift=-45pt] {(a) $\ell = 2$};

\begin{loglogaxis}[
      name=plot2,
    at={(0.33\textwidth,0.\textwidth)},
    width=0.3\textwidth,
    xmin=0.015625, xmax=0.125,
    ymin=5e-3, ymax=1,
    xlabel={Coarse element size $H$},
    ylabel={Relative Error $\mathcal{E}_\mathcal{Q}$},
    ylabel near ticks,
    yticklabel pos=left,
    tick align=outside,
    tick pos=left,
    grid=both,
    minor y tick num=9,
    minor grid style={dotted, black!30},
    major grid style={line width=0.5pt, draw=black!50, dashed},
    legend style={at={(1.05,1)}, anchor=north west, fill=none},
    enlargelimits=true,
    xtick={0.015625,0.03125,0.0625,0.125},
    xticklabels={$2^{-6}$,$2^{-5}$,$2^{-4}$,$2^{-3}$},
    ylabel style={yshift=-5pt}, % move closer to y-axis ticks
    xlabel style={yshift=-5pt}  % move farther from x-axis ticks
]

%-----------------------------
% Deterministic FE REV
%-----------------------------
\addplot[
    color=unia-pink,
    line width=1.5pt,
    mark=square,
    mark size=2pt,
    every mark/.append style={solid},
] table[x=H, y=err_mean]{dat_files_Test2/err_l3_samples0.dat};

%-----------------------------
% Stochastic delta_1
%-----------------------------
\addplot[
    color=unia-orange,
    line width=1.5pt,
    mark=square,
    mark size=2pt,
    every mark/.append style={solid, fill=gray},
    error bars/.cd,
        y dir=both,
        y explicit
] table[
    x=H,
    y=err_mean,
    y error=err_std
]{dat_files_Test2/err_l3_samples_1e8.dat};

%-----------------------------
% Stochastic delta_2 with variable ±std error bars
%-----------------------------
\addplot[
    color=unia-lightblue,,
    line width=1.5pt,
    mark=square,
    mark size=2pt,
    every mark/.append style={solid, fill=gray},
    error bars/.cd,
        y dir=both,
        y explicit
] table[
    x=H,
    y=err_mean,
    y error=err_std
]{dat_files_Test2/err_l3_samples_1e9.dat};

\end{loglogaxis}

\node at (plot2.south) [yshift=-45pt] {(b) $\ell = 3$};

\begin{loglogaxis}[
    name = plot3, 
    width=0.3\textwidth,
    at={(0.66\textwidth,0\textwidth)},
    xmin=0.015625, xmax=0.0625,
    ymin=5e-3, ymax=1,
    xlabel={Coarse element size $H$},
    ylabel={Relative Error $\mathcal{E}_\mathcal{Q}$},
    ylabel near ticks,
    yticklabel pos=left,
    tick align=outside,
    tick pos=left,
    grid=both,
    minor y tick num=9,
    minor grid style={dotted, black!30},
    major grid style={line width=0.5pt, draw=black!50, dashed},
    legend style={at={(-2.2,1.8)}, anchor=north west, fill=none},
    legend cell align=left,
    enlargelimits=true,
    xtick={0.015625,0.03125,0.0625},
    xticklabels={$2^{-6}$,$2^{-5}$,$2^{-4}$},
    ylabel style={yshift=-5pt}, % move closer to y-axis ticks
    xlabel style={yshift=-5pt}  % move farther from x-axis ticks
]

%-----------------------------
% Deterministic FE REV
%-----------------------------
\addplot[
    color=unia-pink,
    line width=1.5pt,
    mark=square,
    mark size=2pt,
    every mark/.append style={solid},
] table[x=H, y=err_mean]{dat_files_Test2/err_l4_samples0.dat};

%-----------------------------
% Stochastic delta_1
%-----------------------------
\addplot[
    color=unia-orange,
    line width=1.5pt,
    mark=square,
    mark size=2pt,
    every mark/.append style={solid, fill=gray},
    error bars/.cd,
        y dir=both,
        y explicit
] table[
    x=H,
    y=err_mean,
    y error=err_std
]{dat_files_Test2/err_l4_samples_1e8.dat};

%-----------------------------
% Stochastic delta_2 with variable ±std error bars
%-----------------------------
\addplot[
    color=unia-lightblue,
    line width=1.5pt,
    mark=square,
    mark size=2pt,
    every mark/.append style={solid, fill=gray},
    error bars/.cd,
        y dir=both,
        y explicit
] table[
    x=H,
    y=err_mean,
    y error=err_std
]{dat_files_Test2/err_l4_samples_1e9.dat};

%-----------------------------
% Legend
%-----------------------------
\legend{Classical Implementation, Quantum Impl. $N_\text{samples} = 10^{8}$, Quantum Impl. $N_\text{samples} = 10^{9}$}
\end{loglogaxis}

\node at (plot3.south) [yshift=-45pt] {(c) $\ell = 4$};

\end{tikzpicture}
\caption{Relative errors \eqref{eq:err} for Test Case 2 between the LOD approximations and a reference solution  for $\ell=2,3,4$. Results are shown for both the classical LOD and its quantum variant with varying numbers of samples. The classical implementation follows \cref{algo:2}, the quantum implementation follows \cref{algo:4}. For the quantum variant, results are averaged over six realizations of the full procedure, and the corresponding error bars are shown.}
\label{fig:Error2}
\end{figure}

First, as shown in \cref{fig:Error1,fig:Error2}, the results from the classical LOD implementation exhibit the expected behavior for this choice of quantities of interest. Recall that this corresponds to the simplest variant of the LOD method (see, e.g., \cite{Moritz22}) and is consistent with the error estimations of \cref{thm:PGLOD-error}. The error is characterized by an initial increase as the coarse mesh size decreases, followed by a stagnation. Also, we observe the exponential decays of the error as~$\ell$ increases. However, the key aspect to analyze here is the deviation in the results between the quantum and classical approaches. For both test cases, we observe, that the error behavior is consistent with the findings in \cref{sec.Sens}, namely that the influence of quantum noise increases as $H$ decreases (i.e., the number of coarse patches increases) and as $\ell$ increases. Moreover, comparing the sample sizes $10^8$ and $10^9$ demonstrates that the number of samples, and hence the measurement accuracy, is critical for the overall performance of the method. \Cref{tab:tabError} reports the average relative measurement error for different sample sizes. \Cref{fig:Error1,fig:Error2} show that the macro-scale scheme amplifies quantum errors, in agreement with \eqref{eq:ErrCond}, emphasizing the need for sufficiently accurate measurements in the hybrid approach. As the number of measurements increases, the quantum results converge to the classical solution. Notably, even with $10^9$ samples corresponding to a relative measurement accuracy of around $10^{-4.5}$, we already obtain a good approximation.

\begin{table}[htbp]
\centering
\begin{tabular}{@{}c|cc|cc@{}}
 & \multicolumn{2}{c|}{Test Case 1} & \multicolumn{2}{c}{Test Case 2} \\ 
\midrule
Number of samples & $10^8$ & $10^9$ & $10^8$ & $10^9$ \\ 
\midrule
Avg. Relative Error - Measurement $\widehat{\s}$ & $9.7 \times 10^{-5}$ & $3.1 \times 10^{-5}$ & $1.1 \times 10^{-4}$ & $3.5 \times 10^{-5}$ \\ 
Avg. Relative Error - Measurement $\widehat{\mathfrak{m}}$ & $3.8 \times 10^{-5}$ & $1.2 \times 10^{-5}$ & $3.4 \times 10^{-5}$ & $1.1 \times 10^{-5}$ \\ 
\bottomrule
\end{tabular}
\caption{Average relative errors for measurements of the quantities $\widehat{\s}$ and $\widehat{\mathfrak{m}}$ as defined in \cref{algo:4} for Test Cases 1 and 2 with the two different numbers of samples considered.}
\label{tab:tabError}
\end{table}

\subsection{Discussion}

As shown, a major drawback of our methodology is that the macro-scale scheme amplifies quantum noise. This amplification is primarily linked to the construction of the LOD basis functions (i.e., larger patch sizes lead to increased noise) as well as to the conditioning of the global stiffness matrix. In particular, smaller values of $H$ result in a stronger amplification of the noise. Nevertheless, more advanced approaches, capable of both improving the approximation and mitigating the propagation of quantum noise, could be explored. These include improved constructions of the LOD basis functions and more effective truncation strategies. In addition, advanced variants such as the Super-Localized Orthogonal Decomposition (SLOD)~\cite{Moritz23} and the Hierarchical Super-Localized Orthogonal Decomposition (HSLOD)~\cite{Garay24} may be considered as they provide better conditioning compared to the classical method. However, these improvements come at the cost of a more complex quantum implementation, and their overall efficiency in this context remains uncertain. 

Furthermore, more sophisticated measurement strategies could be considered to improve accuracy while reducing the number of required samples \cite{Brassard02,Suzuki20}. In practice, these methods are not yet usable due to the increased circuit size, but in theory they would reduce the complexity to be linear in the inverse error tolerance.

\begin{table}
\centering
\begin{tabular}{@{}c|c|c|c@{}}
Method & Runtime (offline) & Runtime (online) & Memory \\
\midrule
Classical FEM & -- & $h^{-d} \log \tol^{-1}$ & $h^{-d}$ \\
Classical LOD & $2^d(\ell + 1)^dh^{-d} \log \tol^{-1}$ & $H^{-d} \log \tol^{-1}$ & $(H/h)^d + H^{-d}$ \\
Quantum FEM & -- & $\tol^{-1}\polylog h^{-1}$ & $d\log h^{-1}$ \\
Quantum LOD & $\operatorname{poly}(\ell, H) \tol^{-1} \polylog h^{-1}$ & $H^{-d} \log \tol^{-1}$ & $d \log (H/h) + H^{-d}$ \\ 
\bottomrule
\end{tabular}
\caption{Comparison of asymptotic complexity of different methods. Compared to the classical finite element method, the classical Localized Orthogonal Decomposition trades online runtime and memory requirements for a longer offline computation. The quantum methods trades the polynomial dependency in $h$ to a linear dependency in the error tolerance, while the required number of qubits are depend only logarithmically on the fine scale $h$. The proposed method improves over the monolithic quantum finite element approach, e.g.~\cite{Deiml25}, by moving all quantum computation offline.}
\label{tab:complexity}
\end{table}

Nonetheless, the preliminary results are already promising and suggest potential for future applications.
It must be emphasized that this quantum approach targets problems that are intractable for classical computers. Therefore, it is important to compare the complexity of this approach with that of the classical one. For this, note the asymptotic complexities given in \Cref{tab:complexity}. In particular, the quantum algorithm we proposed scales only logarithmically with the fine grid size $h$ and thus the fine-scale parameter~$\varepsilon$. This comes at the cost of a linear dependence on the measurement tolerance scaling with the overall error tolerance~$\tol$ in the runtime. For problems with small $\varepsilon$, but only limited accuracy requirements, the quantum method may then easily outperform classical approaches, at least in theory. This is even more pronounced with respect to memory requirements, where the $\tol^{-1}$ factor is not present for the quantum method, meaning that quantum algorithms might be applicable where classical methods simply cannot store the solution vector. The proposed algorithms enables the quantum computation to be outsourced offline, after which arbitrary right-hand sides can be considered. It offers a clear interface for the quantum solver, thereby addressing issues with state preparation for the right-hand sides of the quantum sub-procedure.

At present, existing quantum hardware cannot implement circuits of the size required by the proposed algorithm \cite{Deiml25}. This is mostly due to noise induced errors, which cannot be effectively corrected in current machines. Real world application of our approach thus requires further improvements in the hardware. 

\section{Conclusion}
In this research article, we have demonstrated the feasibility of a hybrid quantum--classical framework for numerical homogenization of scalar linear PDEs with rough coefficients. This hybrid approach not only overcomes the computational limitations of classical numerical homogenization, particularly the cost of computing local solutions, but also addresses some of the current limitations of quantum computing for large-scale PDEs. In this spirit, we strongly believe that combining multi-scale approaches with quantum computing is a promising path instead of solving a large PDEs with quantum but analyzing only quantities of interest defined on a coarse grid. Nevertheless, as previously discussed, several challenges must be addressed for this hybrid approach to be successfully applied to realistic, complex engineering problems.

\printbibliography

\appendix

\section{Domain extension}
\label{sec:DomainExtension}

\begin{figure}[H]
\centering	
\begin{tikzpicture}[scale=0.5]

% Grid parameters
\def\cells{12}    % number of cells per row/column
\def\size{1}      % size of one cell in cm

% Draw 12x12 grid
\draw[step=\size, line width=0.5pt, gray] (0,0) grid (\cells*\size, \cells*\size);

% Draw thicker vertical line at x=2 from y=2 to y=10
\draw[line width=2pt] (2*\size,2*\size) -- (2*\size,10*\size);
\draw[line width=2pt] (2*\size,10*\size) -- (10*\size,10*\size);
\draw[line width=2pt] (10*\size,10*\size) -- (10*\size,2*\size);
\draw[line width=2pt] (10*\size,2*\size) -- (2*\size,2*\size);

\draw[line width=1.5pt] (2*\size,0*\size) -- (2*\size,12*\size);
\draw[line width=1.5pt] (0*\size,10*\size) -- (12*\size,10*\size);
\draw[line width=1.5pt] (10*\size,12*\size) -- (10*\size,0*\size);
\draw[line width=1.5pt] (0*\size,2*\size) -- (12*\size,2*\size);

\draw[line width=1.5pt] (0*\size,0*\size) -- (0*\size,12*\size);
\draw[line width=1.5pt] (0*\size,12*\size) -- (12*\size,12*\size);
\draw[line width=1.5pt] (12*\size,12*\size) -- (12*\size,0*\size);
\draw[line width=1.5pt] (12*\size,0*\size) -- (0*\size,0*\size);

\node[black, scale=2] at (3,3) {$\Omega$};
\node[black, scale=1.5] at (1.2,1.2) {$\Omega^\ell$};

\node[black, scale=1] at (-3,1) {$\mathfrak{A}(-x_1,-x_2)$};
\draw[->, thick] (-0.5,1) -- (0.5,1);

\node[black, scale=1] at (-3,6) {$\mathfrak{A}(-x_1,x_2)$};
\draw[->, thick] (-0.5,6) -- (0.5,6);

\node[black, scale=1] at (-3,11) {$\mathfrak{A}(-x_1,2-x_2)$};
\draw[->, thick] (-0.5,11) -- (0.5,11);

\node[black, scale=1] at (6,13) {$\mathfrak{A}(x_1,2-x_2)$};
\draw[->, thick] (6,12.5) -- (6,11.5);

\node[black, scale=1] at (16,11) {$\mathfrak{A}(2-x_1,2-x_2)$};
\draw[->, thick] (12.5,11) -- (11.5,11);

\node[black, scale=1] at (15.5,6) {$\mathfrak{A}(2-x_1,x_2)$};
\draw[->, thick] (12.5,6) -- (11.5,6);

\node[black, scale=1] at (15.5,1) {$\mathfrak{A}(2-x_1,-x_2)$};
\draw[->, thick] (12.5,1) -- (11.5,1);

\node[black, scale=1] at (6,-1) {$\mathfrak{A}(x_1,-x_2)$};
\draw[->, thick] (6,-0.5) -- (6,0.5);

\node[black, scale=1] at (6,6.5) {$\mathfrak{A}(x_1,x_2)$};

\end{tikzpicture}
\caption{Extension of the domain $\Omega=[0,1]^2$ with the extended coarse mesh $\mathcal{G}_H^{\ell}$ and $\ell=2$. Additionally the extension of the diffusion coefficient $\mathfrak{A}$ is displayed (inspired from \cite{ThesisMohr}).}
\label{fig:extension}
\end{figure}
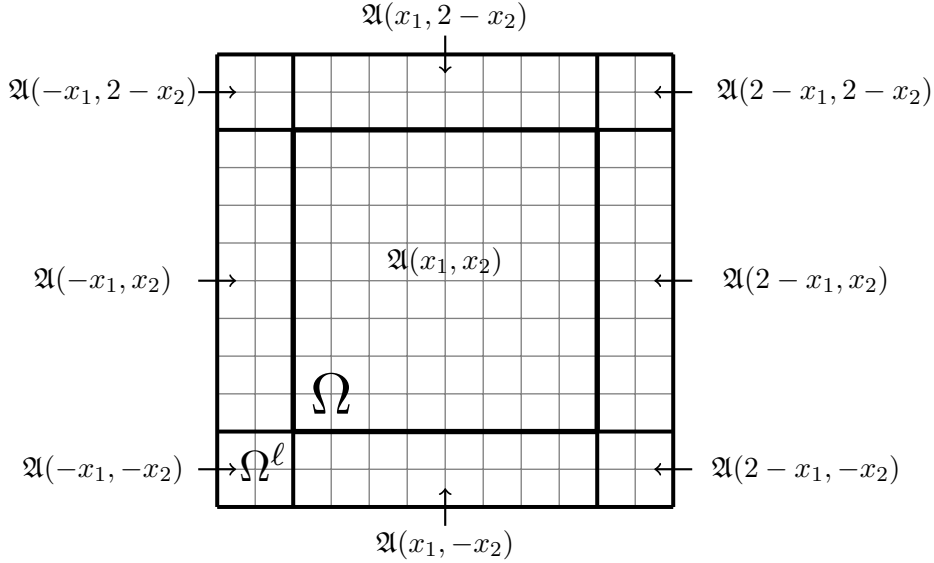

For nodes $j \in \mathcal{N}_H$ near the boundary $\partial \Omega$, the patch $\nei^\ell(\omega_j)$ may not be fully contained in the domain, i.e., 
$\nei^\ell(\omega_j) \not \subset \Omega$. In this case, for the ease of implementation, we use the technique introduced in \cite{ThesisMohr} and which allows to handle homogeneous Dirichlet boundary conditions. We extend the domain $\Omega=[0,1]^d$ by $\ell$-layers of coarse mesh elements. More precisely, we have 
\begin{equation*}
    \Omega^\ell:=\{x \in \mathbb{R}^d \ {\rm such \ that} \ \lvert x - \Omega\rvert_{\infty}<\ell H \}.
\end{equation*}
By $\mathcal{G}_H^\ell$ we denote the extension of the Cartesian coarse mesh 
$\mathcal{G}_H$ to $\Omega^\ell$. The diffusion coefficient $\mathfrak{A}$ is extended to the overlapping domain $\overline{\Omega^\ell}$ by mirroring it at the boundary of $\Omega$. See \cref{fig:extension} for an illustration in two dimensions. 
The extended diffusion coefficient is defined, in two dimensions for $(x_1,x_2) \in \mathbb{R}^2$, as 
 \begin{equation}
     \mathfrak{A}^{\rm ext} (x_1, x_2) = \mathfrak{A}(\tilde{x}_1, \tilde{x}_2)
    \label{eq:Aext}
 \end{equation}
 where
\begin{equation*}
   \tilde x_i =
\begin{cases}
x_i, & 0 \le x_i \le 1, \\
-\,x_i, & x_i < 0, \\
2 - x_i, & x_i > 1,
\end{cases}
\qquad i=1,2. 
\end{equation*}

Then, for $j \in \mathcal{N}_H$ such that $\nei^\ell(\omega_j) \not \subset \Omega$, we define the patch  $\nei^{\ell, \rm ext}(\omega_j)$ as previously, i.e., following \eqref{eq:Patch}, but using the extended coarse mesh $\mathcal{G}_H^\ell$ instead of $\mathcal{G}_H$. The local problems are then solved on $\nei^{\ell, \rm ext}(\omega_j)$, ignoring global Dirichlet boundary conditions but including the extended diffusion coefficient $\mathfrak{A}^{\rm ext}$ \eqref{eq:Aext}. 
We denote the resulting functions by $\tilde{\phi}^{\ell,\rm ext}_j \in H_0^1(\nei^{\ell, \rm ext}(\omega_j))$. To account for the global Dirichlet boundary conditions, we need to correct these functions. First, we extend
 $\tilde{\phi}^{\ell,\rm ext}_j$ by zero to $\mathbb{R}^d \setminus \Omega^\ell$. Then, in two dimensions, the functions $\tilde{\phi}^\ell_j \in H^1_0(\Omega)$ which satisfy global Dirichlet boundary conditions, are defined for $(x_1,x_2) \in \Omega$ as
 \begin{equation*}
 \begin{split}
 \tilde{\phi}^\ell_j(x_1, x_2) &=   \tilde{\phi}^{\ell,\rm ext}_j(x_1,x_2) - \tilde{\phi}^{\ell,\rm ext}_j(-x_1,x_2) - \tilde{\phi}^{\ell,\rm ext}_j(x_1,-x_2) \\
&- \tilde{\phi}^{\ell,\rm ext}_j(2-x_1,x_2) - \tilde{\phi}^{\ell,\rm ext}_j(x_1,2-x_2) + \tilde{\phi}^{\ell,\rm ext}_j(-x_1,-x_2) \\
&+ \tilde{\phi}^{\ell,\rm ext}_j(2-x_1,-x_2)
+ \tilde{\phi}^{\ell,\rm ext}_j(2-x_1,2-x_2) + \tilde{\phi}^{\ell,\rm ext}_j(-x_1,2-x_2).
 \end{split}
 \end{equation*}
As in our hybrid strategy, we only need quantum measurements. For a node $i \in \mathcal{N}_H$, defined by its coordinates $(i_1,i_2)$,  we define an extended nodal basis function  $\phi_{i=(i_1, i_2)}^{\rm ext}$ as 
 \begin{equation*}
 \begin{split}
\phi_{i=(i_1, i_2)}^{\rm ext} =&  \phi_{(i_1, i_2)} -\phi_{(-i_1, i_2)} - \phi_{(i_1, -i_2)} -\phi_{(2-i_1, i_2)} - \phi_{(i_1, 2-i_2)} \\
&+ \phi_{(-i_1, -i_2)} +\phi_{(2-i_1, -i_2)}
+ \phi_{(2-i_1, 2-i_2)} + \phi_{(-i_1, 2-i_2)}.
 \end{split}
 \end{equation*}
It follows then, for nodes $j \in \mathcal{N}_H$ near the boundary $\partial \Omega$, 
\begin{equation}
    a(\phi_i, \tilde{\phi}^\ell_j) = a(\phi_i^{\rm ext},  \tilde{\phi}_j^{\ell,\rm ext}). 
\label{eq:newhat}
\end{equation}
Then, the computation of \eqref{eq:newhat} follows the same way as in \cref{algo:2} or \cref{algo:4}. The extension of this procedure in three dimensions is straightforward.

\section{Reminder about sensitivity of linear systems}
\label{sec:Reminder}

In this section, we recall some classical results on the sensitivity of linear systems. As example, we consider the linear system
\begin{equation}
    Ax=b.
\label{eq:linsys}    
\end{equation}
 where $A\in \mathbb{R}^{n \times n}$, $n \in \mathbb{N}$, is a non-singular matrix, $b \in \mathbb{R}^n$ and the perturbed system
 \begin{equation}
    (A+\delta A)(x+\delta x)=b.
\label{eq:linsyspert}    
\end{equation}
In this section, we denote by $\lVert \, \bullet \, \rVert$ a matrix norm which is subordinate to some vector norm $\lVert \, \bullet \, \rVert$.

\begin{lemma}
    Let $B \in \mathbb{R}^{n\times n}$ such that $\lVert B \rVert<1$, then the matrix $I-B$ is non-singular, and it holds that 
    \begin{equation*}
        \lVert (I-B)^{-1} \rVert \leq \frac{1}{1-\lVert B \rVert}.
    \end{equation*}
\label{lemma:Sens1}
\end{lemma}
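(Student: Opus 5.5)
The plan is the classical Neumann series argument. Write $S_m := \sum_{k=0}^{m} B^k$. Since the norm is subordinate, it is submultiplicative, so $\lVert B^k \rVert \leq \lVert B \rVert^k$. The partial sums $S_m$ therefore form a Cauchy sequence in the finite-dimensional (hence complete) space $\mathbb{R}^{n\times n}$. Indeed, for $m>p$,
\begin{equation*}
\lVert S_m - S_p \rVert \leq \sum_{k=p+1}^{m} \lVert B \rVert^k \leq \frac{\lVert B \rVert^{p+1}}{1-\lVert B \rVert},
\end{equation*}
which tends to zero because $\lVert B \rVert<1$. Let $S := \lim_{m\to\infty} S_m$.

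The key step is the telescoping identity
\begin{equation*}
(I-B)S_m = S_m(I-B) = I - B^{m+1}.
\end{equation*}
Since $\lVert B^{m+1}\rVert \leq \lVert B\rVert^{m+1} \to 0$ and matrix multiplication is continuous, passing to the limit gives $(I-B)S = S(I-B) = I$. Hence $I-B$ is non-singular with $(I-B)^{-1} = S$. For the norm bound, use the triangle inequality on the partial sums, $\lVert S_m \rVert \leq \sum_{k=0}^m \lVert B\rVert^k \leq 1/(1-\lVert B \rVert)$, and then continuity of the norm as $m\to\infty$.

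Alternatively, non-singularity and the bound can be obtained more directly. For any $x$, the triangle inequality gives $\lVert (I-B)x \rVert \geq \lVert x \rVert - \lVert Bx\rVert \geq (1-\lVert B\rVert)\lVert x\rVert$. This shows $I-B$ is injective, hence invertible since it is a square matrix. Setting $x=(I-B)^{-1}y$ then yields $\lVert (I-B)^{-1}y\rVert \leq \lVert y\rVert/(1-\lVert B\rVert)$, and taking the supremum over $\lVert y \rVert = 1$ gives the claim, because the matrix norm is the one induced by the vector norm. I would probably present this second route, as it avoids limits entirely.

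There is no real obstacle here. The only point requiring care is that both routes rely on the norm being subordinate: the first needs submultiplicativity and $\lVert I\rVert = 1$, and the second needs $\lVert Bx\rVert \leq \lVert B\rVert\lVert x\rVert$ together with the characterization of the induced norm as a supremum. This is exactly the standing assumption stated at the beginning of \cref{sec:Reminder}.
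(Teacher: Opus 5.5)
Your proposal is correct, and the route you say you would present is essentially the paper's: non-singularity comes from the same lower bound $\lVert (I-B)y\rVert \geq (1-\lVert B\rVert)\lVert y\rVert$. The only immaterial difference is in the norm bound. The paper obtains it from $1=\lVert (I-B)^{-1}(I-B)\rVert \geq (1-\lVert B\rVert)\lVert (I-B)^{-1}\rVert$, using $\lVert I\rVert=1$ and submultiplicativity, whereas you substitute $x=(I-B)^{-1}y$ into the lower bound; your Neumann-series alternative is also valid but not needed.
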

\begin{proof}
    For every $y \in \mathbb{R}^n$ such that $y\neq 0$, 
\begin{equation*}
        \lVert (I- B)y \rVert \geq \lVert y \rVert - \lVert B \rVert \lVert y \rVert = (1 - \lVert B \rVert) \lVert y\rVert > 0.
\end{equation*}
Therefore, the linear system
\begin{equation*}
    (I-B)y = 0
\end{equation*}
has a unique solution $y=0$, and $I-B$ is non-singular.
The estimate of the norm of the inverse of $I-B$ follows from
\begin{align*}
    1 &= \lVert (I-B)^{-1} (I-B)\rVert \\
     &= \lVert (I-B)^{-1} -(I-B)^{-1}B\rVert \\
     &\geq \lVert (I-B)^{-1} \rVert - \lVert (I-B)^{-1} B \rVert \\
     & \geq \lVert (I-B)^{-1} \rVert - \lVert (I-B)^{-1}\rVert \lVert B \rVert \\
    & \geq (1- \lVert B \rVert) \lVert (I-B)^{-1}\rVert . \qedhere
\end{align*}
\end{proof}

\begin{corollary}
    Let $A \in \mathbb{R}^{n \times n}$ be a non-singular matrix, and $\delta A \in \mathbb{R}^{n \times n}$, such that $$\lVert \delta A \rVert \lVert A^{-1} \rVert < 1.$$ Then $A + \delta A$ is non-singular, and it holds that 
\begin{equation*}
    \lVert (A + \delta A)^{-1} \rVert \leq \frac{\lVert A^{-1} \rVert}{1-\lVert A^{-1} \delta A\rVert} \leq \frac{\lVert A^{-1} \rVert}{1-\lVert A^{-1} \rVert \lVert \delta A\rVert} 
\end{equation*}
\label{cor:sens2}
\end{corollary}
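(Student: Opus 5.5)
The plan is to reduce the statement to \cref{lemma:Sens1} by factoring the perturbed matrix as
\[
A+\delta A = A\bigl(I + A^{-1}\delta A\bigr) = A\bigl(I - B\bigr), \qquad B := -A^{-1}\delta A.
\]

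First, I will check the smallness condition. By submultiplicativity of the subordinate matrix norm,
\[
\lVert B\rVert = \lVert A^{-1}\delta A\rVert \le \lVert A^{-1}\rVert\,\lVert \delta A\rVert < 1 .
\]
So the hypothesis of \cref{lemma:Sens1} holds, and $I-B$ is non-singular.

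Second, I obtain invertibility. $A+\delta A$ is the product of two non-singular matrices, $A$ and $I-B$, and is therefore non-singular. Its inverse is
\[
(A+\delta A)^{-1} = (I-B)^{-1}A^{-1}.
\]

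Third, I estimate the norm of the inverse. Submultiplicativity together with \cref{lemma:Sens1} gives
\[
\lVert (A+\delta A)^{-1}\rVert \le \lVert (I-B)^{-1}\rVert\,\lVert A^{-1}\rVert \le \frac{\lVert A^{-1}\rVert}{1-\lVert A^{-1}\delta A\rVert}.
\]
The second stated inequality then follows from $\lVert A^{-1}\delta A\rVert \le \lVert A^{-1}\rVert\,\lVert\delta A\rVert$. Both denominators are positive by the first step, and the map $t\mapsto 1/(1-t)$ is increasing on $[0,1)$.

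There is no real obstacle here. The only point needing care is choosing the factorization $A(I+A^{-1}\delta A)$ rather than $(I+\delta A A^{-1})A$, so that the quantity $\lVert A^{-1}\delta A\rVert$ appearing in the statement is the one that enters the bound.
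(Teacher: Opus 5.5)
Your proof is correct and follows the paper's argument. The paper also factors $A+\delta A = A(I+A^{-1}\delta A)$, invokes \cref{lemma:Sens1} via $\lVert A^{-1}\delta A\rVert \le \lVert A^{-1}\rVert\,\lVert\delta A\rVert < 1$, and reads off the bound; you simply make explicit the inverse $(I-B)^{-1}A^{-1}$ and the submultiplicativity step that the paper leaves implicit.
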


\begin{proof}
Noting that $A + \delta A = A(I+ A^{-1}\delta A)$, the existence of $(A+ \delta A)^{-1}$ follows from \cref{lemma:Sens1}, since 
\begin{equation*}
   \lVert A^{-1}\delta A \rVert \leq \lVert \delta A \rVert \lVert A^{-1} \rVert < 1.
\end{equation*}
Finally, we have 
\begin{equation*}
   \lVert (A+\delta A)^{-1} \rVert \leq \frac{\lVert A^{-1} \rVert}{1-\lVert A^{-1} \delta A \rVert} \leq \frac{\lVert A^{-1} \rVert}{1-\lVert A^{-1} \rVert \lVert\delta A \rVert}. \qedhere
\end{equation*}
\end{proof}
\Cref{cor:sens2} shows that for a non-singular matrix $A$, the perturbed matrix $A+\delta A$ is also non-singular if the perturbation $\delta A$ is sufficiently small.
\begin{theorem}
    Let $A \in \mathbb{R}^{n \times n}$, $\delta A \in \mathbb{R}^{n \times n}$, such that $A$ is non-singular, and assume that $\lVert A^{-1} \rVert  \lVert \delta A\rVert <1 $.  Let $x$  and $x + \delta x$ be the solution of the linear system \eqref{eq:linsys} and the perturbed system \eqref{eq:linsyspert}, respectively, then the following estimation of the relative error holds
\begin{align*}
        \frac{\lVert \delta x\rVert}{\lVert x \rVert}  \leq \frac{\kappa(A) }{1-\kappa(A)  \frac{\lVert \delta A \rVert}{\lVert A \rVert } } \frac{\lVert \delta A \rVert}{\lVert A \rVert} .
\end{align*}
\label{thm:PertubedSystem}
\end{theorem}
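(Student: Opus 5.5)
The plan is to write the perturbed system in a form that isolates $\delta x$, and then bound $\|\delta x\|$ using \cref{cor:sens2}. Subtracting \eqref{eq:linsys} from \eqref{eq:linsyspert} gives $A\,\delta x + \delta A\,(x+\delta x) = 0$. Grouping differently, this reads
\[
(A+\delta A)\,\delta x = -\delta A\, x .
\]
The assumption $\lVert A^{-1}\rVert\lVert\delta A\rVert<1$ is exactly the hypothesis of \cref{cor:sens2}. So $A+\delta A$ is non-singular, the perturbed solution $x+\delta x$ is well defined and unique, and
\[
\delta x = -(A+\delta A)^{-1}\delta A\, x .
\]

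Next I take norms, using that the matrix norm is subordinate, and insert the bound on $\lVert(A+\delta A)^{-1}\rVert$ from \cref{cor:sens2}:
\[
\lVert \delta x\rVert \le \frac{\lVert A^{-1}\rVert\,\lVert\delta A\rVert}{1-\lVert A^{-1}\rVert\,\lVert\delta A\rVert}\,\lVert x\rVert .
\]
If $x=0$, then $b=0$ and also $\delta x=0$, so the statement is trivial (or vacuous as a ratio). Otherwise I divide by $\lVert x\rVert$.

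The last step rewrites the bound in terms of $\kappa(A)=\lVert A\rVert\lVert A^{-1}\rVert$. I write $\lVert A^{-1}\rVert\lVert\delta A\rVert = \kappa(A)\,\lVert\delta A\rVert/\lVert A\rVert$ in both numerator and denominator, which gives exactly the claimed inequality. The denominator stays positive because $\kappa(A)\lVert\delta A\rVert/\lVert A\rVert = \lVert A^{-1}\rVert\lVert\delta A\rVert<1$.

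There is no real obstacle. The only point needing care is to use the form $(A+\delta A)\delta x=-\delta A\,x$ rather than $A\delta x = -\delta A(x+\delta x)$. The latter only gives a bound relative to $\lVert x+\delta x\rVert$, and converting it back to $\lVert x\rVert$ would need an extra absorption argument. The non-singularity provided by \cref{cor:sens2} lets me avoid that.
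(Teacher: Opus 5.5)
Your proof is correct and follows the paper's argument. Both solve for $\delta x = -(A+\delta A)^{-1}\delta A\,x$, bound $\lVert (A+\delta A)^{-1}\rVert$ via \cref{cor:sens2} (the paper phrases this through $(I+A^{-1}\delta A)^{-1}A^{-1}$), and rewrite $\lVert A^{-1}\rVert\lVert\delta A\rVert$ as $\kappa(A)\lVert\delta A\rVert/\lVert A\rVert$; you also treat the case $x=0$ and the positivity of the denominator explicitly, which the paper leaves implicit.
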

\begin{proof}
    The assumption $\lVert A^{-1} \rVert  \lVert \delta A\rVert <1$, ensures that $A + \delta A$ is non-singular according to~\cref{cor:sens2}. Then, solving \eqref{eq:linsyspert} for $\delta x$ leads to 
\begin{align*}
    \delta x &= -(A+ \delta A)^{-1}\delta A x.
\end{align*}
Then, using the same strategy as in the proof of \cref{cor:sens2}, it follows that
\begin{equation*}
    \frac{\lVert \delta x\rVert}{\lVert x \rVert} \leq \lVert (I+A^{-1} \delta A)^{-1} \rVert \lVert A^{-1} \rVert  \lVert \delta A \rVert.
\end{equation*}
\Cref{cor:sens2} leads to
\begin{align*}
        \frac{\lVert \delta x\rVert}{\lVert x \rVert} &\leq \frac{\lVert A^{-1} \rVert}{1-\lVert A^{-1}\rVert \lVert \delta A \rVert } \lVert \delta A \rVert\\
        & \leq \frac{\lVert A^{-1} \rVert \lVert A \rVert  }{1-\lVert A^{-1}\rVert  \lVert A \rVert  \frac{\lVert \delta A \rVert}{\lVert A \rVert } } \frac{\lVert \delta A \rVert}{\lVert A \rVert }.
\end{align*}
\end{proof}
In addition, for sufficiently small perturbations, it holds, 
\begin{equation*}
  \frac{\kappa(A) }{1-\kappa(A)  \frac{\lVert \delta A \rVert}{\lVert A \rVert } } \frac{\lVert \delta A \rVert}{\lVert A \rVert } \approx   \kappa(A)  \frac{\lVert \delta A \rVert}{\lVert A \rVert } + \mathcal{O} \left( \left(   \frac{\lVert \delta A \rVert}{\lVert A \rVert }  \right )^2  \right).
\end{equation*}
Hence, the standard first-order perturbation estimate is given by 
\begin{equation}
        \frac{\lVert \delta x\rVert}{\lVert x \rVert}  \approx \kappa(A) \frac{\lVert \delta A \rVert}{\lVert A \rVert }.
\label{eq:firstorder}
\end{equation}
\end{document}